\documentclass[11pt]{amsart}
\usepackage{amsmath,amssymb,amsthm,mathrsfs,enumerate,bm,xcolor,multirow,pbox}
\usepackage{graphicx,color,framed,tikz,caption,subcaption}
\usepackage{enumitem}
\setlist{leftmargin=5mm}
\allowdisplaybreaks[4]
\numberwithin{equation}{section}
\newcommand{\N}{\mathbb{N}}
\newcommand{\R}{\mathbb{R}}

\newcommand{\E}{\mathbb{E}}
\newcommand{\Prob}{\mathbb{P}}
\newcommand{\G}{\mathbb{G}}

\newcommand{\pnorm}[2]{\lVert#1\rVert_{#2}}

\newcommand{\abs}[1]{\lvert#1\rvert}
\newcommand{\bigabs}[1]{\big\lvert#1\big\rvert}
\newcommand{\biggabs}[1]{\bigg\lvert#1\bigg\rvert}

\renewcommand{\epsilon}{\varepsilon}

\renewcommand{\d}[1]{\mathrm{d}#1}

\newcommand{\floor}[1]{\left\lfloor #1 \right\rfloor}

%\renewcommand{\to}{\longrightarrow}

%\renewcommand{\arraystretch}{1.5}

%% equations
\newcommand{\beq}{\begin{equation}}
\newcommand{\eeq}{\end{equation}}
\newcommand{\beqa}{\begin{equation} \begin{aligned}}
\newcommand{\eeqa}{\end{aligned} \end{equation}}
\newcommand{\beqas}{\begin{equation*} \begin{aligned}}
\newcommand{\eeqas}{\end{aligned} \end{equation*}}

\newcommand{\bit}{\begin{itemize}}
	\newcommand{\eit}{\end{itemize}}
\newcommand{\bmat}{\begin{bmatrix}}
	\newcommand{\emat}{\end{bmatrix}}

\theoremstyle{definition}
\theoremstyle{remark}\newtheorem{assumption}{Assumption}

\theoremstyle{remark}\newtheorem{remark}{Remark}
\theoremstyle{definition}\newtheorem{example}{Example}
\theoremstyle{plain}
\theoremstyle{plain}\newtheorem{theorem}{Theorem}
\theoremstyle{plain}\newtheorem{lemma}{Lemma}
\theoremstyle{plain}\newtheorem{proposition}{Proposition}
\theoremstyle{plain}\newtheorem{corollary}{Corollary}
\theoremstyle{plain}

\AtBeginDocument{%
	\def\MR#1{}
}

\begin{document}

\title[Limit distribution theory]{Limit distribution theory for block estimators in multiple isotonic regression}
\thanks{The research of Q. Han is partially supported by DMS-1916221. The research of C.-H. Zhang is partially supported  by DMS-1513378, DMS-1721495 and IIS-1741390. }

\author[Q. Han]{Qiyang Han}

\address[Q. Han]{
Department of Statistics, Rutgers University, Piscataway, NJ 08854, USA.
}
\email{qh85@stat.rutgers.edu}

\author[C.-H. Zhang]{Cun-Hui Zhang}

\address[C.-H. Zhang]{
Department of Statistics, Rutgers University, Piscataway, NJ 08854, USA.
}
\email{czhang@stat.rutgers.edu}

\date{\today}
\keywords{limit distribution theory, multiple isotonic regression, Gaussian process, shape constraints, adaptation}
\subjclass[2000]{60F17, 62E17}

\begin{abstract}
We study limit distributions for the tuning-free max-min block estimator originally proposed in \cite{fokianos2017integrated} in the problem of multiple isotonic regression, under both fixed lattice design and random design settings. We show that, if the regression function $f_0$ admits vanishing derivatives up to order $\alpha_k$ along the $k$-th dimension ($k=1,\ldots,d$) at a fixed point $x_0 \in (0,1)^d$, and the errors have variance $\sigma^2$, then the max-min block estimator $\hat{f}_n$ satisfies
\begin{align*}
(n_\ast/\sigma^2)^{\frac{1}{2+\sum_{k \in \mathcal{D}_\ast} \alpha_k^{-1}}}\big(\hat{f}_n(x_0)-f_0(x_0)\big)\rightsquigarrow \mathbb{C}(f_0,x_0).
\end{align*}
Here $\mathcal{D}_\ast, n_\ast$, depending on $\{\alpha_k\}$ and the design points, are the set of all `effective dimensions' and the size of `effective samples' that drive the asymptotic limiting distribution, respectively. If furthermore either $\{\alpha_k\}$ are relative primes to each other or all mixed derivatives of $f_0$ of certain critical order vanish at $x_0$, then the limiting distribution can be represented as $\mathbb{C}(f_0,x_0) =_d K(f_0,x_0) \cdot \mathbb{D}_{\bm{\alpha}}$, where $K(f_0,x_0)$ is a constant depending on the local structure of the regression function $f_0$ at $x_0$, and $\mathbb{D}_{\bm{\alpha}}$ is a non-standard limiting distribution generalizing the well-known Chernoff distribution in univariate problems. The above limit theorem is also shown to be optimal both in terms of the local rate of convergence and the dependence on the unknown regression function whenever such dependence is explicit (i.e. $K(f_0,x_0)$), for the full range of $\{\alpha_k\}$ in a local asymptotic minimax sense. 

There are two interesting features in our local theory. First, the max-min block estimator automatically adapts to the local smoothness and the intrinsic dimension of the isotonic regression function at the optimal rate. Second, the optimally adaptive local rates are in general not the same in fixed lattice and random designs. In fact, the local rate in the fixed lattice design case is no slower than that in the random design case, and can be much faster when the local smoothness levels of the isotonic regression function or the sizes of the lattice differ substantially along different dimensions.
\end{abstract}

\maketitle
%\tableofcontents

\section{Introduction}

\subsection{Overview}

Limit distribution theory for shape-restricted estimators is of fundamental importance in the area of statistical inference under shape restrictions. There are two main types of limit distribution theories so far available in the literature.

One line starts from the seminal contribution of \cite{rao1969estimation}, who showed that the limiting distribution of the maximum likelihood estimator (MLE) of a decreasing density on $[0,\infty)$ (known as Grenander estimator) at a fixed point is given by the following: Suppose the true density $f_0$ is decreasing on $[0,\infty)$ and continuously differentiable around $x_0 \in (0,\infty)$ with $f_0'(x_0)<0$. Then the MLE $\hat{f}_n$ satisfies
\begin{align}\label{eqn:limit_dist_monotone}
n^{1/3}\big(\hat{f}_n(x_0)-f_0(x_0)\big)\rightsquigarrow \abs{f'_0(x_0) f_0(x_0)/2}^{1/3} \mathbb{Z}.
\end{align}
Here $\mathbb{Z}$, known as the Chernoff distribution, is the slope at zero of the least concave majorant of the process $t\mapsto \mathbb{B}(t)-t^2$, where $\mathbb{B}$ is the standard Brownian motion starting at $0$. Later on, \cite{groeneboom1989brownian} gives an exact analytic characterization of the limiting Chernoff distribution, whereas \cite{groeneboom1985estimating} suggests the `switching relation' that quickly becomes popular as a powerful proof technique in univariate problems with monotonicity shape restrictions. The limiting Chernoff distribution arises in a number of different problems with univariate monotonicity shape restrictions, e.g., (1) estimation of a regression function \cite{brunk1970estimation,vaneeden1958testing,wright1981asymptotic}, (2) estimation of a monotone failure rate \cite{huang1995estimation,huang1994estimating,rao1970estimation}, (3) estimation in interval censoring models \cite{groeneboom1996lectures,groeneboom1992information}, etc. We refer the reader to the recent survey \cite{durot2018limit} for extensive references in this direction.

Another line of limit theorems for shape restricted estimators is pioneered by \cite{groeneboom2001canonical,groeneboom2001estimation}, who studied limit distribution for the MLE of a convex decreasing density on $[0,\infty)$ and the least squares estimator (LSE) of a convex regression function at a fixed point. In the density setting, if the true density $f_0$ is convex decreasing on $[0,\infty)$ and twice continuously differentiable in a neighborhood of $x_0$ with $f_0''(x_0)>0$, \cite{groeneboom2001estimation} showed that the MLE $\hat{f}_n$ satisfies
\begin{align}\label{eqn:limit_dist_convex}
n^{2/5}\big(\hat{f}_n(x_0)-f_0(x_0)\big)\rightsquigarrow \big(f_0''(x_0) f_0^2(x_0)/24\big)^{1/5} \mathbb{H}''(0),
\end{align}
where $\mathbb{H}$ is a particular upper invelope of an integrated two-sided Brownian motion plus $t^4$, cf. \cite{groeneboom2001canonical}. The process $\mathbb{H}$ appears in several other problems involving univariate convexity shape restrictions, e.g.  for the MLE of a log-concave density on $\R$, cf. \cite{balabdaoui2009limit}, for the MLE of a convex bathtub-shaped hazard function, cf. \cite{jankowski2009nonparametric}, for the R\'enyi-divergence estimators for $s$-concave densities on $\R$, cf. \cite{han2015approximation}, etc. A generalized version of $\mathbb{H}$ appears in \cite{balabdaoui2007estimation} in the context of $k$-monotone density estimation.

Limit theorems of types (\ref{eqn:limit_dist_monotone})-(\ref{eqn:limit_dist_convex}) are not only interesting from a statistical point of view, but are of theoretical value in their own rights. Indeed, these limit theorems and the proof techniques used therein serve as fundamental building blocks for numerous further developments, including likelihood based inferential methods \cite{banerjee2007likelihood,banerjee2001likelihood,doss2016inference,groeneboom2015nonparametric}, bootstrap in non-standard problems \cite{kosorok2008bootstrapping,sen2010inconsistency}, estimation and inference with dependence structures \cite{anevski2006general,anevski2011monotone,bagchi2016inference}, limit theory for global loss functions and functionals \cite{durot2007error,durot2012limit,groeneboom1999asymptotic,jankowski2014convergence,kulikov2005asymptotic}, limit distribution theory for shape-restricted estimators of discrete functions \cite{balabdaoui2017bernoulli,balabdaoui2016maximum,balabdaoui2013asymptotics,jankowski2009estimation}, limit distribution theory for split points in decision trees \cite{banerjee2007confidence,buhlmann2002analyzing}, cube-root asymptotics in more general settings \cite{anevski2011monotone,kim1990cube}, just to name a few.

Despite the wealth of limit distribution theories for univariate shape-restricted problems, much less is known in multi-dimensional settings. The only exception we are aware of is the recent work \cite{anevski2018asymptotic}, in which asymptotic distributions for isotonized estimators are derived in the settings of multi-dimensional discrete isotonic regression and probability mass function estimation. The goal of this paper is to study limit theorems for shape-restricted estimators, with a focus on the problem of multiple isotonic regression in a continuous setting. 

Here is our setup. Consider the regression model 
\begin{align}\label{model}
Y_i = f_0(X_i)+\xi_i,\quad i=1,\ldots,n,
\end{align}
where $X_1,\ldots,X_n$ are design points which can be either fixed or random, and $\xi_1,\ldots,\xi_n$ are random errors. By multiple isotonic regression we assume that the regression function $f_0 \in \mathcal{F}_d$, where $\mathcal{F}_d$ denotes the class of coordinate-wise nondecreasing functions on $[0,1]^d$:
\begin{align*}
\mathcal{F}_d\equiv \{f:[0,1]^d \to \R, f(x)\leq f(y) \textrm{ if }x_i\leq y_i\textrm{ for all }i=1,\ldots,d\}.
\end{align*}

In addition to the aforementioned importance of having a limit distribution theory for shape restricted estimators beyond univariate settings, there is one further consideration for such a theory, related to one distinct attractive feature of shape-constrained models: the MLE/LSE often exists and automatically adapts to certain structures of the underlying truth without the need of any tuning. This automatic adaptation property has attracted a lot of recent attention, mostly from a global perspective. Indeed, adaptation of shape constrained MLEs/LSEs to piecewise simple structures in global metrics is confirmed extensively in various univariate models, cf. \cite{bellec2018sharp,chatterjee2015risk,guntuboyina2013global,kim2016adaptation,zhang2002risk}. Typically, these tuning-free estimators adapt to piecewise constant/linear signals in univariate models with monotonicity/convexity shape constraints. For the multiple isotonic regression model (\ref{model}), global adaptation of the LSE to piecewise constant signals is proved for the bivariate case $d=2$  in \cite{chatterjee2018matrix}, and for the case of general dimensions in \cite{han2017isotonic}. See also \cite{han2019}. Despite these very positive adaptation results, there remain two important drawbacks for considering global adaptation of the natural LSE in the multiple isotonic regression model:
\begin{enumerate}
	\item[(D1)] The isotonic regression function is of global smoothness level $1$ or $\infty$, so the LSE can adapt, if at all possible, to limited global structures. In fact, piecewise constancy is the only known global structure to which the LSE is confirmed to adapt, cf. \cite{chatterjee2018matrix,han2017isotonic}.
	\item[(D2)] The LSE does not adapt at the optimal rate to constant signals when $d\geq 3$: it is shown in \cite{han2017isotonic} that the LSE adapts to the global constant structures at a \emph{strictly sub-minimax rate} $n^{-1/d}$ (up to logarithmic factors) in $L_2$-type losses. 
\end{enumerate}
The reasons for these limitations, however, lie in very different places: the drawback in (D1) is due to the perspective of considering global adaptation, while the drawback in (D2) is due to the use of the LSE.

In view of these limitations, in this paper we consider the local behavior of the following alternative max-min block estimator originally proposed by \cite{fokianos2017integrated}: for any $x_0 \in [0,1]^d$,
\begin{align}\label{def:max_min_estimator}
\hat{f}_n(x_0) &\equiv \max_{x_u\leq x_0} \min_{ \substack{x_v\geq x_0\\ [x_u,x_v]\cap \{X_i\}\neq \emptyset}} \frac{\sum_{{i: x_u\leq X_i\leq x_v}} Y_i}{\abs{ \{i: x_u\leq X_i\leq x_v\}}}\\
&= \max_{x_u\leq x_0} \min_{ \substack{x_v\geq x_0\\ [x_u,x_v]\cap \{X_i\}\neq \emptyset}} \bar{Y}|_{[x_u,x_v]} \nonumber\\
&= \max_{x_u\leq x_0} \min_{ \substack{x_v\geq x_0\\ [x_u,x_v]\cap \{X_i\}\neq \emptyset}} \bar{Y}|_{[x_u,1]\cap [0,x_v]},\nonumber
\end{align}
Here $[x_u,x_v]=\{x\in \R^d: x_u\le x\le x_v\}$, 
$\bar{Y}|_A$ is the average of $\{Y_i: X_i\in A\}$ as in (\ref{def:average}), 
and for any $x, y \in \R^d$, $x\leq y$ if and only if $x_j\leq y_j$ for all $1\leq j\leq d$, and the similar definition applies to $\geq$. It is easy to see that $\hat{f}_n \in \mathcal{F}_d$ and is tuning-free. The computation for (\ref{def:max_min_estimator}) is exact and requires at most $\mathcal{O}(n^2)$ for each design point, so the total computational complexity is at most $\mathcal{O}(n^3)$, independent of the dimension $d$. 

The max-min block estimator (\ref{def:max_min_estimator}) above is closely related to the LSE studied in \cite{han2017isotonic}, in the sense that the LSE also admits a max-min representation \cite{robertson1988order}, but with the rectangles $[x_u,1], [0,x_v]$ replaced by all \emph{upper sets} and \emph{lower sets} containing $x_0$. Since the upper and lower sets reduce to intervals in dimension one, (\ref{def:max_min_estimator}) coincides with the standard univariate isotonic LSE in $d=1$. The representation through upper and lower sets is also observed in a related monotone density estimation problem in $d=2$, cf. \cite{polonik1998silhouette}.

The max-min representation gives one heuristic explanation for the difficulty of the LSE in the sense of
(D2): the class of upper and lower sets is too large for the partial sum process to remain tight in the large sample limit as soon as $d\geq 3$ (cf, \cite{dudley1999uniform}). On the other hand, using the smaller class of rectangles as in (\ref{def:max_min_estimator}), it is shown in \cite{deng2018isotonic} that (\ref{def:max_min_estimator}) does adapt to constant signals at a nearly optimal parametric rate in all dimensions, as opposed to the slower rate $n^{-1/d}$ for the LSE, cf. (D2). For the same reason, it is hard to expect a limiting distribution theory for the LSE.

The main contribution of this paper is to develop a limit distribution theory for the max-min block estimator (\ref{def:max_min_estimator}). We show that, the limiting distribution of $\hat{f}_n$, depending on the local structure of $f_0$ at $x_0$, takes the following general form: Suppose $f_0$ admits vanishing derivatives up to order $\alpha_k$ along the $k$-th dimension ($k=1,\ldots,d$) at a fixed point $x_0 \in (0,1)^d$, and the errors $\{\xi_i\}$ have variance $\sigma^2$. Then
\begin{align}\label{eqn:limit_distribution}
(n_\ast/\sigma^2)^{\frac{1}{2+\sum_{k \in \mathcal{D}_\ast} \alpha_k^{-1}}}\big(\hat{f}_n(x_0)-f_0(x_0)\big)\rightsquigarrow \mathbb{C}(f_0,x_0).
\end{align}
Here $\mathcal{D}_\ast$ and $n_\ast$, determined by the value of $(\alpha_1,\ldots,\alpha_d)$ and the design of $\{X_i\}$, are the set of all `effective dimensions' and the size of `effective samples' that drive the asymptotic limiting distribution, the exact meaning of which will be clarified in Section \ref{section:limit_distribution}. The dependence of the limiting distribution $\mathbb{C}(f_0,x_0)$ on the local properties of $f_0$ at $x_0$ cannot be in general expressed by a simple factor, due to possible existence of non-zero mixed derivatives of critical order $\bm{j}=(j_1,\ldots,j_d)$ satisfying $\sum_{k=1}^d j_k/\alpha_k=1$ and $\pnorm{\bm{j}}{0}>1$. However, in situations where $\{\alpha_k\}$ are relative primes to each other (so that any such index vector $\bm{j}$ must have $\pnorm{\bm{j}}{0}=1$), or all mixed derivatives of $f_0$ of the critical order vanish at $x_0$, the limiting distribution $\mathbb{C}(f_0,x_0)$ can be represented in a similar form as in (\ref{eqn:limit_dist_monotone})-(\ref{eqn:limit_dist_convex}), namely
\begin{align*}
\mathbb{C}(f_0,x_0) =_d K(f_0,x_0)\cdot \mathbb{D}_{\bm{\alpha}}.
\end{align*}
Here $K(f_0,x_0)$ is a constant depending on the local structure of the regression function $f_0$ at $x_0$ to be specified in Section \ref{section:limit_distribution}, and $\mathbb{D}_{\bm{\alpha}}$ is the non-standard limiting distribution playing the similar role as the Chernoff distribution $\mathbb{Z}$ in univariate problems. 

One important and canonical setting for (\ref{eqn:limit_distribution}) is the following: Suppose (i) $f_0$ depends only through its first $s$ coordinates ($0\leq s\leq d$), and all non-trivial first-order partial derivatives of $f_0$ are non-vanishing at $x_0$: $\partial_k f_0(x_0)>0,1\leq k\leq s$, and (ii) the design points $\{X_i\}$ are either of a balanced fixed lattice design (see Section \ref{section:limit_distribution} for a precise definition) or a random design with uniform distribution on $[0,1]^d$. In this setting,  (\ref{eqn:limit_distribution}) reduces to:
\begin{align*}
(n/\sigma^2)^{\frac{1}{2+s}}\big(\hat{f}_n(x_0)-f_0(x_0)\big)\rightsquigarrow \bigg\{\prod_{k=1}^s \big(\partial_k f_0(x_0)/2\big)\bigg\}^{\frac{1}{2+s}}\cdot \mathbb{D}_{(\underbrace{1,\ldots,1}_{s\textrm{ many }1\textrm{'s}},\infty,\ldots,\infty)}.
\end{align*}
When $s=d=1$, we recover the familiar limit distribution theory for univariate isotonic least squares estimator.

The limit theory in (\ref{eqn:limit_distribution}), as we will see in Section \ref{section:limit_distribution}, implies that \emph{the max-min block estimator (\ref{def:max_min_estimator}) automatically adapts to the local smoothness structures and the intrinsic dimension of $f_0$}. The local adaptation is in similar spirit to \cite{balabdaoui2009limit,chen2014convex,wright1981asymptotic}, who showed that univariate shape-restricted MLEs/LSEs adapt to local smoothness of the truth. It should be emphasized here that local smoothness to which adaptation occurs specifically refers to the number of \emph{vanishing} (partial) derivatives. A distinct feature for the max-min block estimator (\ref{def:max_min_estimator}) here is that both (i) the local rate of convergence, i.e. $n_\ast^{\frac{1}{2+\sum_{k \in \mathcal{D}_\ast} \alpha_k^{-1}}}$, and (ii) the dependence on $\{f_0,x_0\}$ whenever explicit, i.e. the constant $K(f_0,x_0)$ in the limit distribution, are \emph{optimal in a local asymptotic minimax sense for all possible local smoothness levels}. So in this sense the limit distribution theory for the max-min block estimator (\ref{def:max_min_estimator}) in the form of (\ref{eqn:limit_distribution}) is the best one can hope for in the problem of multiple isotonic regression.

Another interesting consequence of (\ref{eqn:limit_distribution}) and its local asymptotic minimaxity is that \emph{the optimal local rates of convergence are in general not the same in fixed lattice and random designs}. In fact, the local rate in the fixed lattice design case is \emph{no slower} than that in the random design case, and can be much faster when (a) the local smoothness levels of the isotonic regression function, or (b) the sizes of the lattice, differ substantially along different dimensions. The reason for the discrepancy in the local rates can be attributed to the fact that significant imbalance in (a) or (b) screens out dimensions with `low regularity'  that do not contribute to the asymptotics in the fixed lattice design case. Here dimensions with `low regularity', loosely speaking, refer to those with low smoothness levels in (a), and to those with sparsely spaced design points in (b).

The proof of the limit theory (\ref{eqn:limit_distribution}) in general dimensions is significantly more challenging than its univariate counterpart. Indeed, thanks to the `switching relation' put forward by \cite{groeneboom1985estimating}, it is now well understood that limiting distributions for various univariate monotonicity shape constrained estimators can be obtained via the argmax continuous mapping theory, upon a proper \emph{one-sided} localization (typically) on the order of a cubic-root rate (cf. \cite{van1996weak}). In contrast, in the multiple isotonic regression problem we consider here, the key step in the proof is a \emph{two-sided} localization technique, where the stochastic orders of the length of all sides of the rectangle over which the max-min block estimator (\ref{def:max_min_estimator}) takes average, need to be estimated sharply \emph{both from above and below}. These estimates bring about substantial technical challenges as opposed to univariate problems.

Finally we mention the work of \cite{deng2018isotonic}, in which global risk bounds in $L_q$ norms for the max-min block estimator (\ref{def:max_min_estimator}) are thoroughly studied. Risk bounds in global metrics, as already mentioned in (D1), have a limited scope of structures for adaptation due to the strict global smoothness of the isotonic functions. Our local limit distribution theory (\ref{eqn:limit_distribution}) can therefore also be viewed as a further step in understanding the adaptive behavior of the max-min block estimator (\ref{def:max_min_estimator}) to a rich class of structures that are exhibited only through local properties of the isotonic regression function.

The rest of the paper is organized as follows. In Section \ref{section:limit_distribution}, we present the limit distribution theory (\ref{eqn:limit_distribution}) for the max-min block estimator (\ref{def:max_min_estimator}), and discuss its many implications. In Section \ref{section:local_minimax}, we establish a local asymptotic minimax lower bound, showing the information-theoretic optimality of the limit theorem (\ref{eqn:limit_distribution}). Due to the highly technical nature of the proofs, Section \ref{section:proof_outline} is devoted to an outline of the main ideas in the proofs. Section \ref{section:discussion} concludes the paper with a brief discussion. All the proof details are presented in Sections \ref{section:proof_limit_dist}-\ref{section:proof_auxiliary}.

\subsection{Notation}\label{section:notation}

For a real-valued measurable function $f$ defined on $(\mathcal{X},\mathcal{A},P)$, $\pnorm{f}{L_p(P)}\equiv \pnorm{f}{P,p}\equiv \big(P\abs{f}^p)^{1/p}$ denotes the usual $L_p$-norm under $P$, and $\pnorm{f}{\infty}\equiv \sup_{x \in \mathcal{X}} \abs{f(x)}$. Let $(\mathcal{F},\pnorm{\cdot}{})$ be a subset of the normed space of real functions $f:\mathcal{X}\to \R$. For $\epsilon>0$ let $\mathcal{N}(\epsilon,\mathcal{F},\pnorm{\cdot}{})$ be the $\epsilon$-covering number of $\mathcal{F}$; see page 83 of \cite{van1996weak} for more details.

For the regression model (\ref{model}), for any $A \subset [0,1]^d$, define
\begin{align}\label{def:average}
\bar{Y}|_A \equiv \frac{1}{n_A }\sum_{i: X_i \in A} Y_i, \bar{f_0}|_A \equiv \frac{1}{ n_A }\sum_{i: X_i \in A} f_0(X_i), \bar{\xi}|_A \equiv \frac{1}{ n_A }\sum_{i: X_i \in A} \xi_i
\end{align}
where $n_A\equiv \abs{\{i: X_i \in A\}}$.

For two real numbers $a,b$, $a\vee b\equiv \max\{a,b\}$ and $a\wedge b\equiv\min\{a,b\}$. For $x \in \R^d$, let $\pnorm{x}{p}$ denote its $p$-norm $(0\leq p\leq \infty)$. For any $x, y \in \R^d$, let $[x,y] \equiv \prod_{k=1}^d [x_k\wedge y_k,x_k\vee y_k]$, $xy\equiv (x_ky_k)_{k=1}^d$, and $x\wedge (\vee) y \equiv (x_k \wedge (\vee) y_k)_{k=1}^d$. For $\ell_1, \ell_2 \in \{1,\ldots,d\}$, we let $\bm{1}_{[\ell_1:\ell_2]} \in \R^d$ be such that $(\bm{1}_{[\ell_1:\ell_2]})_k = \bm{1}_{\ell_1\leq k\leq \ell_2}$, and  $\bm{1}\equiv \bm{1}_{[1:d]}$ for simplicity. $C_{x}$ will denote a generic finite constant that depends only on a generic quantity $x$, whose numeric value may change from line to line unless otherwise specified. $a\lesssim_{x} b$ and $a\gtrsim_x b$ mean $a\leq C_x b$ and $a\geq C_x b$ respectively, and $a\asymp_x b$ means $a\lesssim_{x} b$ and $a\gtrsim_x b$ [$a\lesssim b$ means $a\leq Cb$ for some absolute constant $C$]. $\mathcal{O}_{\mathbf{P}}$ and $\mathfrak{o}_{\mathbf{P}}$ denote the usual big and small O notation in probability. $\rightsquigarrow$ is reserved for weak convergence. For two integers $k_1>k_2$, we interpret $\sum_{k=k_1}^{k_2}\equiv 0, \prod_{k=k_1}^{k_2}\equiv 1$. We also interpret $(\infty)^{-1}\equiv 0, 0/0\equiv 0$. 

For $f: \R^d \to \R$, and $k \in \{1,\ldots,d\}$, $\alpha_k \in \mathbb{Z}_{\geq 1}$, let $\partial_k^{\alpha_k} f(x)\equiv \frac{d^{\alpha_k}}{d x_k^{\alpha_k}}f(x)$. For a multi-index $\bm{j}=(j_1,\ldots,j_d) \in \mathbb{Z}_{\geq 0}^d$, let $\partial^{\bm{j}} \equiv \partial_{1}^{j_1}\cdots \partial_d^{j_d}$, and $\bm{j}! \equiv j_1!\cdots j_d!$ and $x^{\bm{j}} \equiv x_1^{j_1}\ldots x_d^{j_d}$ for $x \in \R^d$. For $\bm{\alpha}=(\alpha_1,\ldots,\alpha_d) \in \mathbb{Z}_{\geq 1}^d$ in Assumption \ref{assump:smoothness} below, i.e. for some $0\leq s\leq d$, $1\leq \alpha_1,\ldots,\alpha_s<\infty = \alpha_{s+1}=\ldots=\alpha_d$, let $J(\bm{\alpha})$ (resp. $J_\ast(\bm{\alpha})$) be the set of all $\bm{j}=(j_1,\ldots,j_d) \in \mathbb{Z}_{\geq 0}^d$ satisfying $0<\sum_{k=1}^s j_k/\alpha_k \leq 1$ (resp. $\sum_{k=1}^s j_k/\alpha_k = 1$) and $j_k = 0$ for $s+1\leq k\leq d$, and let $J_0(\bm{\alpha})\equiv J(\bm{\alpha})\cup \{\bm{0}\}$. We often write $J=J(\bm{\alpha})$, $J_\ast = J_\ast(\bm{\alpha})$ and $J_0 = J_0(\bm{\alpha})$ if no confusion arises. The set $J,J_\ast$ will play a crucial role below in determining $\bm{j}$'s for which $\partial^{\bm{j}}f_0(x_0)$ can be non-zero under Assumption \ref{assump:smoothness}; cf. Lemma \ref{lem:mixed_derivative_vanish}.

\section{Limit distribution theory}\label{section:limit_distribution}

\subsection{Assumptions}
We first state the assumptions on the local smoothness of $f_0$ at the point of interest $x_0 \in (0,1)^d$ and the intrinsic dimension of $f_0$.

\begin{assumption}\label{assump:smoothness}
$f_0$ is coordinate-wise nondecreasing (i.e. $f_0 \in \mathcal{F}_d$), and is $\bm{\alpha}$-smooth at $x_0$ with intrinsic dimension $s$, $\bm{\alpha}=(\alpha_1,\ldots,\alpha_d)$ with integers $1\leq \alpha_1,\ldots,\alpha_s<\infty = \alpha_{s+1}=\ldots=\alpha_d$, $0\leq s\leq d$, in the sense that $\partial_k^{j_k} f_0(x_0)=0$ for $1\leq j_k\leq \alpha_k-1$ and $\partial_k^{\alpha_k} f_0(x_0)\neq 0$, $1\leq k\leq s$, and in rectangles of the form $\cap_{k=1}^d \{\abs{(x-x_0)_k}\leq L_0\cdot(r_n)_k\}$, $r_n = (\omega_n^{1/\alpha_1},\ldots,\omega_n^{1/\alpha_d})$ with $\omega_n>0$, the Taylor expansion of $f_0$ satisfies for all $L_0>0$,
	\begin{align*}
	\lim_{\omega_n \searrow 0} \omega_n^{-1} \sup_{ \substack{x \in[0,1]^d,\\ \abs{(x-x_0)_k}\leq L_0 \cdot (r_n)_k, \\1\leq k\leq d} } \biggabs{f_0(x)- \sum_{\bm{j} \in J_0} \frac{\partial^{\bm{j}}f_0(x_0)}{\bm{j}!}(x-x_0)^{\bm{j}} }= 0.
	\end{align*}
\end{assumption}
Assumption \ref{assump:smoothness} concerns the local smoothness of $f_0$ at a fixed point $x_0$, allowing for potentially different local smoothness levels along different coordinates $\{1,\ldots,s\}$. The Taylor expansion, which includes all terms of order $\omega_n$ or larger in a small hyper-rectangle, 
interestingly features different rates $\omega_n^{1/\alpha_k}$ in different dimensions in the $x$-domain. 
This is quite different from the Taylor expansion in Euclidean balls which includes all terms with 
$\|\bm{j}\|_1\le\alpha$ for a certain smoothness index $\alpha$.  
Our expansion has the prescribed convergence rate if $f_0$ is locally $C^{\max_{1\leq k\leq s} \alpha_k}$ at $x_0$ and depends only through its first $s$ coordinates. Note that Assumption \ref{assump:smoothness} is interesting mostly from a local perspective. Indeed, if this condition holds for all $x_0 \in (0,1)^d$ with some $1\leq \alpha_1, \ldots, \alpha_s<\infty$, then we must have $\alpha_1=\ldots=\alpha_s=1$.

Now we consider a few examples that satisfy Assumption \ref{assump:smoothness} with different values of $\bm{\alpha}$'s. In the following examples we consider $d=2$ and $x_0 = (1/2,1/2)$ unless otherwise specified.
\begin{example}
Let $f_0^{(1)}(x_1,x_2) = x_1+x_2$. Then $\alpha_1=\alpha_2=1$.
\end{example}
\begin{example}
Let $f_0^{(2)}(x_1,x_2)=x_1$. Then $s=1$ with $\alpha_1=1,\alpha_2=\infty$.
\end{example}
\begin{example}
Let $f_0^{(3)}(x_1,x_2)= (x_1+x_2)\bm{1}_{0\leq x_1\leq 1/4}+8x_1\cdot\bm{1}_{1/4<x_1<3/4}+8(x_1+x_2)\bm{1}_{3/4\leq x_1\leq 1}$. Then $s=1$ with $\alpha_1=1,\alpha_2=\infty$ for $x_0 \in (1/4,3/4)\times (0,1)$, and $s=2$ with $\alpha_1=\alpha_2=1$ for $x_0 \in (0,1/4)\times (0,1) \cup (3/4,1) \times  (0,1)$.
\end{example}
Example 2 is a canonical example for which the regression function is globally of intrinsic dimension 1, while in Example 3 the function can be locally of intrinsic dimension 1 in the strip $(1/4,3/4)\times (0,1)$.
\begin{example}
Let $f_0^{(4)}(x_1,x_2) = (x_1-1/2)^3+(x_2-1/2)^3$. Then $\alpha_1 = 3,\alpha_2 =3$.
\end{example}
\begin{example}
Let $f_0^{(5)}(x_1,x_2) =(x_1-1/2)^3+(x_1-1/2)^2(x_2-1/2)+(x_1-1/2)(x_2-1/2)^2+(x_2-1/2)^3$. Then $\alpha_1=3,\alpha_2=3$.
\end{example}
Example 4 and Example 5 both share the same local smoothness level $\bm{\alpha}=(3,3)$, but are quite different in that for $f_0^{(4)}$ all mixed derivatives vanish, while for $f_0^{(5)}$ certain mixed derivatives do no vanish: $\partial^{\bm{j}} f_0^{(5)}(x_0)\neq 0$ for $\bm{j} \in \{(1,2),(2,1)\}$.

\begin{lemma}\label{lem:mixed_derivative_vanish}
The following statements hold:
\begin{enumerate}
	\item Suppose Assumption \ref{assump:smoothness} holds. $\alpha_k$ must be odd and $\partial_k^{\alpha_k} f_0(x_0)>0$ for $1\leq k\leq s$;
	\item Suppose Assumption \ref{assump:smoothness} holds. Any mixed derivative of the form $\partial^{\bm{j}}f_0(x_0)$, $\bm{0}\neq \bm{j} \in J\setminus J_\ast$, vanishes at $x_0$, and thus for some $\epsilon_1>0$, $L_1>0$ depending only on $f_0,x_0$, 
	\begin{align*}
	\sup_{0<\omega_n\leq \epsilon_1}\sup_{ \substack{x \in[0,1]^d,\\ \abs{(x-x_0)_k}\leq (r_n)_k, \\1\leq k\leq d}} \omega_n^{-1} \abs{f_0(x)-f_0(x_0)}\leq L_1;
	\end{align*}
	\item Let $J_1 \equiv \{\bm{j} \in J_\ast: \pnorm{\bm{j}}{0}>1 \}$. Then $J_1=\emptyset$ if and only if $\abs{J_\ast}=s$, if and only if $\{\alpha_k\}_{k=1}^s$ is a set of relative primes, i.e. the greatest common divisor of $\{\alpha_{k_1},\alpha_{k_2}\}$ is 1 for all $1\leq k_1<k_2\leq s$;
	\item When $\bm{\alpha}$ is such that $J_1 \neq \emptyset$, there exists some $f \in \mathcal{F}_d$ for which $f$ satisfies Assumption \ref{assump:smoothness} with $\bm{\alpha}$, but $\partial^{\bm{j}}f(x_0)\neq 0$ for some $\bm{j} \in J_1$.
\end{enumerate}
\end{lemma}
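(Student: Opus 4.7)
The plan is to prove the four claims in sequence, leveraging the interplay between the anisotropic Taylor expansion of Assumption \ref{assump:smoothness} and the coordinate-wise monotonicity of $f_0$. For part (1), I would restrict to the axial line $t \mapsto f_0(x_0 + t e_k)$ for $1 \le k \le s$; on this line $(x-x_0)^{\bm{j}}$ vanishes unless $\bm{j}$ is supported on $\{k\}$, and the pure partials of lower order vanish by hypothesis, so the Taylor expansion collapses to
\[
f_0(x_0 + t e_k) - f_0(x_0) = \frac{\partial_k^{\alpha_k} f_0(x_0)}{\alpha_k!}\, t^{\alpha_k} + o(\omega_n),
\]
uniformly for $|t| \le L_0 \omega_n^{1/\alpha_k}$. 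Taking $t = \pm L_0 \omega_n^{1/\alpha_k}$ and invoking the non-decreasingness of $f_0$ in $x_k$, the sign of $t^{\alpha_k}\partial_k^{\alpha_k} f_0(x_0)$ must asymptotically match the sign of $t$, forcing $\alpha_k$ odd and $\partial_k^{\alpha_k} f_0(x_0) > 0$.

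For part (2), I would argue by strong induction on the values of $\beta_{\bm{j}} := \sum_k j_k/\alpha_k$ in $(0, 1)$. At level $\beta_\ast < 1$, assume all $\partial^{\bm{j}} f_0(x_0)$ with $\bm{j} \in J$, $\bm{j} \ne \bm{0}$, and $\beta_{\bm{j}} < \beta_\ast$ already vanish (pure partials vanish by hypothesis). Along the rescaled path $x(\bm{c}) = x_0 + \sum_k c_k \omega_n^{1/\alpha_k} e_k$ with $\bm{c}$ bounded, the Taylor expansion yields
\[
\omega_n^{-\beta_\ast}\,\big(f_0(x(\bm{c})) - f_0(x_0)\big) \longrightarrow P_{\beta_\ast}(\bm{c}) := \sum_{\bm{j} \in J,\ \beta_{\bm{j}} = \beta_\ast} \frac{\partial^{\bm{j}} f_0(x_0)}{\bm{j}!}\, \bm{c}^{\bm{j}}
\]
pointwise as $\omega_n \to 0$, since lower-$\beta$ terms vanish by the induction hypothesis and higher-$\beta$ terms contribute $\omega_n^{\beta_{\bm{j}} - \beta_\ast} \to 0$. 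Each pre-limit function is coordinate-wise non-decreasing in $\bm{c}$, hence so is $P_{\beta_\ast}$, and every monomial of $P_{\beta_\ast}$ has support of size at least two (the pure-partial monomials vanish by hypothesis). I would then invoke a polynomial lemma proved by induction on $s$: any coordinate-wise non-decreasing polynomial on $\R^s$ with no monomial of support size at most $1$ is identically zero. The base case $s = 1$ is vacuous. For $s \ge 2$, the restriction $P|_{c_k = 0}$ is a polynomial in $s-1$ variables satisfying the same hypothesis, so by the induction hypothesis it vanishes; hence $c_k \mid P$ for each $k$, and monotonicity combined with $P|_{c_k = 0} = 0$ forces $P$ to have the same sign as each $c_k$; consequently $P$ vanishes on every orthant containing two coordinates of opposite sign, a non-empty open subset of $\R^s$, so $P \equiv 0$. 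Applied to $P_{\beta_\ast}$, this kills all mixed derivatives at level $\beta_\ast$ and closes the induction. The uniform bound then follows from the resulting truncated expansion in which only $J_\ast \cup \{\bm{0}\}$ terms survive, each of order $\omega_n$ in the rectangle.

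For part (3), $J_\ast$ always contains the $s$ axial indices $\{\alpha_k e_k\}_{k=1}^s$, so $|J_\ast| = s$ iff $J_1 = \emptyset$. For the equivalence with pairwise coprimality: if $d := \gcd(\alpha_{k_1}, \alpha_{k_2}) \ge 2$ for some pair, writing $\alpha_{k_1} = da$, $\alpha_{k_2} = db$ with $\gcd(a, b) = 1$, the index $\bm{j} = a\, e_{k_1} + (d-1)b\, e_{k_2}$ satisfies $\sum j_k/\alpha_k = 1/d + (d-1)/d = 1$ and lies in $J_1$. Conversely, under pairwise coprimality, multiplying $\sum j_k/\alpha_k = 1$ by $\prod \alpha_k$ and reducing modulo each $\alpha_k$ gives $j_k \prod_{k' \ne k} \alpha_{k'} \equiv 0 \pmod{\alpha_k}$; since $\prod_{k' \ne k} \alpha_{k'}$ is a unit modulo $\alpha_k$, we obtain $\alpha_k \mid j_k$, so $j_k \in \{0, \alpha_k\}$, and $\sum j_k/\alpha_k = 1$ forces exactly one $j_k = \alpha_k$, giving an axial $\bm{j}$.

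For part (4), given $J_1 \ne \emptyset$, by part (3) I pick $k_1 \ne k_2$ with $d = \gcd(\alpha_{k_1}, \alpha_{k_2}) \ge 2$ and $\alpha_{k_1} = da$, $\alpha_{k_2} = db$, and define
\[
f(x) = \Big( (x_{k_1} - (x_0)_{k_1})^a + (x_{k_2} - (x_0)_{k_2})^b \Big)^{d} + \sum_{\substack{1 \le k \le s \\ k \ne k_1, k_2}} (x_k - (x_0)_k)^{\alpha_k}.
\]
By part (1) each $\alpha_k$ is odd, so $d$, $a$, $b$ are odd and $a-1, b-1, d-1$ are all even, making $\partial_{k_1} f = da\,(x_{k_1}-(x_0)_{k_1})^{a-1}\,((x_{k_1}-(x_0)_{k_1})^a + (x_{k_2}-(x_0)_{k_2})^b)^{d-1} \ge 0$ (and analogously for $\partial_{k_2} f$ and the remaining partials $\alpha_k (x_k - (x_0)_k)^{\alpha_k - 1}$), so $f \in \mathcal{F}_d$. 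Binomial expansion shows $\partial^{\bm{j}} f(x_0) \ne 0$ precisely at $\bm{j} = (m\alpha_{k_1}/d)\, e_{k_1} + ((d-m)\alpha_{k_2}/d)\, e_{k_2}$ for $m = 0, \ldots, d$ and at the axial $\alpha_k e_k$ for $k \ne k_1, k_2$, all lying in $J_\ast$; the intermediate $m \in \{1, \ldots, d-1\}$ give elements of $J_1$. Since $f$ is a polynomial the Taylor expansion is exact, so Assumption \ref{assump:smoothness} holds trivially. The principal technical obstacle will be the polynomial-monotonicity lemma in part (2), where the inductive factorization combined with the orthant sign analysis does the heavy lifting; parts (1), (3), (4) are comparatively routine.
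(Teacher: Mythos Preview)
Your proposal is correct. Parts (1) and (3) are essentially identical to the paper's arguments, and the minor logical slip in part (4)---invoking part (1) to conclude the $\alpha_k$ are odd when part (1) is stated for a given $f_0$---is harmless since the claim is vacuous otherwise.

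Parts (2) and (4) differ from the paper in organization. For (2), the paper does not induct over the levels $\beta_\ast$; instead it assumes some derivative in $J\setminus J_\ast$ is nonzero, extracts the single minimal level $\beta_{\min}<1$ among nonzero derivatives, and rescales by $\omega_n D_n$ with $D_n=\omega_n^{\beta_{\min}-1}$ to obtain a nondecreasing polynomial $Q$. It then runs a coordinate-by-coordinate argument: writing $Q(0,\ldots,0,t_k,\ldots,t_s)=\sum_{j\ge \alpha_k'} Q_j(t_{k+1},\ldots,t_s)\,t_k^{j}$ and using $\partial_\ell Q\ge 0$ together with a further rescaling in $t_k$, it shows $Q_{\alpha_k'}$ is constant and nonzero, forcing a pure partial $\partial_k^{\alpha_k'}f_0(x_0)\neq 0$ with $\alpha_k'<\alpha_k$, a contradiction. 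Your route replaces this with a self-contained polynomial lemma (a coordinate-wise nondecreasing polynomial with no monomial of support size $\le 1$ vanishes), proved by a clean divisibility-plus-orthant-sign argument; this is arguably more transparent and avoids the nested rescaling, at the cost of iterating over all levels rather than one. For (4), the paper keeps only the four extremal terms $y_1^m+y_1^{m-1}y_2+y_1y_2^{m-1}+y_2^m$ (with $y_i=t_i^{m_i}$) and verifies monotonicity via an AM--GM bound on the cross term; your choice of the full power $(t_{k_1}^a+t_{k_2}^b)^d$ makes monotonicity immediate since $d-1$, $a-1$, $b-1$ are all even, and the added pure terms in the remaining coordinates dispense with the paper's ``$d=2$ without loss of generality''.
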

\begin{proof}
See Section \ref{section:proof_lemmas}.
\end{proof}

Lemma \ref{lem:mixed_derivative_vanish} reveals an important  and unique feature of multiple isotonic functions compared with smooth functions: If $f_0$ satisfies the `marginal smoothness' Assumption \ref{assump:smoothness} with $\bm{\alpha}=(\alpha_1,\ldots,\alpha_d)$ at $x_0$, then the only possible non-zero mixed derivatives $\partial^{\bm{j}}f_0(x_0)$ in the Taylor expansion must have critical order $\bm{j} \in J_\ast$ satisfying $\sum_{k=1}^s j_k/\alpha_k =1$. Such possible non-zero mixed derivatives cannot be ruled out under Assumption \ref{assump:smoothness} as soon as certain pair of $\{\alpha_k\}$ has a non-trivial common divisor. The importance of such a feature lies in the fact that these mixed derivatives $\{\partial^{\bm{j}}f_0(x_0): \bm{j} \in J_\ast\}$ contribute to the convergence rate of the same order as the marginal derivatives $\{\partial_k^{\alpha_k} f_0(x_0)\}$, in rectangles of the form $\cap_{k=1}^d \{\abs{(x-x_0)_k}\leq L_0\cdot(r_n)_k\}$, $r_n = (\omega_n^{1/\alpha_1},\ldots,\omega_n^{1/\alpha_d})$ with $\omega_n\searrow 0$. Hence adaptation of the max-min estimator (\ref{def:max_min_estimator}) to marginal smoothness levels---which only uses marginal information in rectangles---becomes possible. 

Next we state the assumptions on the design of the covariates.

\begin{assumption}\label{assump:design}
The design points $\{X_i\}_{i=1}^n$ satisfy either of the following:
\begin{itemize}
	\item (\emph{Fixed design}) $\{X_i\}$'s follow a $\bm{\beta}$-fixed lattice design: there exist some $\{\beta_1,\ldots,\beta_d\}\subset (0,1)$ with $\sum_{k=1}^d \beta_k=1$ such that $x_0 \in \{X_i\}_{i=1}^n = \prod_{k=1}^d \{x_{1,k},\ldots,x_{n_k,k}\}$, where $\{x_{1,k},\ldots,x_{n_k,k}\}$ are equally spaced in $[0,1]$ (i.e. $\abs{x_{j,k}-x_{j+1,k}}=1/n_k$ for all $j=1,\ldots,n_k-1$) and $n_k = \floor{n^{\beta_k}}$. 
	\item (\emph{Random design}) $\{X_i\}$'s follow i.i.d. random design with law $P$ independent of $\{\xi_i\}$'s. The Lebesgue density $\pi$ of $P$ is bounded away from $0$ and $\infty$ on $[0,1]^d$ and is continuous over an open set containing the region $\big\{\big((x_0)_1,\ldots,(x_0)_s,x_{s+1},\ldots,x_d\big): 0\leq x_k\leq 1, s+1\leq k\leq d\big\}$.
\end{itemize}
\end{assumption}

In the fixed lattice design case, we use $\beta_k$ to control the size of the lattice in dimension $k$. A balanced fixed lattice design refers to the special case with $\beta_k=1/d$ for all $k=1,\ldots,d$. In the random design case, the continuity of the density $\pi$ is imposed over the region where asymptotics take place.

We choose without loss of generality the index $\{1,\ldots,d\}$ such that
\begin{align}\label{def:ordering}
0\leq \alpha_1 \beta_1 \leq \ldots \leq \alpha_s\beta_s\leq \ldots \leq \alpha_d \beta_d \leq \infty.
\end{align}
This requirement facilitates the statement of our main Theorem \ref{thm:limit_distribution_pointwise} below. Otherwise we may find some permutation $\tau$ of $\{1,\ldots,d\}$ for which $\alpha_{\tau(1)}\beta_{\tau(1)}\leq \ldots \leq \alpha_{\tau(d)}\beta_{\tau(d)}$, and consider the coordinate-wise nondecreasing function $\tilde{f}_0(x_1,\ldots,\ldots,x_d)\equiv f_0(x_{\tau(1)},\ldots,x_{\tau(d)})$. Such a reparametrization is compatible with Assumption \ref{assump:smoothness} since $\alpha_k\beta_k=\infty$ if and only if $\alpha_k = \infty$.

\subsection{Limit distribution theory}
Let $x_0 \in (0,1)^d$. Let $\pi(x)\equiv 1$ in the $\bm{\beta}$-fixed lattice design case and $\pi(x)\equiv \d{P}/(\d{x_1}\cdots \d{x_d})$ be the Lebesgue density of $P$ in the random design case. For any $0\leq s\leq d$, and $h_1,h_2 \in \R_{\geq 0}^d$ such that $(h_1)_k\leq (x_0)_k, (h_2)_k\leq (1-x_0)_k$ for all $s+1\leq k\leq d$, let
\begin{align*}
&\mathcal{I}^{[s+1:d]}_\pi (h_1,h_2)\\
&\equiv \int_{ \substack{(x_0-h_1)_k\leq x_k \leq (x_0+h_2)_k\\ s+1\leq k\leq d}} \pi\big((x_0)_1,\ldots,(x_0)_s,x_{s+1},\ldots,x_d\big)\ \d{x_{s+1}}\cdots\d{x_d},
\end{align*}
and $\mathcal{I}^{[d+1:d]}_\pi \equiv \pi(x_0)$. The integration above is carried out over the region $\{(x_0-h_1)_k\leq x_k\leq (x_0+h_2)_k: s+1\leq k\leq d\} \subset [0,1]^{d-s}$ with the integrand given by the Lebesgue density $\pi$ of the design distribution $P$. In the $\bm{\beta}$-fixed lattice design case, $\mathcal{I}^{[s+1:d]}_\pi (h_1,h_2)=\prod_{k=s+1}^d(h_1+h_2)_k$.

Let $\kappa_\ast,n_\ast$ be defined by
\vspace{0.5ex}
\setlength{\tabcolsep}{8pt} 
\renewcommand{\arraystretch}{1.2} 
\begin{center}
	\begin{tabular}{|c||c|c|}
		\hline 
		& $\bm{\beta}$-fixed lattice design & random design\\
		\hline\hline
		$\kappa_\ast$ & $\arg\max\limits_{1\leq \ell \leq d} \frac{\sum_{k=\ell}^d \beta_k}{2+\sum_{k=\ell}^s \alpha_k^{-1}}$ & $1$\\
		\hline
		$n_\ast$ & $n^{\sum_{k=\kappa_\ast}^d \beta_k}$ & $n$ \\
		\hline
	\end{tabular}
\vspace{1ex}
\captionof{table}{Definitions of $\kappa_\ast,n_\ast$.}\label{table:k_ast_n_ast}
\end{center}
Let the limit process $\mathbb{C}(f_0,x_0)$ be defined by
\begin{align}\label{def:C_f}
&\mathbb{C}(f_0,x_0)\equiv \sup_{ \substack{h_1>0,\\ (h_1)_k\leq (x_0)_k, \\ s+1\leq k\leq d}}\inf_{\substack{h_2>0,\\ (h_2)_k\leq (1-x_0)_k, \\ s+1\leq k\leq d}} \\
&\qquad \qquad \bigg[\frac{\G(h_1,h_2)}{\prod_{k=\kappa_\ast}^s \big((h_1)_k+(h_2)_k\big) \mathcal{I}^{[s+1:d]}_\pi(h_1,h_2)}+\bar{f_0}(h_1,h_2;x_0)\bigg],\nonumber
\end{align}
where $\G$ is a centered Gaussian process defined on $\R^{d}_{\geq 0}\times \R^{d}_{\geq 0}$ with the following covariance structure: for any $(h_1,h_2), (h_1',h_2')$,
\begin{align*}
&\mathrm{Cov}\big(\G(h_1,h_2),\G(h_1',h_2')\big)\\
&= \prod_{k=\kappa_\ast}^s \big( (h_1)_k\wedge (h_1')_k+ (h_2)_k\wedge (h_2')_k\big)\cdot \mathcal{I}^{[s+1:d]}_\pi\big(h_1\wedge h_1', h_2\wedge h_2'\big),
\end{align*}
and 
\begin{align*}
\bar{f_0}(h_1,h_2;x_0) 
\equiv \sum_{ \substack{\bm{j} \in J_\ast,\\ j_k =0,1\leq k\leq \kappa_\ast-1}} \frac{ \partial^{\bm{j}} f_0(x_0)}{(\bm{j}+\bm{1})!} \prod_{k=\kappa_\ast}^s \frac{(h_2)_k^{j_k+1}-(-h_1)_k^{j_k+1}}{(h_2)_k+(h_1)_k}.
\end{align*}	
%is the mean of the Taylor expansion of $f_0(\cdot)-f_0(x_0)$ 
%over $[x_0-h_1,x_0+h_2]$ under the Lebesgue measure in dimensions $\{\kappa_\ast,\ldots,s\}$. 

Furthermore, let $\mathbb{D}_{\bm{\alpha}}$ be defined by
\begin{align}\label{def:D_alpha}
\mathbb{D}_{\bm{\alpha}} &\equiv \sup_{ \substack{h_1>0,\\ (h_1)_k\leq (x_0)_k, \\s+1\leq k\leq d}}\inf_{\substack{h_2>0,\\ (h_2)_k\leq (1-x_0)_k, \\s+1\leq k\leq d}} \bigg[\frac{ \G(h_1,h_2)}{\prod_{k=\kappa_\ast}^s \big((h_1)_k+(h_2)_k\big) \mathcal{I}^{[s+1:d]}_\pi(h_1,h_2)} \\
&\qquad\qquad +\sum_{k=\kappa_\ast}^s \frac{ (h_2)_k^{\alpha_k+1}-(h_1)_k^{\alpha_k+1}}{(h_2)_k+(h_1)_k}\bigg].\nonumber
\end{align}

With these definitions, we are now in position to state the main result of this paper. 

\begin{theorem}\label{thm:limit_distribution_pointwise}
	Suppose Assumptions \ref{assump:smoothness}-\ref{assump:design} hold, and the errors $\{\xi_i\}$ are i.i.d. mean-zero with finite variance $\E \xi_1^2=\sigma^2<\infty$ (and are independent of $\{X_i\}$ in the random design case). With $\kappa_\ast,n_\ast$ defined in Table \ref{table:k_ast_n_ast}, we have the following local rate of convergence:
	\begin{align*}
	(n_\ast/\sigma^2)^{\frac{1}{2+\sum_{k=\kappa_\ast}^s \alpha_k^{-1}}}\big(\hat{f}_n(x_0)-f_0(x_0)\big) = \mathcal{O}_{\mathbf{P}}(1).
	\end{align*}
	If $\kappa_\ast$ is uniquely defined, with $\mathbb{C}(f_0,x_0)$ defined in (\ref{def:C_f}), the following limit theory holds:
		\begin{align*}
	(n_\ast/\sigma^2)^{\frac{1}{2+\sum_{k=\kappa_\ast}^s \alpha_k^{-1}}}\big(\hat{f}_n(x_0)-f_0(x_0)\big) \rightsquigarrow \mathbb{C}(f_0,x_0).
	\end{align*}
	Furthermore, if either $\{\alpha_k\}$ is a set of relative primes or all mixed derivatives of $f_0$ vanish at $x_0$ in $J_\ast$, then 
	\begin{align*}
	\mathbb{C}(f_0,x_0) =_d K(f_0,x_0)\cdot \mathbb{D}_{\bm{\alpha}},
	\end{align*} 
	where $K(f_0,x_0) = \big\{\prod_{k=\kappa_\ast}^s\big({\partial_k^{\alpha_k} f_0(x_0)}/{(\alpha_k+1)!}\big)^{1/\alpha_k}\big\}^{\frac{1}{2+\sum_{k=\kappa_\ast}^s \alpha_k^{-1}}}$ and $\mathbb{D}_{\bm{\alpha}}$ is defined in (\ref{def:D_alpha}).
\end{theorem}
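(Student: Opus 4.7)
The plan is to establish weak convergence via a two-sided localization followed by an argmax/min-max continuous mapping argument. Set the target rate $\omega_n \equiv (n_\ast/\sigma^2)^{-1/(2+\sum_{k=\kappa_\ast}^s \alpha_k^{-1})}$ and localization scales $\rho_{n,k} \equiv \omega_n^{1/\alpha_k}$ for $k \in \{\kappa_\ast,\ldots,s\}$; for $k \in \{s+1,\ldots,d\}$ set $\rho_{n,k}=1$ (these coordinates stay on their original domain), and observe that for $k \in \{1,\ldots,\kappa_\ast-1\}$ the natural scale $\omega_n^{1/\alpha_k}$ is asymptotically smaller than the minimal design resolution, which is the mechanism behind the `screening' of these dimensions. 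Reparametrize by $h_1 \equiv (x_0-x_u)/\rho_n$ and $h_2 \equiv (x_v-x_0)/\rho_n$ (coordinate-wise), and consider the centered, rescaled process
\begin{align*}
M_n(h_1,h_2) \equiv \omega_n^{-1}\big(\bar{Y}\big|_{[x_0-\rho_n h_1,\, x_0+\rho_n h_2]} - f_0(x_0)\big),
\end{align*}
so that $\omega_n^{-1}(\hat{f}_n(x_0)-f_0(x_0)) = \sup_{h_1}\inf_{h_2} M_n(h_1,h_2)$ subject to the appropriate boundary restrictions.

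Next, decompose $M_n = M_n^{(1)} + M_n^{(2)}$ into a noise part $M_n^{(1)}(h_1,h_2) = \omega_n^{-1}\bar{\xi}\big|_{[x_0-\rho_n h_1,\, x_0+\rho_n h_2]}$ and a bias part $M_n^{(2)}(h_1,h_2) = \omega_n^{-1}(\bar{f_0}\big|_{[\cdot]}-f_0(x_0))$. For the bias, I would apply the Taylor expansion in Assumption \ref{assump:smoothness} and invoke Lemma \ref{lem:mixed_derivative_vanish}(2) to retain only multi-indices $\bm{j}\in J_\ast$ in the limit; a direct integration then shows that terms with $j_k\geq 1$ for some $k<\kappa_\ast$ are of subcritical order in $\omega_n$, yielding termwise convergence $M_n^{(2)}(h_1,h_2)\to \bar{f_0}(h_1,h_2;x_0)$. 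For the noise, the count of design points in $[x_0-\rho_n h_1,\, x_0+\rho_n h_2]$ scales as a multilinear function of $(\rho_n h_1 + \rho_n h_2)$ in the active coordinates times $\mathcal{I}_\pi^{[s+1:d]}$ in the remaining ones; a Lindeberg CLT together with tightness on compacts gives $M_n^{(1)} \rightsquigarrow \G(h_1,h_2)/\big[\prod_{k=\kappa_\ast}^s ((h_1)_k+(h_2)_k)\,\mathcal{I}_\pi^{[s+1:d]}(h_1,h_2)\big]$ as processes. The random-design case is handled identically after invoking a law of large numbers for the design counts.

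The main obstacle is upgrading process convergence on compacts to convergence of the max-min functional. Unlike univariate monotone problems, where the switching relation reduces everything to a one-sided cube-root argmax, here sharp \emph{two-sided} control of the optimal $(h_1^\ast,h_2^\ast)$ is required in every active dimension $k \in \{\kappa_\ast,\ldots,s\}$, together with one-sided upper control in dimensions $k \in \{s+1,\ldots,d\}$. For the upper bounds I would exploit Lemma \ref{lem:mixed_derivative_vanish}(1) (positivity of $\partial_k^{\alpha_k}f_0(x_0)$, with $\alpha_k$ odd), which forces the bias $M_n^{(2)}$ to become prohibitively negative (resp.\ positive) once $(h_1)_k$ (resp.\ $(h_2)_k$) is too large, so the outer $\sup$ (resp.\ inner $\inf$) cannot be attained there. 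For the lower bounds, shrinking any $(h_i)_k$ below the scale $\rho_{n,k}$ inflates the noise variance as an inverse product of side lengths without any compensating bias reduction; a Gaussian tail bound combined with a peeling argument yields stochastic boundedness away from zero. Combined with the process convergence, a uniform argmax-type continuous mapping theorem for max-min functionals then delivers the first two assertions.

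For the final reduction $\mathbb{C}(f_0,x_0)=_d K(f_0,x_0)\cdot\mathbb{D}_{\bm{\alpha}}$, note that when $\{\alpha_k\}_{k=\kappa_\ast}^s$ are relatively prime or all mixed derivatives indexed by $J_\ast$ vanish, Lemma \ref{lem:mixed_derivative_vanish}(3) collapses $\bar{f_0}(h_1,h_2;x_0)$ to the coordinate-separable form
\begin{align*}
\sum_{k=\kappa_\ast}^s \frac{\partial_k^{\alpha_k}f_0(x_0)}{(\alpha_k+1)!}\cdot \frac{(h_2)_k^{\alpha_k+1}-(-h_1)_k^{\alpha_k+1}}{(h_1)_k+(h_2)_k}.
\end{align*}
Substituting $(h_i)_k = c_k(\tilde h_i)_k$ and exploiting the multilinear scaling of the covariance of $\G$ in the active coordinates, I would choose each $c_k$ so as to simultaneously normalize the bias coefficient to $1$ and rebalance the multiplicative change on the Gaussian term; a short computation of the resulting overall factor then produces the constant $K(f_0,x_0)$ and reduces the process to the canonical $\mathbb{D}_{\bm{\alpha}}$.
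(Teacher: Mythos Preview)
Your overall architecture matches the paper: rescale by $\omega_n$ and $r_n$, split into noise plus bias, get process convergence on compacts via a Lindeberg-type argument, then argue that the optimizing $(h_1^\ast,h_2^\ast)$ is tight in a two-sided sense, and finally rescale coordinatewise to extract $K(f_0,x_0)$. The large-deviation (upper) bound you sketch is essentially the paper's argument, and the final scaling step is correct.

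The genuine gap is the small-deviation step. Your reasoning---``shrinking any $(h_i)_k$ inflates the noise variance without compensating bias reduction; a Gaussian tail bound combined with a peeling argument yields stochastic boundedness away from zero''---does not close. Large variance at small $(h_2^\ast)_k$ means the noise term $M_n^{(1)}$ can be \emph{very negative} there, which is precisely what the inner $\inf$ would exploit; a tail bound only tells you the noise is large in magnitude, not that the min avoids that region. The paper's mechanism is different and substantially more delicate: on the event $\{(h_2^\ast)_d<c^{-\gamma_\ast}\}$ they use the outer $\max$ over $h_1$ to select a carefully shaped block ($0\le (h_1)_k\le c^a$ for $k<d$, $0\le (h_1)_d\le c^{-b}$ with $b<\gamma_\ast<b+(a-1)$) and show that the Gaussian fluctuation over the \emph{large} part of this block is positive of order $\sqrt{c^{as_\ast-b}}$ while the increment contributed by the thin $h_2^\ast$-sliver is only $O(\sqrt{c^{as_\ast-\gamma_\ast}\log c})$; together with a crude bias lower bound this forces $\omega_n^{-1}(\hat f_n(x_0)-f_0(x_0))\gtrsim c^{(b-as_\ast)/2}-C c^{a\max\alpha_k}$, which for suitable $a,b,\gamma_\ast$ contradicts the already-established rate. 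This requires a lower bound on the min-max of the limiting Gaussian (their Lemma~\ref{lem:small_deviation_min_max}) and a moduli bound for the thin-sliver increment (their Lemma~\ref{lem:size_difference_gaussian_small_dev}); neither is delivered by peeling. Relatedly, the paper does not invoke an ``argmax-type continuous mapping theorem for max-min functionals'' (none is available off the shelf); instead it uses Rao's localization--delocalization scheme (their Proposition~\ref{prop:local_delocal}), which only needs $\Prob(W_{n,c}\ne W_n)\to 0$, $W_{n,c}\rightsquigarrow W_c$, and $\Prob(W_c\ne W)\to 0$, and hence also requires repeating the large/small-deviation analysis for the limit process $W$.
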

\begin{proof}
See Section \ref{section:proof_limit_dist}.
\end{proof}

\begin{remark}\label{rmk:dependence_C_alpha}
A few technical remarks:
\begin{enumerate}
	  \item In the $\bm{\beta}$-fixed lattice design case where $\pi(x)=1$ is used to calculate $\mathcal{I}^{[s+1:d]}_\pi (h_1,h_2)$, 
	  the covariance structure of $\G$ is simpler:
	  	\begin{align}\label{eqn:cov_G_simple}
	  	&\mathrm{Cov}\big(\G(h_1,h_2),\G(h_1',h_2')\big)= \prod_{k=\kappa_\ast}^d \big( (h_1)_k\wedge (h_1')_k+ (h_2)_k\wedge (h_2')_k\big),
	  	\end{align}
	  	and can be represented as follows: let $d_\ast\equiv d-\kappa_\ast+1$, and let $\{\mathbb{B}_i: i \in \{1,2\}^{d_\ast}\}$ be independent Brownian sheets on $\R_{\geq 0}^{d_\ast}$. For any $h_1,h_2 \in \R_{\geq 0}^d$, let $\bar{\G}(h_1,h_2)\equiv \sum_{ i \in \{1,2\}^{d_\ast} } \mathbb{B}_i\big((h_{i_1})_{\kappa_\ast}, \ldots, (h_{i_{d_\ast}})_d\big)$. Then $\G(\cdot,\cdot)=_d\bar{\G}(\cdot,\cdot)$. A similar representation holds in the random design case for $s=d$.
	\item $\mathbb{C}(f_0,x_0)$ has at least a sub-gaussian tail and hence admits moments of all orders (cf. Lemma \ref{lem:finite_expectation_W}).
	\item When either $\{\alpha_k\}$ is a set of relative primes or all mixed derivatives of $f_0$ vanish at $x_0$ in $J_\ast$, the following self-similarity property of the process $\G(\cdot,\cdot)$ is essential for the representation $\mathbb{C}(f_0,x_0) =_d K(f_0,x_0)\cdot \mathbb{D}_{\bm{\alpha}}$: for $\gamma \in \R_{\geq 0}^d$ with $\gamma_1=\ldots=\gamma_{\kappa_\ast-1}=0, \gamma_{\kappa_\ast}\ldots,\gamma_s,\gamma_{s+1}=\ldots=\gamma_d=1$, $
	\G(\gamma \cdot,\gamma \cdot) =_d \big(\prod_{k=\kappa_\ast}^s \gamma_k\big)^{1/2}\cdot \G(\cdot,\cdot)$. 
    \item $\mathbb{D}_{\bm{\alpha}}$ (and $\mathbb{C}(f_0,x_0)$) can be represented by sup-inf over the summation of a stochastic term plus a non-random drift term, similar to that of the Chernoff distribution; see (\ref{eqn:chernoff}) below for an explicit derivation of $\mathbb{D}_1$ being the Chernoff distribution.
    \item Although implicit in notation, $\mathbb{D}_{\bm{\alpha}}$ can depend on $x_0$ through $\mathcal{I}_\pi^{[s+1:d]}$. However, such dependence disappears under (i) the fixed lattice design and (ii) the random design with uniform distribution. For general distributions $P$ in the random design case, if $s=d$ and $\mathbb{C}(f_0,x_0)=_d K(f_0,x_0)\cdot \mathbb{D}_{\bm{\alpha}}$, the dependence of $x_0$ within $\mathbb{D}_{\bm{\alpha}}$ can be assimilated into the constant. In fact, by taking $\sigma$ to be $\sigma/\sqrt{\pi(x_0)}$ in (\ref{ineq:limit_dist_rescale}), we have:
    	\begin{align*}
    	&\big(\pi(x_0)n/\sigma^2\big)^{\frac{1}{2+\sum_{k=1}^d \alpha_k^{-1}}}\big(\hat{f}_n(x_0)-f_0(x_0)\big)\rightsquigarrow K(f_0,x_0)\\
    	&\quad\times \sup_{h_1>0}\inf_{h_2>0}\bigg[\frac{\G(h_1,h_2)}{\prod_{k=1}^d \big((h_1)_k+(h_2)_k\big)}  +\prod_{k=1}^d \frac{(h_2)_k^{\alpha_k+1}-(h_1)_k^{\alpha_k+1}}{(h_2)_k+(h_1)_k}\bigg],
    	\end{align*}
     where $\G$ is the Gaussian process with the covariance structure 
	(\ref{eqn:cov_G_simple}).
\end{enumerate}
 
\end{remark}

Theorem \ref{thm:limit_distribution_pointwise} shows that the max-min block estimator (\ref{def:max_min_estimator}) adapts to the local smoothness levels $\{\alpha_k\}$ and the intrinsic dimension $s$ of the isotonic regression function $f_0$, in both the fixed lattice and random design settings. One particularly interesting consequence of the above theorem is that \emph{the adaptive local rates for the fixed lattice and random design cases are in general not the same}. Indeed,
\begin{align*}
\omega_n^{\mathrm{fixed}}
&\equiv n^{-\frac{\sum_{k=\kappa_\ast}^d \beta_k}{2+\sum_{k=\kappa_\ast}^s \alpha_k^{-1} }}= n^{-\max_{1\leq \ell \leq d}\frac{\sum_{k=\ell}^d \beta_k}{2+\sum_{k=\ell}^s \alpha_k^{-1} }},\\
\omega_n^{\mathrm{random}}&\equiv n^{-\frac{1}{2+\sum_{k=1}^s \alpha_k^{-1}}}= n^{-\frac{\sum_{k=1}^d \beta_k}{2+\sum_{k=1}^s \alpha_k^{-1} }},\nonumber
\end{align*}
so that $\omega_n^{\mathrm{fixed}}\leq \omega_n^{\mathrm{random}}$, i.e., the local rate in the fixed lattice design case is \emph{no slower} than that in the random design case.

%We may prove the following stronger assertion.
The following proposition gives an equivalent definition of $\kappa_\ast$ in the fixed lattice design case in Theorem \ref{thm:limit_distribution_pointwise}.

\begin{proposition}\label{prop:comparision_local_rates}
	The following are equivalent under (\ref{def:ordering}):
	\begin{enumerate}
		\item The maximizer of $\ell \mapsto \frac{\sum_{k=\ell}^d \beta_k}{2+\sum_{k=\ell}^s \alpha_k^{-1} }$ is unique and $\kappa_\ast = \arg\max\limits_{1\leq \ell \leq d} \frac{\sum_{k=\ell}^d \beta_k}{2+\sum_{k=\ell}^s \alpha_k^{-1} }$.
		\item For any $1\leq \ell \leq d$, $\frac{\alpha_\ell^{-1}}{2+\sum_{k=\ell}^s \alpha_k^{-1}}\neq \frac{\beta_\ell}{\sum_{k=\ell}^{d} \beta_k }$, and $\kappa_\ast=\min\big\{1\leq \ell \leq d:  \frac{\alpha_\ell^{-1}}{2+\sum_{k=\ell}^s \alpha_k^{-1}}< \frac{\beta_\ell}{\sum_{k=\ell}^{d} \beta_k }   \big\}=\min\{ 1\leq \ell \leq d: (\omega_n^{(\ell)})^{1/\alpha_\ell} n^{\beta_\ell}> 1\}$. Here  $\omega_n^{(\ell)}\equiv n^{-\frac{\sum_{k=\ell}^d \beta_k}{2+\sum_{k=\ell}^s \alpha_k^{-1} }}$ is the unique solution of the fixed-point equation
		\begin{align}\label{eqn:bias_var_eqn}
		\omega =  \frac{1}{\sqrt{\prod_{k=\ell}^{d} \big(\omega^{1/\alpha_k} n^{\beta_k}\big) }}.
		\end{align}		
	\end{enumerate}
\end{proposition}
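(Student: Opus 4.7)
The plan is to reduce the entire proposition to analyzing the sign of consecutive differences of $T(\ell) \equiv A_\ell/B_\ell$, where $A_\ell \equiv \sum_{k=\ell}^d \beta_k$ and $B_\ell \equiv 2 + \sum_{k=\ell}^s \alpha_k^{-1}$. A direct computation using $A_{\ell+1} = A_\ell - \beta_\ell$ and $B_{\ell+1} = B_\ell - \alpha_\ell^{-1}$ (valid for all $\ell$ under the convention $\alpha_\ell^{-1} = 0$ for $\ell > s$) yields
\begin{align*}
T(\ell+1) - T(\ell) = \frac{A_\ell\alpha_\ell^{-1} - \beta_\ell B_\ell}{B_\ell B_{\ell+1}},
\end{align*}
so the sign of $T(\ell+1) - T(\ell)$ equals that of $\alpha_\ell^{-1}/B_\ell - \beta_\ell/A_\ell$; equivalently, for $\ell \leq s$, it equals the sign of $T(\ell) - \alpha_\ell\beta_\ell$, while for $\ell > s$ the difference is automatically strictly negative.

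The key step is then to establish unimodality of $T$ under the ordering (\ref{def:ordering}). The forward induction claim is: if $T(\ell+1) \leq T(\ell)$, then $T(\ell+2) \leq T(\ell+1)$. For $\ell+1 > s$ this is automatic; for $\ell+1 \leq s$ the hypothesis translates to $T(\ell) \leq \alpha_\ell\beta_\ell$, so by (\ref{def:ordering}),
\begin{align*}
T(\ell+1) \leq T(\ell) \leq \alpha_\ell\beta_\ell \leq \alpha_{\ell+1}\beta_{\ell+1},
\end{align*}
and applying the sign computation at position $\ell+1$ gives $T(\ell+2) \leq T(\ell+1)$. The contrapositive propagates strict increase backward: whenever $T(\ell+1) > T(\ell)$, we must have $T(\ell) > T(\ell-1)$. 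Consequently there exists $\ell^\ast$ with $T$ strictly increasing on $\{1,\ldots,\ell^\ast\}$ and non-increasing on $\{\ell^\ast,\ldots,d\}$.

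The equivalence (1)$\Leftrightarrow$(2) then follows quickly. Under the no-tie hypothesis in (2), the monotonicities are strict on both sides of $\ell^\ast$, so the maximizer is unique and coincides with the smallest $\ell$ satisfying $\alpha_\ell^{-1}/B_\ell < \beta_\ell/A_\ell$; this yields (2)$\Rightarrow$(1). Conversely, if $\alpha_\ell^{-1}/B_\ell = \beta_\ell/A_\ell$ for some $\ell \leq s$, then $T(\ell) = T(\ell+1)$, and the unimodal structure forces both indices to attain the maximum of $T$ (to the left of $\ell$ we have strict increase, hence $T(\ell') < T(\ell)$; to the right of $\ell+1$ we have $T(\ell') \leq T(\ell+1)$ by the forward step), contradicting uniqueness and giving (1)$\Rightarrow$(2).

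Finally, I would verify the fixed-point characterization by direct substitution: plugging $\omega = n^{-A_\ell/B_\ell}$ into (\ref{eqn:bias_var_eqn}) and using $\sum_{k=\ell}^d \alpha_k^{-1} = B_\ell - 2$ reduces the equation to $\omega^{B_\ell/2} = n^{-A_\ell/2}$, whose unique positive solution is $n^{-A_\ell/B_\ell}$; this identifies $\omega_n^{(\ell)}$. Then $(\omega_n^{(\ell)})^{1/\alpha_\ell} n^{\beta_\ell} = n^{\beta_\ell - A_\ell/(\alpha_\ell B_\ell)}$ exceeds $1$ iff $\beta_\ell/A_\ell > \alpha_\ell^{-1}/B_\ell$, matching the second characterization of $\kappa_\ast$ in (2). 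The only place where real care is needed is the unimodality argument, since it requires interlocking the ordering (\ref{def:ordering}) with the sign analysis to forbid a subsequent strict increase once a weak decrease has occurred; everything else is arithmetic bookkeeping under the convention $\alpha_k^{-1} = 0$ for $k > s$.
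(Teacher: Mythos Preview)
Your approach coincides with the paper's: both rest on the sign identity that $T(\ell+1)-T(\ell)$ has the same sign as $\alpha_\ell^{-1}/B_\ell-\beta_\ell/A_\ell$. The paper goes one algebraic step further, rewriting this as $2$ versus $\psi(\ell)\equiv\sum_{k=\ell}^d\beta_k\big(\tfrac{1}{\alpha_\ell\beta_\ell}-\tfrac{1}{\alpha_k\beta_k}\big)$, which is \emph{globally} non-increasing under (\ref{def:ordering}); unimodality of $T$ and the equivalence (1)$\Leftrightarrow$(2) then follow in one line without any induction. Your forward-induction unimodality argument reaches the same conclusion by a slightly longer but perfectly valid route; the fixed-point verification is identical in spirit.

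One small gap to patch in your (1)$\Rightarrow$(2): you write that if $T(\ell)=T(\ell+1)$ then ``to the left of $\ell$ we have strict increase.'' Your unimodality only gives strict increase up to some $\ell^\ast$ and non-increase thereafter, and a priori the tie could sit at some $\ell>\ell^\ast$. The repair is immediate with your own sign criterion: from the tie, $T(\ell)=\alpha_\ell\beta_\ell$; if $T(\ell-1)>T(\ell)$ then the criterion at $\ell-1$ gives $T(\ell-1)<\alpha_{\ell-1}\beta_{\ell-1}\leq\alpha_\ell\beta_\ell=T(\ell)$, a contradiction. Hence $T(\ell-1)\leq T(\ell)$, and by induction $T(\ell')\leq T(\ell)$ for all $\ell'<\ell$, so $\ell$ and $\ell+1$ are both maximizers. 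The paper's monotone $\psi$ avoids this extra step because a tie $\psi(\ell)=2$ automatically forces $T$ to be non-decreasing on $\{1,\ldots,\ell\}$ and non-increasing on $\{\ell+1,\ldots,d\}$.
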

\begin{proof}
	By algebra, for any relationship $\sim$ in the set $ \{<,\leq,>,\geq \}$, we have (i) $\frac{\sum_{k=\ell}^d \beta_k}{2+\sum_{k=\ell}^s \alpha_k^{-1} } \sim \frac{\sum_{k=\ell+1}^d \beta_k}{2+\sum_{k=\ell+1}^s \alpha_k^{-1} }$ if and only if (ii) $ \frac{\beta_\ell}{\sum_{k=\ell}^{d} \beta_k }\sim \frac{\alpha_\ell^{-1}}{2+\sum_{k=\ell}^s \alpha_k^{-1}}$ if and only if (iii) $2\sim \sum_{k=\ell}^d \beta_k \big(\frac{1}{\alpha_\ell \beta_\ell}-\frac{1}{\alpha_k\beta_k}\big)\equiv \psi(\ell)$. Under the ordering (\ref{def:ordering}), $\ell \mapsto \psi(\ell)$ is non-increasing, so the statement (1) holds if and only if 
	$\sim$ is taken as $<$ for all $1\leq \ell \leq \kappa_\ast-1$ and as $>$ for $\kappa_\ast\leq \ell \leq d$ in (i), if and only if the same $\sim$ are taken in (ii), if and only if the statement (2) holds.
\end{proof}

Proposition \ref{prop:comparision_local_rates} (2) shows that $\kappa_\ast$ can be determined by a sequence of bias-variance equations in (\ref{eqn:bias_var_eqn}). This gives an interesting interpretation of the quantities $\kappa_\ast, n_\ast$ in the $\bm{\beta}$-fixed lattice design case:
\begin{itemize}
	\item $\kappa_\ast(\leq (s+1)\wedge d)$ can be viewed as a `critical dimension' in the sense that samples in dimensions $\{1,\ldots \kappa_\ast-1\}$ do not contribute in the asymptotics of $\hat{f}_n$. In other words, the limit distribution of $\hat{f}_n$ is fully driven by samples in dimensions $\{\kappa_\ast,\ldots,d\}$. The uniqueness of the maximizer of $\ell \mapsto \frac{\sum_{k=\ell}^d \beta_k}{2+\sum_{k=\ell}^s \alpha_k^{-1} }$ gives a well-defined $\kappa_\ast$, and therefore the limiting distribution $\mathbb{C}(f_0,x_0)$.
	\item $n_\ast = n^{\sum_{k=\kappa_\ast}^{d} \beta_k}$ can be viewed as the `effective sample size' over the effective dimensions $\{\kappa_\ast,\ldots,d\}$ for the asymptotics of $\hat{f}_n$. 
\end{itemize}

In contrast, in the random design case the `critical dimension' $\kappa_\ast$ is always $\kappa_\ast=1$, as long as the Lebesgue density of $P$ is suitably regular at $x_0$. In this setting all dimensions $\{1,\ldots,d\}$ are effective, and the `effective sample size' is simply $n_\ast =n$.

The local rate of convergence in Theorem \ref{thm:limit_distribution_pointwise} can now be interpreted very naturally:
the exponent for the `effective sample size' $n_\ast$, namely, $\frac{1}{2+\sum_{k=\kappa_\ast}^s \alpha_k^{-1}}$ becomes the local smoothness along `effective dimensions' $\{\kappa_\ast,\ldots,d\}$ (note that $\alpha_k^{-1}=0$ for $s+1\leq k\leq d$).

\begin{remark}
In the special case that $f_0$ is globally flat, i.e. $f_0\equiv c$ for some $c \in \R$, we have $\alpha_1=\ldots=\alpha_d=\infty$, and therefore the local rate of convergence for the max-min block estimator (\ref{def:max_min_estimator}) is parametric $\mathcal{O}_{\mathbf{P}}(n^{-1/2})$. When $f_0$ is locally flat, it is shown by \cite{carolan1999asymptotic} (see also \cite{groeneboom1983concave}) that in the closely related univariate monotone density estimation problem, the Grenander estimator converges at a parametric rate, with a limiting distribution involving the maximal interval contained in the flat region. In the multivariate case, the shape for locally flat regions can be quite complicated. For example, for $d=2$ and any upper set $U\subset [1/2,1]^2$, consider $f_0\equiv \bm{1}_U$. Then the local rate of convergence for the max-min block estimator (\ref{def:max_min_estimator}) is still parametric at, say, $(1/4,1/4)$, but the limiting distribution would depend crucially on the exact shape of $U$. It is an interesting open problem to characterize all possible locally flat regions and derive the corresponding limiting distributions for the max-min block estimator (\ref{def:max_min_estimator}).
\end{remark}

\subsection{Comparison of local rates}

In this section, we make comparisons of the local rates in different fixed lattice and random designs. As will be seen below, the discrepancy of the local rates appears when either the local smoothness levels of the isotonic regression function, or the sizes of the lattice differ substantially along different dimensions.

\subsubsection{Difference in local rates due to imbalanced local smoothness levels}

Consider the case where the local smoothness levels are imbalanced with $\alpha_1=\ldots=\alpha_s = \alpha, \alpha_{s+1}=\ldots=\alpha_d =\infty$ for some $\alpha\geq 1, 1\leq s< d$, while the sizes of the lattice are balanced with $\beta_1=\ldots=\beta_d = 1/d$. By Theorem \ref{thm:limit_distribution_pointwise}, we have the following corollary:
\begin{corollary}\label{cor:fixed_random}
	Suppose that the assumptions in Theorem \ref{thm:limit_distribution_pointwise} hold with $\alpha_1=\ldots=\alpha_s = \alpha\geq 1, \alpha_{s+1}=\ldots=\alpha_d =\infty$ for some $1\leq s< d$, and $\beta_1=\ldots=\beta_d=1/d$. Then in the fixed lattice design case, 
	\begin{align*}
	&(n/\sigma^2)^{\frac{1}{2+s/\alpha}}\big(\hat{f}_n(x_0)-f_0(x_0)\big) \rightsquigarrow  \mathbb{C}(f_0,x_0),\quad \alpha>(d-s)/2;\\
	&(n^{1-s/d}/\sigma^2)^{1/2}\big(\hat{f}_n(x_0)-f_0(x_0)\big)\rightsquigarrow  \mathbb{C}(f_0,x_0),\quad \alpha<(d-s)/2.
	\end{align*}
	In the random design case,
	\begin{align*}
	(n/\sigma^2)^{\frac{1}{2+s/\alpha}}\big(\hat{f}_n(x_0)-f_0(x_0)\big) \rightsquigarrow  \mathbb{C}(f_0,x_0).
	\end{align*}
\end{corollary}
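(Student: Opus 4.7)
The plan is to specialize Theorem~\ref{thm:limit_distribution_pointwise} to the given configuration, so the entire task reduces to identifying $\kappa_\ast$ and $n_\ast$ from Table~\ref{table:k_ast_n_ast} and checking that $\kappa_\ast$ is uniquely attained (so that the full weak convergence part of the theorem applies, not merely the tightness part). The random design case is immediate from the table: $\kappa_\ast=1$ and $n_\ast=n$, and since $\sum_{k=1}^s\alpha_k^{-1}=s/\alpha$, the exponent is $1/(2+s/\alpha)$, which matches the stated limit.

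For the $\bm{\beta}$-fixed lattice case I would locate the maximizer of $\psi(\ell):=\frac{\sum_{k=\ell}^d\beta_k}{2+\sum_{k=\ell}^s\alpha_k^{-1}}$ over $1\le\ell\le d$. Under our choices this simplifies to $\psi(\ell)=\frac{\alpha(d-\ell+1)}{d(2\alpha+s-\ell+1)}$ for $1\le\ell\le s$ and $\psi(\ell)=\frac{d-\ell+1}{2d}$ for $s+1\le\ell\le d$. The second expression is obviously strictly decreasing in $\ell$, so the question reduces to the monotonicity of $\psi$ on $\{1,\ldots,s\}$ together with the single comparison $\psi(s)$ versus $\psi(s+1)$. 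A direct calculation gives
\[
\frac{\psi(\ell+1)}{\psi(\ell)}=\frac{(d-\ell)(2\alpha+s-\ell+1)}{(d-\ell+1)(2\alpha+s-\ell)}\qquad(1\le\ell\le s-1),
\]
and expanding both sides shows this ratio exceeds $1$ iff $d-s>2\alpha$. Similarly $\psi(s)/\psi(s+1)=\frac{2\alpha(d-s+1)}{(2\alpha+1)(d-s)}$ exceeds $1$ iff $2\alpha>d-s$, so the sign of $2\alpha-(d-s)$ controls all adjacent comparisons.

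Combining these observations yields the two stated regimes. If $\alpha>(d-s)/2$, then $\psi$ is strictly decreasing on all of $\{1,\ldots,d\}$, so $\kappa_\ast=1$ (uniquely), giving $n_\ast=n$ and exponent $1/(2+s/\alpha)$. If $\alpha<(d-s)/2$, then $\psi$ strictly increases on $\{1,\ldots,s\}$, jumps up from $s$ to $s+1$, and strictly decreases on $\{s+1,\ldots,d\}$, so $\kappa_\ast=s+1$ (uniquely), giving $n_\ast=n^{(d-s)/d}=n^{1-s/d}$ and exponent $1/2$ (the inner sum $\sum_{k=s+1}^s\alpha_k^{-1}$ is empty). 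Applying the uniqueness branch of Theorem~\ref{thm:limit_distribution_pointwise} in each regime yields exactly the two claimed convergences to $\mathbb{C}(f_0,x_0)$.

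There is essentially no obstacle, as the argument is elementary algebra on $\psi(\cdot)$; the only point that deserves attention is the uniqueness of $\kappa_\ast$, which is precisely why the critical boundary value $\alpha=(d-s)/2$ is correctly excluded from the statement---at that value $\psi(1)=\psi(s+1)$ and Theorem~\ref{thm:limit_distribution_pointwise} would only deliver $\mathcal{O}_{\mathbf{P}}$-tightness rather than weak convergence.
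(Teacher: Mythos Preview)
Your proposal is correct and follows precisely the route the paper intends: the corollary is stated as an immediate specialization of Theorem~\ref{thm:limit_distribution_pointwise}, and the only content is the algebraic identification of $\kappa_\ast$ and $n_\ast$, which you carry out in full detail (including the uniqueness check that justifies invoking the weak-convergence part of the theorem rather than merely the rate). The paper itself offers no further argument beyond citing the theorem, so your computation is exactly what is needed.
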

The local rates can be written more compactly:
\begin{itemize}
	\item (\emph{Fixed design}): $
	\hat{f}_n(x_0)-f_0(x_0)=\mathcal{O}_{\mathbf{P}}\big( n^{-\max\{\frac{1}{2+s/\alpha}, \frac{1}{2}\cdot \frac{d-s}{d}\}}\big)$.
	\item (\emph{Random design}): $
	\hat{f}_n(x_0)-f_0(x_0) = \mathcal{O}_{\mathbf{P}}(n^{-\frac{1}{2+s/\alpha}})$.
\end{itemize}

Below we consider two scenarios according to the phase transition boundary $\alpha=(d-s)/2$ given above in the balanced fixed lattice design case.\newline

\noindent \textbf{(Scenario 1: $\alpha>(d-s)/2$).} In this case, $\kappa_\ast = 1$ in the fixed lattice design case, so $\omega_n^{\mathrm{fixed}} = \omega_n^{\mathrm{random}} = n^{-\frac{1}{2+s/\alpha}}$. This includes the important case of $s=d$.  In the special case where $d=1$ and $\alpha_1=1$, Corollary \ref{cor:fixed_random} reduces to the limit distribution theory for univariate isotonic regression: Suppose for simplicity we consider the fixed balanced lattice design, or the uniform random design. Then $\mathbb{C}(f_0,x_0) =_d K(f_0,x_0)\cdot \mathbb{D}_1 = (f_0'(x_0)/2)^{1/3} \cdot\mathbb{D}_1$, where $\mathbb{D}_1$ is the well-known (rescaled) Chernoff distribution. To see this, with $\mathbb{B}$ denoting the standard two-sided Brownian motion starting at $0$, we have (cf. Section 3.3 of \cite{groeneboom2014nonparametric})
\begin{align}\label{eqn:chernoff}
\mathbb{D}_{1} &= \sup_{h_1>0}\inf_{h_2>0}\bigg[\frac{\G(h_1,h_2)}{h_1+h_2}+\big(h_2-h_1\big)\bigg]\\
& =_d \sup_{-h_1<0}\inf_{h_2>0}\bigg[\frac{\big(\mathbb{B}(h_2)+h_2^2\big)-\big(\mathbb{B}(-h_1)+(-h_1)^2\big)}{h_2-(-h_1)}\bigg] \nonumber\\
& = \textrm{slope at zero of the greatest convex minorant of } t\mapsto \mathbb{B}(t)+t^2 \nonumber\\
& =_d \textrm{slope at zero of the least concave majorant of } t\mapsto \mathbb{B}(t)-t^2.\nonumber
\end{align}

It is also interesting to observe that for the most natural case $\alpha_1=\ldots=\alpha_d=1$, the local rate is $\mathcal{O}_{\mathbf{P}}(n^{-1/(2+d)})$. This local rate is, somewhat surprisingly, \emph{faster} than the global minimax rate $\mathcal{O}(n^{-1/2d})$ in $L_2$ metric for $d\geq 3$, cf. \cite{han2017isotonic}. The reason for this is that the global minimax rate in $L_2$ metric is dominated by the anti-chain structure of the multiple isotonic regression functions (cf. \cite{han2017isotonic}), while the smoothness constraint rules out such a structure locally at a fixed point. To put the problem in other words, the global minimax rate in $L_2$ metric is too conservative in capturing the smoothness structure of the isotonic functions as soon as $d\geq 3$. \newline

\noindent \textbf{(Scenario 2: $\alpha<(d-s)/2$).} In this case $\kappa_\ast = s+1>1$, so the local rate of convergence in the fixed lattice design case is much faster than that in the random design case: $\omega_n^{\mathrm{fixed}} \ll \omega_n^{\mathrm{random}} $. 

Let us consider one concrete situation to better understand this phenomenon: $\alpha=1, s=1$ and $d>3$. The local rate is then $\mathcal{O}_{\mathbf{P}}(n^{-\frac{1}{2}\cdot \frac{d-1}{d}})$ in the fixed lattice design case, and is $\mathcal{O}_{\mathbf{P}}(n^{-1/3})$ in the random design case. Suppose for simplicity the regression function $f_0(x_0)=f_0((x_0)_1,\ldots,(x_0)_d)= g_0((x_0)_1)$ for some one-dimensional nondecreasing function $g_0$. Consider fixed lattice and random design cases separately:

\begin{figure}
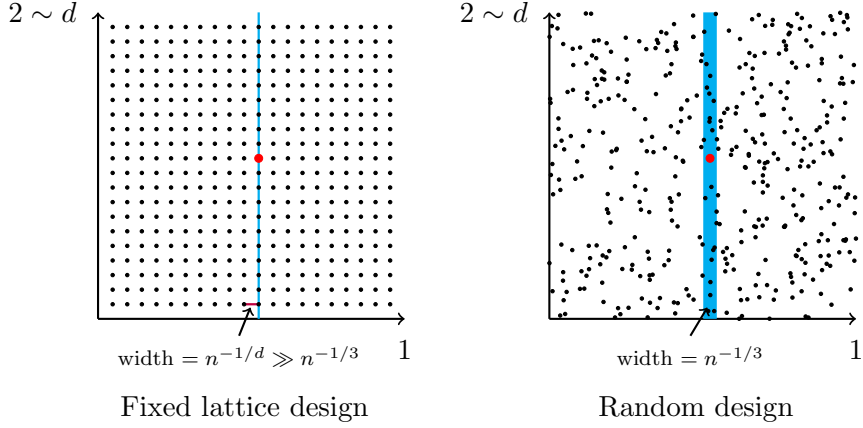

	    	\hspace*{-2em}
	% [inline block 0: 1 envs, 50065 chars -> data_tex | \begin{tikzpicture}[xscale=1.5,yscale=1.5] 	\draw[->][draw=black, thick] (0,0) -- (e,0);...]

	\caption{Illustration of the oracle estimator in the balanced fixed lattice and random design cases. Horizontal direction = dimension $1$. Vertical direction = dimensions $2$ to $d$. Red point = $x_0$. Blue strip = samples over which the oracle estimator takes the average.}
	\label{fig:local_rate}
\end{figure}

\begin{itemize}
	\item In the fixed lattice design case, the oracle estimator first takes sample mean in dimensions $2$ to $d$, and then performs isotonic regression in dimension $1$ with reduced variance $\sigma_1^2\equiv \sigma^2/n^{(d-1)/d}$ and sample size $n_1=n^{1/d}$. However, as long as $d> 3$, there are no longer large samples within the oracle bandwidth $(\sigma_1^2/n_1)^{1/3} = (\sigma^2/n)^{1/3}\ll n^{-1/d}$ in dimension $1$ due to the smoothness. This means that the oracle estimator is simply the sample mean over dimensions $2$ to $d$ with a convergence rate $n^{-\frac{1}{2}\cdot \frac{d-1}{d}}$ when $d>3$. See the left panel of Figure \ref{fig:local_rate}.
	\item In the random design case, since the first coordinates of the design points are distinct with probability one, the oracle estimator is the one dimensional estimator with a bandwidth on the order of $n^{-1/3}$. This gives the usual convergence rate $n^{-1/3}$. See the right panel of Figure \ref{fig:local_rate}.
\end{itemize}

Corollary \ref{cor:fixed_random} with $\alpha=1,s=1,d>3$ can then be understood as saying that the max-min block estimator (\ref{def:max_min_estimator}) mimics this oracle behavior in terms of the local rate of convergence, in both the fixed lattice and random design cases. In more general settings of Corollary \ref{cor:fixed_random}, as soon as the local smoothness levels $\alpha<(d-s)/2$, the first $s$ dimensions are screened out in the fixed lattice design case, so the asymptotics only take place over pure noises in dimensions $\{s+1,\ldots,d\}$.

\subsubsection{Difference in local rates due to imbalanced lattice sizes}
Consider the case where the local smoothness levels are balanced with $\alpha_1=\ldots=\alpha_d = \alpha\geq 1$, while the sizes of the lattice are imbalanced with $\beta_1\leq \ldots\leq \beta_d$. Using $\sum_{k=1}^d \beta_k =1$, Theorem \ref{thm:limit_distribution_pointwise} and the equivalent definition of $\kappa_\ast$ in Proposition \ref{prop:comparision_local_rates}, we have the following.

\begin{corollary}
Suppose that the assumptions in Theorem \ref{thm:limit_distribution_pointwise} hold with $\alpha_1=\ldots=\alpha_d = \alpha\geq 1$, and $\beta_1\leq \ldots\leq \beta_d$. 

\noindent In the fixed lattice design case, suppose $\beta_\ell \neq \frac{1-\sum_{k=1}^{\ell-1} \beta_k}{2\alpha+d-\ell+1}$ for all $1\leq \ell\leq d$. Let $
\kappa_\ast \equiv \min\big\{1\leq \ell \leq d: \beta_\ell > \frac{1-\sum_{k=1}^{\ell-1} \beta_k}{2\alpha+d-\ell+1}\big\}$ and $d_\ast \equiv d-\kappa_\ast+1$. Then
\begin{align*}
&(n^{\sum_{k=\kappa_\ast}^d \beta_k}/\sigma^2)^{1/(2+d_\ast/\alpha)}\big(\hat{f}_n(x_0)-f_0(x_0)\big) \rightsquigarrow  \mathbb{C}(f_0,x_0).
\end{align*}
In the random design case,
\begin{align*}
(n/\sigma^2)^{\frac{1}{2+d/\alpha}}\big(\hat{f}_n(x_0)-f_0(x_0)\big) \rightsquigarrow  \mathbb{C}(f_0,x_0).
\end{align*}
\end{corollary}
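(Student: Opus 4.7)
The plan is to obtain this corollary as a direct specialization of Theorem \ref{thm:limit_distribution_pointwise}, using Proposition \ref{prop:comparision_local_rates} to recast $\kappa_\ast$ in a form tailored to the present setting where all marginal smoothness levels coincide. Since $\alpha_1 = \ldots = \alpha_d = \alpha < \infty$, the intrinsic dimension is $s = d$, and the ordering convention (\ref{def:ordering}) reduces to $\alpha\beta_1 \le \ldots \le \alpha\beta_d$, which is exactly the hypothesis $\beta_1 \le \ldots \le \beta_d$. So the first step is simply to verify that the hypotheses of Theorem \ref{thm:limit_distribution_pointwise} are in force; this is immediate.

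For the random design case, I would read off $\kappa_\ast = 1$ and $n_\ast = n$ directly from Table \ref{table:k_ast_n_ast}, and then substitute $\sum_{k=1}^{d} \alpha_k^{-1} = d/\alpha$ into the rate exponent $\frac{1}{2 + \sum_{k=\kappa_\ast}^s \alpha_k^{-1}}$ appearing in Theorem \ref{thm:limit_distribution_pointwise}. This immediately produces the claimed exponent $\frac{1}{2 + d/\alpha}$.

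For the fixed lattice case, I would invoke Proposition \ref{prop:comparision_local_rates}(2). Under $\alpha_k \equiv \alpha$, the two ratios compared in that proposition simplify to
\[
\frac{\alpha_\ell^{-1}}{2 + \sum_{k=\ell}^s \alpha_k^{-1}} = \frac{1}{2\alpha + d - \ell + 1}, \qquad \frac{\beta_\ell}{\sum_{k=\ell}^d \beta_k} = \frac{\beta_\ell}{1 - \sum_{k=1}^{\ell-1}\beta_k},
\]
so the assumed inequality $\beta_\ell \neq \frac{1 - \sum_{k=1}^{\ell-1}\beta_k}{2\alpha + d - \ell + 1}$ for every $\ell$ is precisely the non-coincidence requirement in Proposition \ref{prop:comparision_local_rates}(2). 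This forces $\kappa_\ast$ to be uniquely defined as $\kappa_\ast = \min\{1 \le \ell \le d : \beta_\ell > \frac{1 - \sum_{k=1}^{\ell-1}\beta_k}{2\alpha + d - \ell + 1}\}$; a quick check at $\ell = d$ shows the defining set is nonempty, since the inequality there reduces to $\beta_d(2\alpha+1) > \beta_d$. Plugging $n_\ast = n^{\sum_{k=\kappa_\ast}^d \beta_k}$ and the exponent $\frac{1}{2 + (d-\kappa_\ast+1)/\alpha} = \frac{1}{2 + d_\ast/\alpha}$ into Theorem \ref{thm:limit_distribution_pointwise} finishes the derivation.

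I do not anticipate any genuine technical obstacle here: both conclusions are algebraic consequences of Theorem \ref{thm:limit_distribution_pointwise} and Proposition \ref{prop:comparision_local_rates}, the only modest subtlety being the translation of the $\arg\max$ definition of $\kappa_\ast$ into the monotone-threshold form used in the corollary.
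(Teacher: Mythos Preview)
Your proposal is correct and matches the paper's own approach: the corollary is stated there as a direct consequence of Theorem \ref{thm:limit_distribution_pointwise} together with the equivalent threshold characterization of $\kappa_\ast$ in Proposition \ref{prop:comparision_local_rates} and the identity $\sum_{k=1}^d \beta_k = 1$. Your verification that the defining set is nonempty at $\ell = d$ and your translation of the non-coincidence condition are exactly the algebraic checks implicit in the paper's one-line derivation.
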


\begin{figure}
	\hspace*{-2em}
	\begin{tikzpicture}[xscale=1.5,yscale=1.5]
	\draw[->][draw=black, thick] (0,0) -- (e,0);
	\draw[->][draw=black, thick] (0,0)--(0,e);
	\draw[draw=blue, thick] (0,e)--(e/2.3,2.127351);
	\draw[draw=red, thick] (e/2.3,2.127351)--(e,e/2);
	\draw[dashed] (0,e/2)--(e,e/2);
	\node [left] at (-0.1,e/2) {$\frac{1}{2}$};
	\draw[dashed] (e/2.3,0)--(e/2.3,2.127351);
	\node [below] at (e/2.3,-0.1) {$\frac{1}{2\alpha+2}$};
	\node [above, purple] at (2,2.3) {\scriptsize $\omega_n^{(1)}=$};
	\node [below, purple] at (2,2.35) {\scriptsize $n^{-\frac{1}{2+2/\alpha}}$};
	\node [above, purple] at (0.6,1.9) {\scriptsize $\omega_n^{(2)}=$};
	\node [below, purple] at (0.6,1.95) {\scriptsize $n^{-\frac{1-\beta_1}{2+1/\alpha}}$};

	\node [below] at (e,-0.1) {$1/2$};
	\node [left] at (-0.1,e) {$1$};
	\node [right] at (e,0) {$\beta_1$};
	\node [above] at (0,e) {$\beta_2$};

	\fill[fill=gray!10] plot[smooth, samples=100, domain=4:e+4] (e/2.7+4,e/2.7) -- (e/1.5+4,e/1.5)--(e/2.7+4,0.8148*e);
	\fill[fill=gray!25] plot[smooth, samples=100, domain=4:e+4] (4,e/2.3) -- (e/2.7+4,e/2.7) -- (e/2.7+4,0.8148*e)--(4,e);
	\fill[fill=gray!40] plot[smooth, samples=100, domain=4:e+4] (4,0) -- (e/2.7+4,e/2.7)--(4,e/2.3);
	\draw[->][draw=black, thick] (4,0) -- (e+4,0);
	\draw[->][draw=black, thick] (4,0)--(4,e);
	\draw[dashed] (e/1.5+4,0)--(e/1.5+4,e/1.5);
	\draw[dashed] (e/2.7+4,0)--(e/2.7+4,e/2.7);
	\node [below] at (e+4,-0.1) {$1/2$};
	\node [below] at (e/1.5+4,-0.1) {$1/3$};
	\node [right] at (e+4,0) {$\beta_1$};
	\node [left] at (3.9,e) {$1/2$};
	\node [above] at (4,e) {$\beta_2$};
	\node [below] at (e/2.7+4,-0.1) {$\frac{1}{2\alpha+3}$};
	\node [left] at (3.9,e/2.3) {$\frac{1}{2\alpha+2}$};
	\node [purple] at (5.76,1.65) {\scriptsize $\omega_n^{(1)}= n^{-\frac{1}{2+3/\alpha}}$};
	\node [above, purple] at (4.50,1.85) {\scriptsize $\omega_n^{(2)}=$};
	\node [below, purple] at (4.50,1.9) {\scriptsize $n^{-\frac{1-\beta_1}{2+2/\alpha}}$};
	\node [above, purple] at (4.95,0.5) {\scriptsize $\omega_n^{(3)}=n^{-\frac{1-\beta_1-\beta_2}{2+1/\alpha}}$};
	\draw [fill,green] (4+e/1.5, e/1.5 ) circle [radius=0.05];
	\draw [fill,green] (e, e/2 ) circle [radius=0.05];

	\node [above] at (1.3,-1) {$d=2$};    	
	\node [above] at (5.3,-1) {$d=3$};
	\end{tikzpicture}
	\caption{Phase transitions of the local rate of convergence for the $\bm{\beta}$-fixed lattice design in $d=2$ (left panel, in the line segment $\{\beta_1\leq 1/2, \beta_2=1-\beta_1\}$) and $d=3$ (right panel, in the triangle $\{\beta_1\leq 1/3, \beta_1\leq \beta_2\leq (1-\beta_1)/2\}$). $\omega_n^{(\ell)}$ = local rate with $\kappa_\ast = \ell$. Local rates in fixed lattice and random design cases match for $\kappa_\ast = 1$. Green points = values of $(\beta_1,\beta_2)$ for the balanced fixed lattice design.}
	\label{fig:phase_transition}
\end{figure}
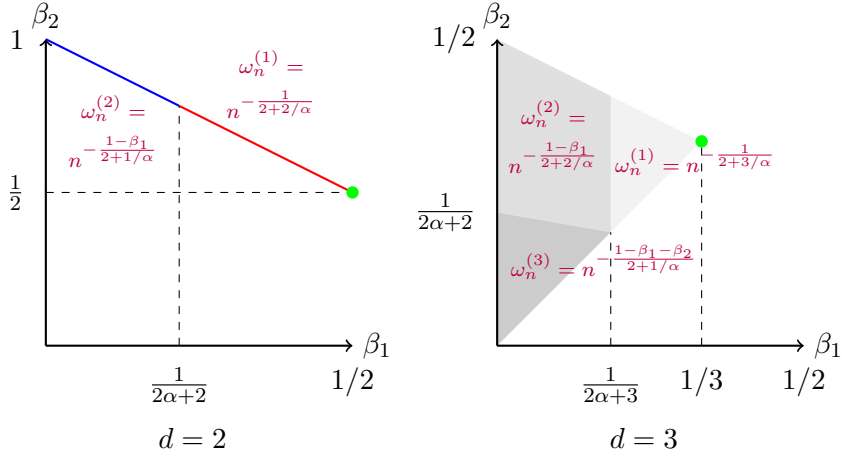

Alternatively, we may write the local rates more compactly:
\begin{itemize}
	\item (\emph{Fixed design}): $
	\hat{f}_n(x_0)-f_0(x_0)=\mathcal{O}_{\mathbf{P}}\big( n^{-\max_{1\leq \ell\leq d} \frac{\sum_{k=\ell}^d \beta_k }{2+(d-\ell+1)/\alpha} }\big)$.
	\item (\emph{Random design}): $
	\hat{f}_n(x_0)-f_0(x_0) = \mathcal{O}_{\mathbf{P}}(n^{-\frac{1}{2+d/\alpha}})$.
\end{itemize}

The basic pattern for the phase transition phenomenon here is similar to the discussion in the previous section. The difference is that now it is the imbalance of the sizes of the lattice in different dimensions, rather than the local smoothness levels, that screens out dimensions with too sparsely spaced design points. See Figure \ref{fig:phase_transition} for the concrete phase transition in the line segment $\{\beta_1+\beta_2=1, \beta_1\leq \beta_2\}$ for $d=2$ and in the triangle $\{\beta_1+\beta_2+\beta_3=1, \beta_1\leq \beta_2\leq \beta_3\}$ for $d=3$.

\begin{remark}
The improvement of the local rate in the fixed lattice design case over the random design case is strongly tied to the lattice structure. It is possible that the local rate in non-lattice fixed design case is slower than that in the random design case. For instance, in the extreme case in $d=2$, suppose that the fixed design points are all located on a straight line $\{(x_1,x_2)\subset [0,1]^2:x_2-1/2 = \theta (x_1-1/2), 0\leq x_1\leq 1\}$ with $\theta<0$. Then we do not have consistent estimation in general as $g(\cdot)\equiv f(\cdot,\theta \cdot)$ may not be monotonically non-decreasing. The situations for general fixed designs will be more complicated. Although we expect that the local rates in `most' fixed design cases will match that in the random design case, it remains an open question to give a complete characterization.
\end{remark}

\subsection{An illustrative simulation result}

We present here an illustrative simulation result to assess the accuracy of the distributional approximation in Theorem \ref{thm:limit_distribution_pointwise} for the case $d=2$ and $\alpha_1=\alpha_2=1$. 

We consider two isotonic regression functions: $f_1(x)=e^{x_1+x_2}$ and $f_2(x)=e(x_1+x_2)$, and $x_0=(1/2,1/2)$. Clearly, $f_2$ is linearization of $f_1$ at $x_0$. In particular, the product of the partial derivatives of the two isotonic functions are the same at $x_0$, i.e., $K(f_1,x_0)=K(f_2,x_0)$. Theorem \ref{thm:limit_distribution_pointwise} indicates that the limiting distributions for the weighted statistics $n^{1/4}(\hat{f}_n(x_0)-f_i(x_0))$ will be the same for $i=1,2$.

\begin{figure}
	\centering
	\includegraphics[width=1\textwidth]{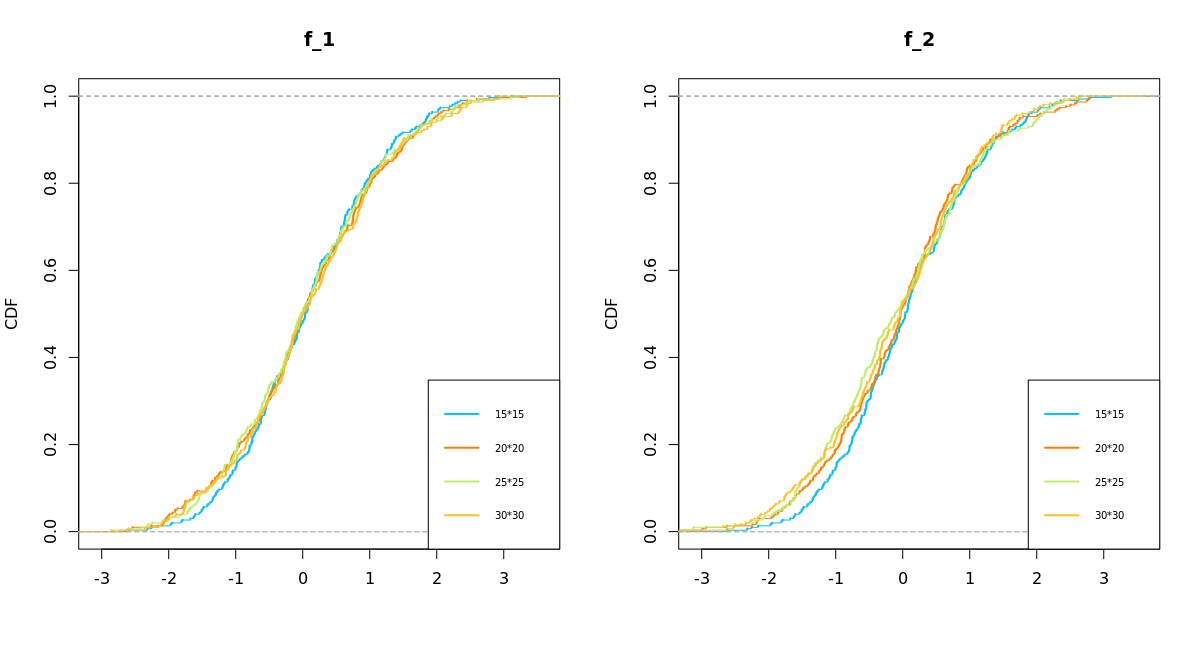}
	\caption{Empirical cumulative distribution functions for $n^{1/4}(\hat{f}_n(x_0)-f_i(x_0)) (i=1,2)$ based on $B=300$ repetitions. Here $d=2$, $f_1(x_1,x_2)=e^{x_1+x_2}$ in the left panel and $f_2(x_1,x_2)=e(x_1+x_2)$ in the right panel, and $x_0=(1/2,1/2)$. Lattice sizes: $15\times 15, 20\times 20, 25\times 25, 30\times 30$.  }
	\label{fig:sim1}
\end{figure}

\begin{figure}
	\centering
	\includegraphics[width=1\textwidth]{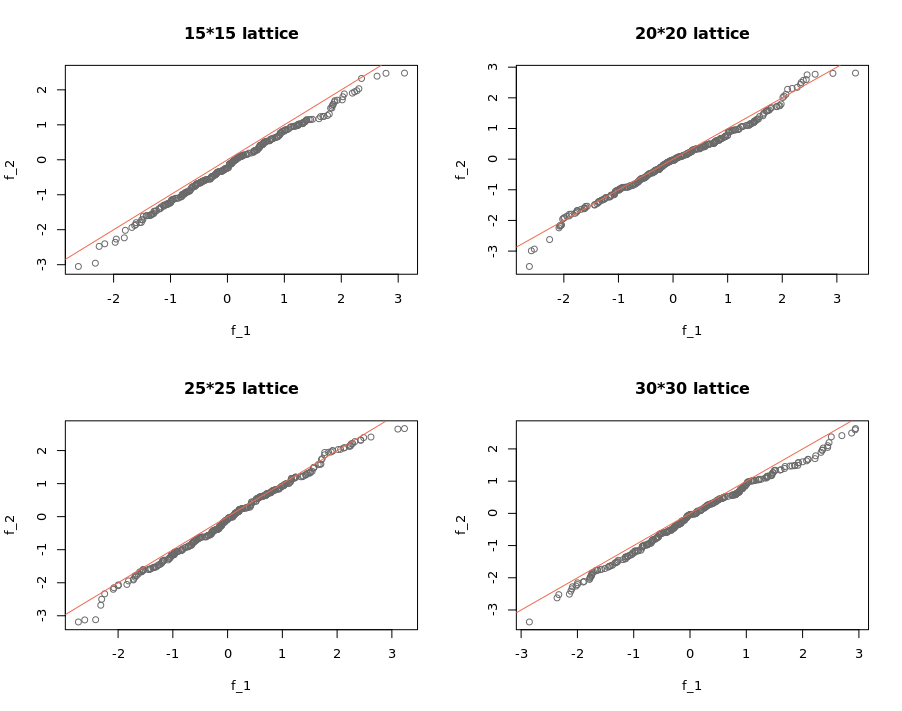}
	\caption{QQ-plots for the distributions of $n^{1/4}(\hat{f}_n(x_0)-f_1(x_0))$ (horizontal) versus $n^{1/4}(\hat{f}_n(x_0)-f_2(x_0))$ (vertical) based on the same simulation as in Figure \ref{fig:sim1}. Lattice sizes: $15\times 15, 20\times 20, 25\times 25, 30\times 30$. }
	\label{fig:sim2}
\end{figure}

We consider the following lattice sizes in the simulation: $15\times 15, 20\times 20, 25\times 25,30\times 30$. Figure \ref{fig:sim1} plots the empirical cumulative distribution functions for the weighted statistics $n^{1/4}(\hat{f}_n(x_0)-f_i(x_0)) (i=1,2)$ based on $B=300$ repetitions with i.i.d. normal errors $\mathcal{N}(0,1)$; it shows that even for the quite small lattice of size $15\times 15$, the max-min block estimator (\ref{def:max_min_estimator}) already achieves reasonable distributional approximation. Not surprisingly, the shapes of the empirical cumulative distribution functions for $f_1,f_2$ are rather similar. This can be further verified through the QQ-plots for different lattice sizes in Figure \ref{fig:sim2}.

\section{Local asymptotic minimax lower bound}\label{section:local_minimax}

We derive in Theorem \ref{thm:limit_distribution_pointwise} the precise limiting distribution of the max-min block estimator (\ref{def:max_min_estimator}) with a local rate of convergence and a limit distribution depending on the unknown smoothness of the regression function at the point of interest. It is natural to wonder if the local rate and the limiting distribution are optimal.

\begin{theorem}\label{thm:minimax_lower_bound}
		Suppose Assumptions \ref{assump:smoothness}-\ref{assump:design} hold, and the errors $\{\xi_i\}$ are i.i.d. $\mathcal{N}(0,\sigma^2)$. Then with $\kappa_\ast, n_\ast$ defined in Theorem \ref{thm:limit_distribution_pointwise}, we have
		\begin{align*}
		&\sup_{\tau>0}\liminf_{n \to \infty} \inf_{\tilde{f}_n} \sup_{f \in \mathcal{F}_d: \ell_2^2(f,f_0)\leq \tau \sigma^2/n} \E_f \bigg[ (n_\ast/\sigma^2)^{\frac{1}{2+\sum_{k=\kappa_\ast}^s \alpha_k^{-1}}} \bigabs{\tilde{f}_n(x_0)-f(x_0)}\bigg]\\
		&\qquad\qquad \geq  L_{d, \pnorm{\bm{\alpha}}{\infty},P,f_0}.
		\end{align*}
		Furthermore, if all mixed derivatives of $f_0$ vanish at $x_0$, then
			\begin{align*}
		&\sup_{\tau>0}\liminf_{n \to \infty} \inf_{\tilde{f}_n} \sup_{f \in \mathcal{F}_d: \ell_2^2(f,f_0)\leq \tau \sigma^2/n} \E_f \bigg[ (n_\ast/\sigma^2)^{\frac{1}{2+\sum_{k=\kappa_\ast}^s \alpha_k^{-1}}} \bigabs{\tilde{f}_n(x_0)-f(x_0)}\bigg]\\
		&\qquad\qquad \geq L_{d, \pnorm{\bm{\alpha}}{\infty},P} \cdot \bigg( \prod_{k=\kappa_\ast}^s \bigg(\frac{\partial_k^{\alpha_k} f_0(x_0)}{(\alpha_k+1)!}\bigg)^{1/\alpha_k}\bigg)^{\frac{1}{2+\sum_{k=\kappa_\ast}^s \alpha_k^{-1}}}.
		\end{align*}
	Here $\pnorm{\bm{\alpha}}{\infty}\equiv\max_{\kappa_\ast\leq k\leq s}\alpha_k$, $
	\ell_2^2(f,f_0)\equiv n^{-1}\sum_{i=1}^n (f(X_i)-f_0(X_i))^2$ for the fixed lattice design case and $
	\ell_2^2(f,f_0)\equiv P (f-f_0)^2$ for the random design case.
\end{theorem}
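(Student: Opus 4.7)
The plan is to apply Le Cam's two-point method. For any $f_1 \in \mathcal{F}_d$ with $\ell_2^2(f_0, f_1) \leq \tau \sigma^2/n$, Gaussianity yields $\mathrm{KL}(P_{f_0}^n, P_{f_1}^n) = n\ell_2^2(f_0,f_1)/(2\sigma^2) \leq \tau/2$, and Pinsker's inequality then bounds the total variation between $P_{f_0}^n$ and $P_{f_1}^n$ by $\sqrt{\tau/4} < 1$ for small $\tau$. Le Cam's lemma produces
\begin{align*}
\inf_{\tilde{f}_n}\max_{f\in\{f_0,f_1\}}\E_f\bigabs{\tilde{f}_n(x_0)-f(x_0)} \gtrsim \bigabs{f_1(x_0)-f_0(x_0)},
\end{align*}
reducing the theorem to constructing $f_1 \in \mathcal{F}_d$ in this $\ell_2$-neighborhood with $|f_1(x_0)-f_0(x_0)|$ of order $\omega_n \equiv (n_\ast/\sigma^2)^{-1/(2+\sum_{k=\kappa_\ast}^s \alpha_k^{-1})}$, with the sharp constant in the second statement.

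Take $(r_n)_k \equiv \omega_n^{1/\alpha_k}$ for $1\le k\le s$, and set $f_1 = f_0 + \delta\psi_n$ with a small constant $\delta = \delta(f_0,x_0)>0$ and
\begin{align*}
\psi_n(x) = \omega_n \prod_{k=1}^s \phi_k\bigl((x_k-(x_0)_k)/(r_n)_k\bigr)\cdot \prod_{k=s+1}^d \mathbf{1}\{x_k\ge (x_0)_k\},
\end{align*}
where each $\phi_k$ is a smooth bump on $[-1,1]$ with $\phi_k(0)=1$, $\bigabs{\phi_k}\le 1$, and $\bigabs{\phi_k'(y)}\lesssim |y|^{\alpha_k-1}$ (obtainable e.g.\ by smoothing $(1-|y|^{\alpha_k+1})_+$). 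Then $\psi_n(x_0)=\omega_n$, so $f_1(x_0)-f_0(x_0)=\delta\omega_n$.

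The two things to verify are (a) $f_1\in\mathcal{F}_d$ and (b) $\ell_2^2(f_0,f_1)\le \tau\sigma^2/n$. For (a), Lemma \ref{lem:mixed_derivative_vanish} gives $\alpha_k$ odd with $\partial_k^{\alpha_k} f_0(x_0)>0$, and vanishes the mixed derivatives outside $J_\ast$, so on $\mathrm{supp}(\psi_n)$ the Taylor expansion of Assumption \ref{assump:smoothness} gives
\begin{align*}
\partial_k f_0(x) = \frac{\partial_k^{\alpha_k} f_0(x_0)}{(\alpha_k-1)!}(x_k-(x_0)_k)^{\alpha_k-1}(1+o(1)) + (\text{cross terms from } J_\ast)
\end{align*}
with the pure term nonnegative since $\alpha_k-1$ is even. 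The choice of $\phi_k$ combined with $\omega_n=(r_n)_k^{\alpha_k}$ yields
\begin{align*}
\bigabs{\partial_k\psi_n(x)} \lesssim \omega_n\bigabs{\phi_k'\bigl((x_k-(x_0)_k)/(r_n)_k\bigr)}/(r_n)_k \lesssim (x_k-(x_0)_k)^{\alpha_k-1}
\end{align*}
for $k\le s$, so for $\delta$ small enough (depending on the Taylor coefficients of $f_0$ at $x_0$), $\partial_k f_1\ge 0$; for $k>s$, $\partial_k f_0\equiv 0$ by intrinsic dimension $s$ and $\psi_n$ is nondecreasing in $x_k$, hence so is $f_1$. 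For (b), a direct sample count in the fixed lattice gives
\begin{align*}
N_{\mathrm{supp}(\psi_n)} \asymp \prod_{k=1}^s\bigl(\omega_n^{1/\alpha_k}n^{\beta_k}\vee 1\bigr)\cdot \prod_{k=s+1}^d n^{\beta_k} \asymp \omega_n^{\sum_{k=\kappa_\ast}^s \alpha_k^{-1}}\cdot n_\ast,
\end{align*}
using Proposition \ref{prop:comparision_local_rates}(2), which gives $\omega_n^{1/\alpha_k}n^{\beta_k}\le 1$ for $k<\kappa_\ast$ and $>1$ for $\kappa_\ast\le k\le s$; whence $\ell_2^2(f_0,f_1)\lesssim \delta^2\omega_n^2\cdot N_{\mathrm{supp}(\psi_n)}/n\asymp \delta^2\sigma^2/n$ by the defining equation of $\omega_n$. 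The random design case is analogous using continuity of $\pi$ at $x_0$. This yields the first statement.

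For the sharp-constant second statement, vanishing of all mixed derivatives in $J_\ast$ removes the cross terms above, and the dimension-wise constraint $\delta\bigabs{\partial_k\psi_n}\le \partial_k f_0$ factorizes across $k$; the maximal admissible $\delta$ then splits as a product over the effective dimensions. The clean way to read off the constant is to rescale coordinates via $\tilde{x}_k \equiv (\partial_k^{\alpha_k} f_0(x_0)/(\alpha_k+1)!)^{1/\alpha_k}(x_k-(x_0)_k)$, which reduces the problem to a normalized one where all effective $\alpha_k$-th derivatives equal a universal constant; applying the first part in the normalized problem produces a universal constant $L_{d,\pnorm{\bm\alpha}{\infty},P}$, and undoing the rescaling re-introduces precisely the factor $K(f_0,x_0)$. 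The principal technical obstacle is the delicate matching of the vanishing order of $\phi_k'$ at $0$ with that of $\partial_k f_0$ at $(x_0)_k$ (both equal to $\alpha_k-1$), which is what forces the scaling $(r_n)_k=\omega_n^{1/\alpha_k}$ and permits $\delta$ to be chosen as an $n$-independent constant, and in the sharp-constant setting pins down the exact dependence of the minimax constant on the local derivatives of $f_0$.
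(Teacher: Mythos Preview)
Your overall framework (Le Cam two-point, KL/Pinsker, then construct an alternative near $f_0$) is the same as the paper's, and your sample-count computation for $\ell_2^2(f_0,f_1)$ via Proposition~\ref{prop:comparision_local_rates}(2) is correct. The difference lies in the construction of the alternative, and there your additive bump $f_1=f_0+\delta\psi_n$ has a genuine gap in the monotonicity verification when mixed derivatives in $J_\ast$ do not vanish.

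Concretely, take $d=2$, $\alpha_1=\alpha_2=3$, and $f_0(x)=\tfrac{1}{3}\big((x_1-(x_0)_1)+(x_2-(x_0)_2)\big)^3$. This satisfies Assumption~\ref{assump:smoothness} with $\partial_k^3 f_0(x_0)=2>0$, and $\partial_1 f_0(x)=(y_1+y_2)^2$ with $y=x-x_0$. Along the line $y_1=-y_2$ (which intersects the support of $\psi_n$ since $(r_n)_1=(r_n)_2$), one has $\partial_1 f_0\equiv 0$, whereas $\partial_1\psi_n(x)=\omega_n(r_n)_1^{-1}\phi_1'(u_1)\phi_2(u_2)$ is strictly negative at, say, $u_1=1/2$, $u_2=-1/2$ (any nontrivial bump has $\phi_1'>0$ on one side and $<0$ on the other). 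Hence $\partial_1 f_1<0$ there for every $\delta>0$, so $f_1\notin\mathcal{F}_d$. Your claim that ``$\delta$ small enough'' suffices uses implicitly that $\partial_k f_0(x)\gtrsim |x_k-(x_0)_k|^{\alpha_k-1}$ on the support, but this comparison fails precisely because mixed derivatives of critical order can force $\partial_k f_0$ to vanish on curves where $(x-x_0)_k\neq 0$.

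The paper sidesteps this entirely by a truncation construction
\[
f_n(x)=\big(f_0(x)\wedge f_0(x_0-hr_n)\big)\bm{1}_{x\le x_0}+f_0(x)\bm{1}_{x\nleq x_0},
\]
which is automatically in $\mathcal{F}_d$ for any $h$ (the minimum of a monotone function with a constant, patched on the lower set $\{x\le x_0\}$, remains coordinatewise nondecreasing). The work then shifts to locating the set $\{f_n\neq f_0\}$ inside a rectangle $[x_0-Ch r_n,x_0]$ via the Taylor expansion, and computing the $\ell_2$ distance there; choosing $h_k\propto(\partial_k^{\alpha_k}f_0(x_0))^{-1/\alpha_k}$ yields the sharp constant directly without a separate rescaling argument. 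Your additive-bump approach does work cleanly in the second (vanishing mixed derivative) case, where $\partial_k f_0(x)\approx \tfrac{\partial_k^{\alpha_k}f_0(x_0)}{(\alpha_k-1)!}(x_k-(x_0)_k)^{\alpha_k-1}$ has no cross terms and the comparison with $|\partial_k\psi_n|$ is coordinate-by-coordinate; but for the first statement you need either the paper's truncation or a bump tailored to the zero set of $\nabla f_0$.
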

\begin{proof}
See Section \ref{section:proof_minimax}.
\end{proof}

Theorem \ref{thm:minimax_lower_bound} shows that the max-min block estimator (\ref{def:max_min_estimator}) enjoys a strong oracle property:  Both (i) the adaptive local rate of convergence and (ii) the dependence on the constants, whenever explicit, concerning the unknown regression function $f_0$ in the limit distribution are optimal in a local asymptotic minimax sense, up to a constant factor depending only on  $d,\pnorm{\bm{\alpha}}{\infty}, P$. Note that here the local minimax lower bound is computed over an $\ell_2$-ball with radius of order $\mathcal{O}(n^{-1/2})$.

\begin{remark}
The dependence of the constant $L_{d, \pnorm{\bm{\alpha}}{\infty},P}$ on $P$ in the second claim of Theorem \ref{thm:minimax_lower_bound} can be further improved if $s=d$ in the random design setting: a slight modification of the proof of Theorem \ref{thm:minimax_lower_bound} shows that $L_{d, \pnorm{\bm{\alpha}}{\infty},P} = (\pi(x_0))^{-1/(2+\sum_{k=1}^d \alpha_k^{-1})} L_{d, \pnorm{\bm{\alpha}}{\infty}}$. The limit distribution of max-min block estimator (\ref{def:max_min_estimator}) achieves the optimal dependence on $\pi(x_0)$ in this setting, cf. Remark \ref{rmk:dependence_C_alpha}.
\end{remark}

\begin{remark}
In a related block-decreasing density estimation problem, \cite{pavlides2012local} establishes a local minimax lower bound in the special case of $\alpha_1=\ldots=\alpha_d=1$. 
Their result and Theorem \ref{thm:minimax_lower_bound} concern different problems but 
are similar in spirit. Global minimax lower bounds in $L_1$ and global risk bounds for histogram-type estimators in the same model are studied in \cite{biau2003risk}. 
\end{remark}

\section{Outline of the proofs}\label{section:proof_outline}

\subsection{Outline for the proof of Theorem \ref{thm:limit_distribution_pointwise}}\label{section:proof_outline_thm1}

The proof of Theorem \ref{thm:limit_distribution_pointwise} is rather involved, so we highlight the main proof ideas here. Recall that $\kappa_\ast =1$ in the random design case. Let
\begin{align}\label{def:r_n}
\omega_n \equiv n_\ast^{-\frac{1}{2+\sum_{k=\kappa_\ast}^s \alpha_k^{-1}}},\quad r_n \equiv (\omega_n^{1/\alpha_1},\ldots,\omega_n^{1/\alpha_d})\bm{1}_{[\kappa_\ast:d]}
\end{align}
The components of $r_n$ indicate the localization rate along each dimension. Now we may re-parametrize the max-min block estimator (\ref{def:max_min_estimator}) on the scale of the rate  vector $r_n$. To this end, let $h_1^\ast,h_2^\ast \in \R_{\geq 0}^d$ be such that
\begin{align}\label{def:h_1_h_2}
\hat{f}_n(x_0) = \max_{h_1\geq 0}\min_{h_2\geq 0} \bar{Y}|_{[x_0-h_1r_n, x_0+h_2 r_n]}=\bar{Y}|_{[x_0-h_1^\ast r_n, x_0+h_2^\ast r_n]}.
\end{align}
We remind the reader that $\bar{Y}|_\cdot$ is the average response over subsets as defined in (\ref{def:average}). Such a re-parametrization relates the problem to its limit Gaussian version. Note that the first $\kappa_\ast-1$ coordinates of $r_n$ is $0$, but this is no problem: we will show that such $h_1^\ast,h_2^\ast$ exist with high probability.

For any $c>1$, define the localized max-min block estimator
\begin{align}\label{def:local_max_min}
\hat{f}_{n,c}(x_0) &\equiv \max_{c^{-\gamma_\ast}\bm{1}\leq h_1\leq c\bm{1} } \min_{c^{-\gamma_\ast}\bm{1}\leq h_2\leq c\bm{1} } \bar{Y}|_{[x_0-h_1 r_n,x_0+h_2r_n]},
\end{align}
where $\gamma_\ast>0$ is chosen large enough. The difference of (\ref{def:local_max_min}) compared with (\ref{def:h_1_h_2}) is that the range of max and min in the global estimator (\ref{def:h_1_h_2}) is restricted to a compact rectangle away from $0$ and $\infty$ in (\ref{def:local_max_min}), so the squared bias and variance in the partial sum process in (\ref{def:local_max_min}) are on the same order. We may therefore expect a non-degenerate limit theory for properly normalized (\ref{def:h_1_h_2}) by `interchanging' the limit and max-min operations in (\ref{def:local_max_min}) and showing that (\ref{def:h_1_h_2}) and (\ref{def:local_max_min}) are sufficiently `close' to each other.

Formally, we need the following localization-delocalization result, due to \cite{rao1969estimation}.
\begin{proposition}\label{prop:local_delocal}
	Suppose that three sequences of random variables $\{W_{n,c}\}$, $\{W_n\}$ and $\{W_c\}$ satisfy the following conditions:
	\begin{enumerate}
		\item $\lim_{c \to \infty} \limsup_{n \to \infty} \Prob(W_{n,c}\neq W_n)=0$.
		\item For every $c>0$, $W_{n,c}\rightsquigarrow W_c$ as $n \to \infty$.
		\item $\lim_{c \to \infty} \Prob(W_c\neq W)=0$.
	\end{enumerate}
	Then $W_n\rightsquigarrow W$ as $n \to \infty$.
\end{proposition}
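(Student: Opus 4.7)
The plan is a textbook sandwich argument, reducing the weak convergence $W_n \rightsquigarrow W$ to the three already-given ingredients via a triangle inequality through the intermediaries $W_{n,c}$ and $W_c$. By the portmanteau theorem, it suffices to show $\E g(W_n) \to \E g(W)$ for every bounded continuous $g$ with $\|g\|_\infty \leq M$. The triangle inequality decomposition
\begin{align*}
|\E g(W_n) - \E g(W)| &\leq |\E g(W_n) - \E g(W_{n,c})| \\
&\quad + |\E g(W_{n,c}) - \E g(W_c)| + |\E g(W_c) - \E g(W)|
\end{align*}
splits the desired control into exactly the three quantities addressed by hypotheses (1)--(3).

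The outer two terms will be handled by the elementary total-variation-style bound $|\E g(X)-\E g(Y)| \leq 2M \Prob(X\neq Y)$, valid whenever $X,Y$ live on a common probability space (implicit in the statement). Applied to $(W_n,W_{n,c})$ and $(W_c,W)$, this gives
\begin{align*}
|\E g(W_n) - \E g(W_{n,c})| \leq 2M\,\Prob(W_n\neq W_{n,c}),\quad
|\E g(W_c) - \E g(W)| \leq 2M\,\Prob(W_c\neq W).
\end{align*}
The middle term vanishes as $n\to\infty$ for each fixed $c$ by hypothesis (2).

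To close the argument, I would execute the limits in the prescribed order, which is dictated precisely by the statement of hypothesis (1). Given $\epsilon>0$, first choose $c$ large enough so that $\Prob(W_c\neq W) \leq \epsilon/(6M)$ (by (3)) and $\limsup_n \Prob(W_n\neq W_{n,c}) \leq \epsilon/(6M)$ (by (1)). With this $c$ now fixed, send $n\to\infty$ and use (2) to make the middle term at most $\epsilon/3$ eventually. This yields $\limsup_n|\E g(W_n)-\E g(W)| \leq \epsilon$, and sending $\epsilon\downarrow 0$ concludes.

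There is no real obstacle here beyond bookkeeping the order of limits: one must commit to $c$ before sending $n\to\infty$, since hypothesis (1) only promises smallness of $\Prob(W_n\neq W_{n,c})$ in the iterated limit $\lim_c \limsup_n$. The only mild care needed is in the total-variation bound for the outer terms, which crucially avoids requiring any continuity of $g$ beyond boundedness and exploits that the coupling $(W_n,W_{n,c})$ and $(W_c,W)$ come for free from the formulation of hypotheses (1) and (3).
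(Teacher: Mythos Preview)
Your proof is correct and is the standard argument for this result. The paper does not actually supply a proof of this proposition; it simply attributes the statement to \cite{rao1969estimation} and invokes it as a black box, so there is nothing further to compare.
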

We choose
\begin{align*}
W_{n,c}&\equiv \omega_n^{-1}\big(\hat{f}_{n,c}(x_0)-f_0(x_0)\big),\quad W_n\equiv \omega_n^{-1}\big(\hat{f}_n(x_0)-f_0(x_0)\big).
\end{align*}
Now we need to verify the conditions of Proposition \ref{prop:local_delocal} for the above defined $W_{n,c}$ and $W_n$, and find out the limit process $W_c$.

To illustrate the most important ideas in our proof, we focus on the simplest 2-dimensional balanced fixed lattice design case where $\alpha_1=\alpha_2=1$, $r_n = (n^{-1/4},n^{-1/4})$ and $\omega_n = n^{-1/4}$ in (\ref{def:r_n}), cf. Assumption \ref{assump:smoothness}.

The first step is to show that the block max-min estimator (\ref{def:h_1_h_2}) can be localized through its local version (\ref{def:local_max_min}):
\begin{align}\label{ineq:proof_outline_1}
\lim_{c \to \infty} \limsup_{n \to \infty} \Prob\big(\hat{f}_n(x_0)\neq \hat{f}_{n,c}(x_0)\big)=0.
\end{align}
It is easy to see that proving (\ref{ineq:proof_outline_1}) reduces to showing that ${h_1^\ast}$ and ${h_2^\ast}$ are both bounded away from $\infty$ and $0$ in probability,  which we refer to as the \emph{large deviation} and \emph{small deviation} problems respectively. In other words, neither the bias nor the variance of the partial sum process in (\ref{def:h_1_h_2}) within the block $[x_0-h_1^\ast r_n,x_0+h_2^\ast r_n]$ will be too large. We accomplish this goal for, e.g. $h_2^\ast$, in several steps:

\noindent (a)	First, similarly as in many one-dimensional problems, we establish a local rate of convergence: 
\begin{align*}
{\hat{f}_n(x_0)-f_0(x_0)}=\mathcal{O}_{\mathbf{P}}(n^{-1/4}).
\end{align*}
	
\begin{figure}
%    	\hspace*{-5em}
\begin{tikzpicture}[xscale=1.5,yscale=1.5]
\fill[fill=pink] plot[smooth, samples=100, domain=-0.1:0.1] (-0.03,0) -- (0.03,0)-- (0.03,2.5)--(-0.03,2.5);
\fill[fill=blue] plot[smooth, samples=100, domain=-0.1:0.1] (0,-0.03) -- (0,0.03)-- (2.5,0.03)--(2.5,-0.03);
\fill[fill=gray!30] plot[smooth, samples=100, domain=-0.5:0] (-0.5,-0.5)--(0,-0.5)--(0,0)--(-0.5,0);
\draw[->][draw=black, thick] (-0.7,0) -- (e,0);
\draw[->][draw=black, thick] (0,-0.7)--(0,e);
\node [below] at (e,-0.1) {$(h)_1$};
\node [below] at (-0.2,-0.1) {$x_0$};
\draw[dashed,thick,pink] (-0.5,-0.5)--(-0.5, 2.5)--(0.7,2.5)--(0.7,-0.5)--(-0.5,-0.5);
\draw[dashed,thick,blue] (-0.5,-0.5)--(2.5,-0.5)--(2.5,0.7)--(-0.5,0.7)--(-0.5,-0.5);
\node [left] at (-0.1,e) {$(h)_2$};
\draw[->][thick] (1.2,1.7)--(0.1,2);
\draw[->][thick] (1.7,1.2)--(2,0.1);
\node [above] at (1.65,1.20) {large bias};
		
\fill[fill=gray!30] plot[smooth, samples=100, domain=3.5:5.6] (4.8,-0.5) -- (5.6,-0.5)-- (5.6,1.6)--(4.8,1.6);
\fill[fill=cyan] plot[smooth, samples=100, domain=5.6:5.8] (5.6,1.6) -- (5.8,1.6)-- (5.8,2.5)--(5.6,2.5);
\draw[->][draw=black, thick] (4-0.7,1.6) -- (e+4,1.6);
\draw[->][draw=black, thick] (5.6,-0.7)--(5.6,e);
\node [below] at (e+4,1.6) {$(h)_1$};
\node [left] at (3.9+1.6,e) {$(h)_2$};
\node [below] at (-0.2+1.6+4,1.55) {$x_0$};
\draw[dashed,thick,red] (5.6,-0.5)--(5.8,-0.5)--(5.8,2.5)--(4.8,2.5)--(4.8,1.6)--(5.6,1.6)--(5.6,-0.5);
\node [below] at (4.8,1.6) {$-c^{-b}$};
\node [left] at (5.6,2.4) {$c$};
\node [left] at (5.6,-0.35) {$-c^2$};
\node [below] at (5.9,1.6) {$c^{-\gamma_\ast}$};
\draw[->][thick] (4.5,0.5)--(4.8,0.7);
\node [above] at (3.95,0.5) {positive};
\node [below] at (3.95,0.5) {noise};
\node [above] at (1,-1.2) {Large deviation};    	
\node [above] at (5,-1.2) {Small deviation};
\end{tikzpicture}
\caption{Proof outlines: Large and small deviation problems.}
\label{fig:localization}
\end{figure}
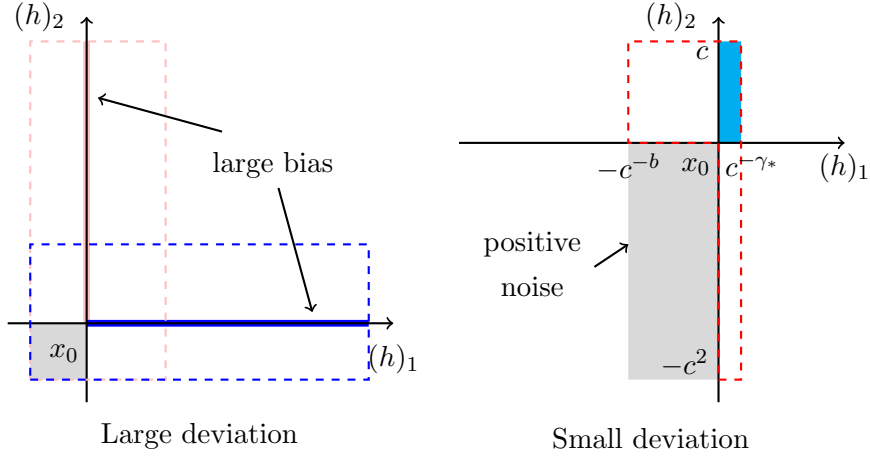

\noindent (b) Next we handle the large deviation problem. In other words, we want to show that $\max\{(h_2^\ast)_1,(h_2^\ast)_2\}\leq c$ with high probability for large $c>0$. By the max-min formula, 
\begin{align*}
\hat{f}_n(x_0)-f_0(x_0)&\geq \underbrace{\big(\bar{f_0}|_{[x_0-n^{-1/4},x_0+h_2^\ast n^{-1/4}]}-f_0(x_0)\big)}_{\textrm{bias}}+\underbrace{\bar{\xi}|_{[x_0-n^{-1/4},x_0+h_2^\ast n^{-1/4}]}}_{\textrm{noise}}.
\end{align*}
The bias term can be handled via (localized) Taylor expansion: suppose $\max\{(h_2^\ast)_1,(h_2^\ast)_2\}>c\gg 1$ (see for example the thick red and blue lines in the left panel of Figure \ref{fig:localization}), then 
\begin{align*}
\mathrm{bias}\gtrsim n^{-1/4} \big(\max\{(h_2^\ast)_1,(h_2^\ast)_2\}-\mathrm{const}.\big)\gtrsim c\cdot n^{-1/4}.
\end{align*}
The noise term is essentially contributed by the shaded area in the left panel of Figure \ref{fig:localization}:
\begin{align*}
\mathrm{noise}\gtrsim - \abs{\mathcal{O}_{\mathbf{P}}(1)} n^{-1/4}.
\end{align*}
Combining the above estimates, if $\max\{(h_2^\ast)_1,(h_2^\ast)_2\}>c$, then
\begin{align*}
n^{1/4} \big(\hat{f}_n(x_0)-f_0(x_0)\big) \gtrsim c-\abs{\mathcal{O}_{\mathbf{P}}(1)},
\end{align*}
which by (a) should occur with small probability for large $c>0$.

\noindent (c) Finally we handle the small deviation problem. 
%The intuition is that if $h_2^\ast$ is too close to $0$, then the variance blows up, which can only occur with small probability. 
By (b), we may assume that $\max\{(h_2^\ast)_1,(h_2^\ast)_2\}\leq c$. Let $0<b<\gamma_\ast$ be constants to be determined. Suppose now $(h_2^\ast)_1<c^{-\gamma_\ast}$ (as in the blue strip in the right panel of Figure \ref{fig:localization}). Using the max-min formula again (but in a different way) and lower bounding the bias yield that
\begin{align*}
	\hat{f}_n(x_0)-f_0(x_0)&\geq \max_{ \substack{0\leq (h_1)_1\leq c^{-b}\\0\leq (h_1)_2\leq c^2} }\bar{\xi}|_{[x_0-h_1 n^{-1/4},x_0+h_2^\ast n^{-1/4}]}\\
	&\qquad\qquad + \min_{ \substack{0\leq (h_1)_1\leq c^{-b}\\0\leq (h_1)_2\leq c^2} }\big(\bar{f_0}|_{[x_0-h_1n^{-1/4},x_0+h_2^\ast n^{-1/4}]}-f_0(x_0)\big)\\
	&\gtrsim n^{-1/4}\bigg[ \max_{ \substack{0\leq (h_1)_1\leq c^{-b}\\0\leq (h_1)_2\leq c^2} } \frac{\G(h_1,h_2^\ast)}{\prod_{k=1}^2(h_1+h_2^\ast)_k} - C_1\cdot c^2 \bigg].
\end{align*}
The idea here is to choose a larger block for $h_1$ compared with the localized block for $h_2^\ast$. This creates a large positive fluctuation of the noise process $\G(\cdot,\cdot)$ within the shaded region that dominates the relatively small fluctuation within the region in the red dashed line; see the right panel in Figure \ref{fig:localization}. Indeed, it can be shown that for $\gamma_\ast <b+1$, the following small fluctuation holds with high probability: $\G(h_1,h_2^\ast)-\G(h_1,0)\geq -C_2\cdot \sqrt{c^{2-\gamma_\ast}\log c}$ for large $C_2>0$. The scaling $c^{2-\gamma_\ast}$ is the order of the area of the region within the red dashed line in the right panel of Figure \ref{fig:localization}. On the other hand, the large positive fluctuation $\max_{0\leq (h_1)_1\leq c^{-b}, 0\leq (h_1)_2\leq c^2} \G(h_1,0)\geq C_3\cdot \sqrt{c^{2-b}}$ holds for small $C_3>0$ with high probability (see the shaded area in the right panel of Figure \ref{fig:localization}). Therefore with high probability, for $c$ large,
\begin{align*}
n^{1/4}\big(\hat{f}_n(x_0)-f_0(x_0) \big)&\gtrsim \frac{C_3\sqrt{c^{2-b}}-C_2 \sqrt{c^{2-\gamma_\ast}\log c}}{(c^2+c)(c^{-b}+c^{-\gamma_\ast})}-C_1 c^2 \\
&\geq C_4 c^{(b/2)-1}-C_1 c^2.
\end{align*}
Now by choosing $b>6$ and $\gamma_\ast \in (b,b+1)$, the above display can only occur with small probability for large $c$ by (a), so $(h_2^\ast)_1>c^{-\gamma_\ast}$ with high probability.

Once the first step (\ref{ineq:proof_outline_1}) is completed, we may proceed with the second step and conclude that 
\begin{align*}
W_{n,c}& \equiv \max_{c^{-\gamma_\ast}\bm{1}\leq h_1 \leq c\bm{1}}\min_{c^{-\gamma_\ast}\bm{1}\leq h_2\leq c\bm{1}} \mathbb{U}_n(h_1,h_2)\\
&= \max_{c^{-\gamma_\ast}\bm{1}\leq h_1 \leq c\bm{1}}\min_{c^{-\gamma_\ast}\bm{1}\leq h_2\leq c\bm{1}} \bigg[n^{1/4}\bar{\xi}|_{[x_0-h_1 n^{-1/4},x_0+h_2 n^{-1/4}]}\\
&\qquad\qquad +n^{1/4}\big(\bar{f_0}|_{[x_0-h_1 n^{-1/4},x_0+h_2 n^{-1/4}]}-f_0(x_0)\big)\bigg]\\
&\rightsquigarrow \max_{c^{-\gamma_\ast}\bm{1}\leq h_1 \leq c\bm{1}}\min_{c^{-\gamma_\ast}\bm{1}\leq h_2\leq c\bm{1}} \bigg[\frac{\sigma \cdot \G(h_1,h_2)}{\prod_{k=1}^2 \big((h_1)_k+(h_2)_k\big)}\\
&\qquad\qquad+\frac{1}{2}\sum_{k=1}^2 \partial_k f_0(x_0)\big( (h_2)_k-(h_1)_k\big) \bigg]\\
& \equiv \max_{c^{-\gamma_\ast}\bm{1}\leq h_1 \leq c\bm{1}}\min_{c^{-\gamma_\ast}\bm{1}\leq h_2\leq c\bm{1}} \mathbb{U}(h_1,h_2)\equiv W_c.
\end{align*}
To establish the weak convergence in the above display, we need to establish weak convergence of the process $\mathbb{U}_n$ to $\mathbb{U}$ in $\ell^\infty([c^{-\gamma_\ast}\bm{1},c\bm{1}]\times [c^{-\gamma_\ast}\bm{1},c\bm{1}])$. Finite dimensional convergence follows immediately by the Taylor expansion in Assumption \ref{assump:smoothness}. Asymptotic equicontinuity will be verified by general tools developed for uniform central limit theorems for partial sum processes (essentially) in \cite{alexander1987central} and further developed in \cite{van1996weak}. Note that such asymptotic equicontinuity is possible as the max-min formula searches over rectangles, rather than upper and lower sets as for the least squares estimator.

The last step attempts at localizing the limit of $W\equiv W_\infty$ by showing that ${\tilde{h}_1},{\tilde{h}_2}$ are bounded away from $\infty$ and $0$ in probability, where $\tilde{h}_1,\tilde{h}_2$ are such that $W = \mathbb{U}(\tilde{h}_1,\tilde{h}_2)$. This problem can be viewed as the limit Gaussian analogue of the first step, and therefore shares a similar proof strategy as detailed above, but further simplifications are possible due to the exact Gaussian structure in the limit.

Finally we list the key properties of the partial sum and limit processes that are used in the proofs. For simplicity, we only consider fixed balanced lattice design with $\alpha_1=\ldots=\alpha_d=1$. Fix $\rho \in (0,1)$. Let $\omega_{n,\rho} \equiv n^{-\frac{1-\rho}{1+d(1-\rho)} }$, $r_{n,\rho}\equiv \omega_{n,\rho}\bm{1}$ and for any $h_1,h_2 \in \R_{\geq 0}^d$, let 
\begin{align*}
\G_{n,\rho}(h_1,h_2)\equiv  \omega_{n,\rho}^{-1}(n\omega_{n,\rho}^d)^{-1} \sum_{i: x_0-h_1 r_{n,\rho}\leq X_i\leq x_0+h_2 r_{n,\rho}}\xi_i.
\end{align*} 
Suppose the following properties hold:
\begin{enumerate}
	\item[(P1)] $\G_{n,\rho} \rightsquigarrow \G_\rho$ in $\ell^\infty([0,c\bm{1}]\times [0,c\bm{1}])$ for any $c>0$, and the limit process $\G_\rho(\cdot,\cdot)$ is separable and self-similar with index $\rho \in (0,1)$ in the sense that for any  $\gamma\in \R_{\geq 0}^d$, $
	\G_\rho(\gamma \cdot,\gamma \cdot)=_d \big(\prod_{k=1}^d \gamma_k\big)^\rho\cdot \G_\rho(\cdot,\cdot)$.
	\item[(P2)] It holds that
	\begin{align*}
	\sup_{h>0} \frac{\abs{\G_{n,\rho}(h,\bm{1})}\vee \abs{\G_{n,\rho}(\bm{1},h)} }{\prod_{k=1}^d (h_k+1)} +\sup_{h>0} \frac{\abs{\G_{\rho}(h,\bm{1})}\vee \abs{\G_\rho(\bm{1},h)} }{\prod_{k=1}^d (h_k+1)} = \mathcal{O}_{\mathbf{P}}(1).
	\end{align*}
	\item[(P3)] $\Prob\big(\sup_{0\leq h\leq \bm{1}} \G_\rho(h,0)\leq 0\big)=\Prob\big(\sup_{0\leq h\leq \bm{1}} \G_\rho(0,h)\leq 0\big)=0$.
\end{enumerate}
Then
\begin{align*}
\omega_{n,\rho}^{-1}\big(\hat{f}_n(x_0)-f_0(x_0)\big)\rightsquigarrow K_\rho(f_0,x_0)\cdot \mathbb{D}_{d,\rho},
\end{align*}
where $K_\rho(f_0,x_0) \equiv \big\{\prod_{k=1}^d \big(\partial_k f_0(x_0)/2\big)\big\}^{\frac{1-\rho}{1+d(1-\rho)}}$ and
\begin{align*}
\mathbb{D}_{d,\rho}\equiv \sup_{h_1>0}\inf_{h_2>0} \bigg[\frac{\G_\rho(h_1,h_2)}{\prod_{k=1}^d \big((h_1)_k+(h_2)_k\big)}+\sum_{k=1}^d\big( (h_2)_k-(h_1)_k\big) \bigg].
\end{align*}
Some comments on the properties (P1)-(P3): 
\begin{itemize}
	\item (P1) requires a functional limit theory for the process $\G_{n,\rho}$ to its limit $\G_\rho$ over all compact rectangles. The self-similarity index $\rho \in (0,1)$ reflects the dependence structure within the errors $\{\xi_i\}$. In the i.i.d. case as considered in this paper, $\rho = 1/2$. 
	\item (P2) requires some uniform control of the moduli of the processes $\G_{n,\rho}, \G_\rho$. That the first term in (P2) being stochastically bounded can also be written as
	\begin{align*}
	\sup_{h>0}\omega_{n,\rho}^{-1}\big(\abs{ \bar{\xi}|_{[x_0-hr_{n,\rho},x_0+r_{n,\rho}]}}\vee \abs{ \bar{\xi}|_{[x_0-r_{n,\rho},x_0+hr_{n,\rho}]}} \big)&= \mathcal{O}_{\mathbf{P}}(1).
	\end{align*}
	For i.i.d. errors, this can established by martingale properties of the partial sum process. The stochastic boundedness of the second term in (P2) can be verified by good tail estimates on $\G_\rho$.
	\item (P3) is a regularity condition on the limit process $\G_\rho$. In the univariate case with i.i.d. errors, this can be easily verified by the reflection principle of Brownian motion.
\end{itemize}  
It is also possible to consider, with substantially increased technicalities, more general assumptions on the local smoothness of $f_0$ at $x_0$ and the design points as in Assumptions \ref{assump:smoothness}-\ref{assump:design}, but we shall omit these details here.

\subsection{Outline for the proof of Theorem \ref{thm:minimax_lower_bound}}

The basic minimax machinery we use is the following. 
\begin{proposition}\label{prop:minimax}
	Suppose that the errors $\xi_i$'s are i.i.d. $\mathcal{N}(0,\sigma^2)$. Let $f_n$ be such that $
	n\ell_2^2(f_n,f_0)\leq\alpha$ and $\abs{f_n(x_0)-f_0(x_0)}\geq \gamma_n$.
	Then
	\begin{align*}
	\inf_{\tilde{f}_n} \sup_{ f\in \{f_n,f_0\} } \E_{f} \big[ \abs{\tilde{f}_n(x_0)-f(x_0)}\big]\geq \frac{\gamma_n}{8} \exp\bigg(-\frac{\alpha}{2\sigma^2}\bigg).
	\end{align*}
	The $\ell_2$ metric is defined in the statement of Theorem \ref{thm:minimax_lower_bound}.
\end{proposition}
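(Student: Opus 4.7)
The plan is to execute the standard Le Cam two-point testing reduction. First, I would translate the estimation problem at $x_0$ into a binary hypothesis test. Given any estimator $\tilde f_n$, define the test $\psi \equiv \mathbf{1}\{|\tilde f_n(x_0)-f_n(x_0)| < |\tilde f_n(x_0)-f_0(x_0)|\}$. On $\{\psi=1\}$ the triangle inequality together with $|f_n(x_0)-f_0(x_0)|\geq \gamma_n$ forces $|\tilde f_n(x_0)-f_0(x_0)|\geq \gamma_n/2$, and symmetrically on $\{\psi=0\}$ we get $|\tilde f_n(x_0)-f_n(x_0)|\geq \gamma_n/2$. Markov's inequality then yields
\begin{align*}
\sup_{f\in\{f_0,f_n\}}\mathbb{E}_f\big[|\tilde f_n(x_0)-f(x_0)|\big]
&\geq \tfrac{1}{2}\big(\mathbb{E}_{f_0}|\tilde f_n(x_0)-f_0(x_0)|+\mathbb{E}_{f_n}|\tilde f_n(x_0)-f_n(x_0)|\big)\\
&\geq \tfrac{\gamma_n}{4}\big(\mathbb{P}_{f_0}(\psi=1)+\mathbb{P}_{f_n}(\psi=0)\big).
\end{align*}

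Second, I would bound the sum of testing errors from below via the standard inequality $\mathbb{P}_{f_0}(\psi=1)+\mathbb{P}_{f_n}(\psi=0)\geq \int \min(\mathrm{d}\mathbb{P}_{f_0},\mathrm{d}\mathbb{P}_{f_n})$, and then invoke the exponential form of Le Cam's lemma (a direct consequence of Jensen applied to $x\mapsto -\log x$, as in Tsybakov's monograph),
\begin{align*}
\int \min(\mathrm{d}\mathbb{P}_{f_0},\mathrm{d}\mathbb{P}_{f_n})\geq \tfrac{1}{2}\exp\big(-\mathrm{KL}(\mathbb{P}_{f_0}\|\mathbb{P}_{f_n})\big).
\end{align*}

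Third, I would compute the KL divergence. Under the Gaussian regression model (\ref{model}) with $\xi_i\sim \mathcal{N}(0,\sigma^2)$ i.i.d., the fixed design case gives $\mathrm{KL}(\mathbb{P}_{f_0}\|\mathbb{P}_{f_n})=\frac{1}{2\sigma^2}\sum_{i=1}^n (f_n(X_i)-f_0(X_i))^2 = \frac{n}{2\sigma^2}\ell_2^2(f_n,f_0)$. In the random design case the $X_i$ have the same marginal under both hypotheses, so by the chain rule the same identity holds with $\ell_2^2(f_n,f_0)= P(f_n-f_0)^2$. In either case the hypothesis $n\ell_2^2(f_n,f_0)\leq \alpha$ yields $\mathrm{KL}\leq \alpha/(2\sigma^2)$. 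Combining the three displays,
\begin{align*}
\inf_{\tilde f_n}\sup_{f\in\{f_0,f_n\}}\mathbb{E}_f\big[|\tilde f_n(x_0)-f(x_0)|\big]\geq \tfrac{\gamma_n}{8}\exp\big(-\alpha/(2\sigma^2)\big),
\end{align*}
as desired.

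There is no real obstacle here: the result is a textbook Le Cam two-point lower bound, and the only point requiring minor care is that the exponential-in-KL form (rather than the more commonly quoted Pinsker bound $\mathrm{TV}\le \sqrt{\mathrm{KL}/2}$) must be invoked in order to reproduce the factor $\exp(-\alpha/(2\sigma^2))$ on the right-hand side. This exponential version is exactly what allows the subsequent main application (Theorem \ref{thm:minimax_lower_bound}) to absorb the radius $\tau$ of the $\ell_2$-ball into the constant after taking a supremum over $\tau>0$.
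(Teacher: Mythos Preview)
Your proof is correct and follows essentially the same approach as the paper: both execute the Le Cam two-point reduction, compute the Gaussian KL divergence as $\frac{n}{2\sigma^2}\ell_2^2(f_n,f_0)$, and invoke the exponential-in-KL lower bound. The paper is simply terser, passing through the probability-of-error quantity $p_{e,1}$ and citing Theorem~2.2 of Tsybakov directly, whereas you spell out the test construction and the chain of inequalities that Theorem~2.2 encapsulates.
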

Results of this type are well-known in the context of density estimation \cite{groeneboom1995isotonic,jongbloed2000minimax}, which can also be viewed a special case of minimax reduction scheme with two hypothesis (cf. \cite{tsybakov2008introduction}). We provide a (short) proof in Section \ref{section:proof_minimax} in the context of regression for the convenience of the reader. 

Now the problem reduces to that of finding a permissible perturbation $f_n$. By the second step in the outline for the proof of Theorem \ref{thm:limit_distribution_pointwise}, the constants concerning the unknown regression function in the limiting distribution essentially come from the local Taylor expansion, so it is tempting to consider the local perturbation function of the form
\begin{align*}
\tilde{f}_n(x)=
\begin{cases}
f_0(x_0-hr_n) & \textrm{ if } x \in [x_0-hr_n, x_0],\\
f_0(x) & \textrm{ otherwise},
\end{cases}
\end{align*}
with a good choice of $h \in \R^d$. The complication arises from the fact that $\tilde{f}_n \notin \mathcal{F}_d$ in general, so suitable modifications are needed for a valid construction of $f_n$. Details can be found in Section \ref{section:proof_minimax}.

\section{Discussion and final remarks}\label{section:discussion}

   We developed in this paper pointwise limiting distribution theory for the max-min block estimator (\ref{def:max_min_estimator}) under local smoothness conditions at a fixed point, and considered both fixed lattice and random designs. One important question is how to use this limit distribution theory for inference. The common bootstrap is known to be inconsistent in a closely related univariate monotone density estimation problem \cite{kosorok2008bootstrapping,sen2010inconsistency}, so simple bootstrap procedures are unlikely to succeed here as well. It is an important yet non-trivial task to develop a tuning-free inference method for the multiple isotonic regression model (\ref{model}), even with the limiting distribution theory developed in this paper. A detailed study for the inference problem will therefore be pursued elsewhere. 
	
	It should however be mentioned that the limit distribution theory is foundational for further developments of inference methods. In the univariate isotonic regression, the limit distribution theory \cite{brunk1970estimation,wright1981asymptotic} is essential both in terms of the results and the proof techniques for the popular inference procedure based on likelihood ratio test developed much later (cf. \cite{banerjee2001likelihood,banerjee2007likelihood}). We expect that the results and proof techniques in this paper will also be useful in this and other directions. However, such developments would not be parallel to the univariate analysis based on the linearity of the monotonicity relationship graph.

    From another angle, the limiting distribution theory developed in this paper differs markedly from the univariate cases where usually maximum likelihood/least squares estimators are studied. The common difficulty for these MLE/LSEs in higher dimensions is that the underlying `empirical process' (= partial sum process in (\ref{def:max_min_estimator}) in our setting) is not tight in the large sample limit, so it is hard to obtain limit distribution theories for these estimators. Our results and techniques can therefore also be viewed as a compliment to the extensive literature on the limit theories for univariate MLE/LSEs.

\section{Proof of Theorem \ref{thm:limit_distribution_pointwise}}\label{section:proof_limit_dist}

Let $\epsilon_0>0$ be such that $f_0$ is (sufficiently) differentiable on the rectangle $[x_0-\epsilon_0\bm{1},x_0+\epsilon_0\bm{1}]$. Recall $\kappa_\ast =1$ in the random design case, and $\omega_n \in \R, r_n\in \R^d$ are defined in (\ref{def:r_n}). Let $d_\ast \equiv d-\kappa_\ast+1$ and $s_\ast \equiv s-\kappa_\ast+1$. We often omit the requirement that $[x_u,x_v]\cap \{X_i\}\neq \emptyset$ in (\ref{def:max_min_estimator}) for notational simplicity. In the random design setting, we assume that $P$ is the uniform distribution on $[0,1]^d$ to avoid unnecessary notational digressions; then the covariance structure of $\G$ is given by the simplified expression (\ref{eqn:cov_G_simple}).

In the sequel, we consider separately the cases for $1\leq \kappa_\ast<s+1$, and $\kappa_\ast = s+1$. In the former case, there is at least one non-trivial term in the Taylor expansion of $f_0$ at $x_0$, so the problem of limiting distribution is essentially local. In the latter case, since there is only noise present, so the problem is non-local.

\subsection{Local rate of convergence}
In this subsection we establish the local rate of convergence for $\hat{f}_n(x_0)$. This corresponds to step (a) in Section \ref{section:proof_outline_thm1}.

\begin{proposition}\label{prop:rate_estimator}
	Let $1\leq \kappa_\ast<s+1$. Assume the same conditions as in Theorem \ref{thm:limit_distribution_pointwise}. Then $
	\hat{f}_n(x_0)-f_0(x_0) = \mathcal{O}_{\mathbf{P}}(\omega_n)$.
\end{proposition}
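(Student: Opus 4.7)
My plan is to exploit the max-min representation of $\hat{f}_n(x_0)$ to sandwich it between two fixed-rectangle averages whose deviation from $f_0(x_0)$ can be decomposed into a bias piece (handled via the local Taylor expansion in Assumption~\ref{assump:smoothness}) and a noise piece (handled by a maximal inequality for partial-sum averages over rectangles). The hard part will be the uniform noise control in Step~4 below, as the family of admissible rectangles is large and we must avoid logarithmic losses.

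First I would fix constants $c_1, c_2 > 0$ and deterministic vectors $x_u^0 = x_0 - h_1^0 r_n$, $x_v^0 = x_0 + h_2^0 r_n$ with $(h_i^0)_k = c_i$ for $\kappa_\ast \leq k \leq s$ and $(h_i^0)_k$ a small fixed constant for $s < k \leq d$, chosen so that $x_u^0, x_v^0 \in (0,1)^d$ for all large $n$. Definition (\ref{def:max_min_estimator}) then yields the sandwich
\[
\min_{x_v \geq x_0} \bar{Y}|_{[x_u^0, x_v]} \;\leq\; \hat{f}_n(x_0) \;\leq\; \max_{x_u \leq x_0} \bar{Y}|_{[x_u, x_v^0]},
\]
reducing the task to showing that both sides deviate from $f_0(x_0)$ by only $\mathcal{O}_{\mathbf{P}}(\omega_n)$.

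Next I would split each side into bias and noise via $\bar{Y}|_A - f_0(x_0) = \bar{\xi}|_A + (\bar{f_0}|_A - f_0(x_0))$. For the upper sandwich, coordinate-wise isotonicity of $f_0$ gives $\bar{f_0}|_{[x_u, x_v^0]} \leq f_0(x_v^0)$ uniformly in $x_u \leq x_0$, and combining Assumption~\ref{assump:smoothness} with Lemma~\ref{lem:mixed_derivative_vanish}(2) yields $f_0(x_v^0) - f_0(x_0) \leq L\omega_n$ for some $L = L(c_2, f_0, x_0)$. A symmetric lower bias bound $\bar{f_0}|_{[x_u^0, x_v]} - f_0(x_0) \geq -L'\omega_n$ holds in parallel for the lower sandwich, using monotonicity of $f_0$ in the opposite direction. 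This dispatches the bias uniformly on both sides by virtue of $c_1, c_2$ being of constant order.

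The main obstacle is then the noise term $M_n \equiv \sup_{x_u \leq x_0} |\bar{\xi}|_{[x_u, x_v^0]}|$ (and its symmetric counterpart), which I would show satisfies $M_n = \mathcal{O}_{\mathbf{P}}(\omega_n)$. The heuristic is that the smallest admissible rectangle (with $x_u$ close to $x_0$ in the effective coordinates $\kappa_\ast \leq k \leq s$, extending fully to the boundary in the ineffective coordinates $k < \kappa_\ast$, and of constant $r_n$-scale for $k > s$) contains $\asymp n_\ast \omega_n^{\sum_{k=\kappa_\ast}^s \alpha_k^{-1}} = \omega_n^{-2}$ design points, producing standard deviation $\asymp \sigma \omega_n$; larger rectangles only shrink this. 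To control the supremum I would peel dyadically over $x_u$ in the effective coordinates, applying on each shell either a Doob-type maximal inequality that exploits the reverse-martingale structure of $\bar{\xi}|_A$ as $A$ enlarges, or a Bernstein tail bound combined with a union bound over a finite discretization compatible with the lattice (or with a covering of the rectangle parameter space in the random-design case). Geometric decay of the per-shell variances allows summation over dyadic scales without logarithmic loss, delivering $M_n = \mathcal{O}_{\mathbf{P}}(\omega_n)$. In the random-design case, a preliminary concentration step using Bernstein's inequality ensures the random counts $n_A$ track their expectations uniformly over the dyadic shells, after which the argument proceeds as in the fixed lattice case. Combining the bias and noise estimates on both sandwich sides yields $\hat{f}_n(x_0) - f_0(x_0) = \mathcal{O}_{\mathbf{P}}(\omega_n)$.
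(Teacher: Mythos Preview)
Your proposal is correct and follows essentially the same route as the paper: the paper also plugs a fixed rectangle endpoint $x_0+\tau r_n$ into the max-min formula, bounds the bias by monotonicity plus the Taylor expansion in Assumption~\ref{assump:smoothness}, and controls the noise supremum $\sup_{h_1>0}\lvert\bar{\xi}|_{[x_0-h_1 r_n,\,x_0+\tau r_n]}\rvert$ via a dyadic peeling argument (their Lemma~\ref{lem:size_max_partial_sum}, whose proof uses exactly the geometric summation over dyadic shells and a partial-sum maximal inequality that you outline in Step~4). Your description of the noise control---reverse-martingale/Doob structure in the lattice case, Bernstein concentration of counts in the random-design case, followed by geometric summation---is precisely what underlies that lemma.
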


We need the following to control the contribution of the noise.

\begin{lemma}\label{lem:size_max_partial_sum}
	Assume the same conditions as in Theorem \ref{thm:limit_distribution_pointwise}. Let $r_n$ be as in (\ref{def:r_n}). Then for any fixed $\tau>0$, in both fixed lattice and random design cases, we have $
	\sup_{h>0} \abs{\bar{\xi}|_{[x_0-h r_n,x_0+\tau r_n]}} = \mathcal{O}_{\mathbf{P}}(\omega_n)$.
\end{lemma}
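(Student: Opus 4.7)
The plan is to isolate a scale on which the denominator $n(h) \equiv |\{i: X_i \in B_h\}|$ is uniformly bounded below by a quantity of order $1/\omega_n^2$, and then bound the normalized noise average by a maximal inequality for reverse martingales. Here $B_h \equiv [x_0 - hr_n,\, x_0 + \tau r_n]$ and $V_n(h) \equiv \sum_{X_i \in B_h}\xi_i$, so that $\bar{\xi}|_{B_h} = V_n(h)/n(h)$.

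First I would observe that $B_h$ always contains the base rectangle $[x_0,\, x_0+\tau r_n]$, so $n(h)$ is bounded below by the count in that base rectangle. In the fixed lattice case, this base count is $n_0 \asymp \tau^{d_\ast} \prod_{k=\kappa_\ast}^d n_k (r_n)_k = \tau^{d_\ast} n_\ast \omega_n^{\sum_{k=\kappa_\ast}^s \alpha_k^{-1}}$; using the definition of $\omega_n$ in (\ref{def:r_n}), a direct calculation gives $1/\sqrt{n_0} \asymp \omega_n$. By the characterization of $\kappa_\ast$ in Proposition \ref{prop:comparision_local_rates}(2), each factor $n_k (r_n)_k \geq 1$ for $k \geq \kappa_\ast$, so the product is genuinely of this order. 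In the random design case (where $\kappa_\ast = 1$), $n(h)$ is a binomial count concentrating around $n\,\pi(x_0)\,\mathrm{vol}(B_h)$, and a uniform Chernoff estimate over the VC class of axis-aligned rectangles gives $n(h) \geq n_0$ for all $h$ with high probability.

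The claim then reduces to showing $\sup_h |V_n(h)|/\sqrt{n(h)} = \mathcal{O}_{\mathbf{P}}(1)$. Here I would exploit the reverse-martingale structure of normalized iid partial sums: for any fixed ordering of the design points compatible with the inclusion order of the $B_h$'s, $(S_m/m)_m$ with $S_m = \sum_{j=1}^m \xi_{(j)}$ is a reverse martingale, and Doob's $L^2$ maximal inequality gives $\mathbb{E} \sup_{m \geq n_0}(S_m/m)^2 \lesssim \sigma^2/n_0$. The multi-parameter supremum over $h \in \mathbb{R}^{d_\ast}_{>0}$ is handled by iterating the one-parameter bound coordinatewise, exploiting that the family $\{B_h\}$ is monotone in each component of $h$; iterating over $d_\ast$ coordinates incurs only a prefactor depending on $d_\ast$.

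The main obstacle is avoiding a spurious $\sqrt{\log n}$ factor. A naive sub-Gaussian union bound over the polynomially many distinct shapes of $B_h$ produces such a factor, which would be fatal because $\omega_n \sqrt{\log n} \gg \omega_n$. The reverse-martingale $L^2$ inequality is essential precisely because it controls the supremum directly through the terminal second moment rather than via a union bound over individual sub-Gaussian random variables. This is the multi-dimensional analogue of the familiar fact that $\sup_{t \geq c}|W(t)/t|$ is bounded in probability for a Brownian motion $W$ and any fixed $c>0$, which underlies the analogous step in the one-dimensional Groeneboom-style theory.
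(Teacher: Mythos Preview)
Your reduction step is misstated: the quantity $\sup_h |V_n(h)|/\sqrt{n(h)}$ is \emph{not} $\mathcal{O}_{\mathbf{P}}(1)$ in general---in one dimension this is $\sup_{n_0\le m\le n}|S_m|/\sqrt{m}$, which diverges like $\sqrt{\log\log n}$ by the law of the iterated logarithm. Fortunately this intermediate claim is both false and unnecessary: your reverse-martingale argument for the \emph{averages} $S_m/m$ gives $\E\sup_{m\ge n_0}(S_m/m)^2\lesssim\sigma^2/n_0$ directly, and since $1/\sqrt{n_0}\asymp\omega_n$ this is exactly the original statement. So drop the sentence about $|V_n(h)|/\sqrt{n(h)}$ and go straight from the denominator bound to the reverse-martingale maximal inequality for $V_n(h)/n(h)$.

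With that correction your route is sound but genuinely different from the paper's. The paper peels dyadically in $h$: on each shell $\{2^{\ell_k}\le h_k\le 2^{\ell_k+1}\}$ the denominator is frozen at order $2^{\sum_k\ell_k}/\omega_n^2$, the numerator's expected supremum is bounded via a forward multi-parameter Doob inequality (their Lemma~2 of \cite{deng2018isotonic}) by the $L^2$ norm at the largest corner, and the resulting geometric series $\sum_{\ell}2^{-\sum_k\ell_k/2}$ converges. Your approach instead exploits the reverse-martingale structure of block averages globally, iterating Cairoli-style over coordinates; this avoids the peeling and is closer in spirit to the one-dimensional Brownian-motion heuristic you mention. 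The paper's decomposition has the advantage that in the random-design case it dovetails cleanly with the ratio control of Lemma~\ref{lem:ratio_indicator} (the event $\mathcal{E}_n$ on which $\Prob_n/P$ is uniformly close to~$1$), whereas your Chernoff-over-VC step and the conditional reverse-martingale argument need a bit more care to mesh together. Both approaches ultimately rest on a multi-parameter $L^2$ maximal inequality, so neither is more elementary; yours is more direct, the paper's is more modular.
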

\begin{proof}
See Section \ref{section:proof_lemmas}.
\end{proof}

\begin{proof}[Proof of Proposition \ref{prop:rate_estimator}]
Let $x_u^\ast$ be such that $\hat{f}_n(x_0) =  \min_{x_v\geq x_0} \bar{Y}|_{[x_u^\ast,x_v]}$. Fix a small enough $\tau>0$. Since for $n$ large, $[x_0,x_0+\tau r_n]\cap \{X_i\}\neq \emptyset$ holds in the fixed lattice design case, and with probability tending to one in the random design case, using the max-min formula we have
\begin{align}\label{ineq:local_rate_1}
\hat{f}_n(x_0) & \leq \bar{f_0}|_{[x_u^\ast,x_0+\tau r_n]}+\bar{\xi}|_{[x_u^\ast,x_0+\tau r_n]}.
\end{align}
In both fixed lattice and random design cases, by monotonicity of $f_0$, we have for $n$ large enough,
\begin{align}\label{ineq:local_rate_2}
\bar{f_0}|_{[x_u^\ast,x_0+ \tau r_n]}-f_0(x_0)&\leq f_0(x_0+\tau r_n)-f_0(x_0)\\
&= \sum_{\bm{j} \in J_\ast} \frac{\partial^{\bm{j}} f_0(x_0) }{\bm{j}!}(1+\mathfrak{o}(1)) (\tau r_n)^{\bm{j}}  = \mathcal{O}(\omega_n).\nonumber 
\end{align}
On the other hand, in both fixed lattice and random design cases, Lemma \ref{lem:size_max_partial_sum} entails that 
\begin{align}\label{ineq:local_rate_3}
\sup_{h_1>0} \abs{\bar{\xi}|_{[x_0-h_1 r_n,x_0+\tau r_n]}} = \mathcal{O}_{\mathbf{P}}(\omega_n).
\end{align}
The one-sided claim follows by combining (\ref{ineq:local_rate_1})-(\ref{ineq:local_rate_3}). The other direction follows from similar arguments. 
\end{proof}

\subsection{Localizing the estimator}

In this subsection, we tackle the large and small deviation problems, i.e. steps (b)-(c), as described in Section \ref{section:proof_outline_thm1}.

\begin{proposition}\label{prop:localize_estimator_range}
	Let $1\leq \kappa_\ast<s+1$. Assume the same conditions as in Theorem \ref{thm:limit_distribution_pointwise} and $\kappa_\ast$ is unique. Then
	\begin{align*}
	\lim_{c \to \infty} \limsup_{n \to \infty} \Prob\big(\hat{f}_n(x_0)\neq \hat{f}_{n,c}(x_0)\big)=0.
	\end{align*}
	In other words, $
	\lim_{c \to \infty} \limsup_{n \to \infty} \Prob(W_{n,c}\neq W_n)=0.$
\end{proposition}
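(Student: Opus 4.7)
The plan is to prove that the pair $(h_1^\ast, h_2^\ast)$ achieving the max-min in (\ref{def:h_1_h_2}) lies in $[c^{-\gamma_\ast}\bm{1}_{[\kappa_\ast:d]}, c\bm{1}_{[\kappa_\ast:d]}]^2$ with probability $1 - o_c(1)$ as $n \to \infty$, because on that event the restricted max-min (\ref{def:local_max_min}) attains the same value, hence $\hat f_n(x_0) = \hat f_{n,c}(x_0)$. Coordinates $k < \kappa_\ast$ are trivial since $(r_n)_k = 0$ makes the block independent of $(h_i)_k$, while coordinates $k \in (s, d]$ can be handled by similar but simpler arguments using the vanishing bias in those directions. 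The non-trivial case is $k_0 \in [\kappa_\ast, s]$; by the (sign-flipped) symmetry between the roles of $h_1$ and $h_2$ in the max-min formula, I reduce to bounding $(h_2^\ast)_{k_0}$ from above by $c$ and from below by $c^{-\gamma_\ast}$, the argument for $h_1^\ast$ being analogous. Proposition \ref{prop:rate_estimator} furnishes the oracle rate against which both deviation estimates produce contradictions.

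For the large deviation step, if $(h_2^\ast)_{k_0} > c$, then plugging $h_1 = \tau \bm{1}_{[\kappa_\ast:d]}$ into the max-min formula for any fixed $\tau > 0$ yields
\begin{align*}
\hat f_n(x_0) - f_0(x_0) \geq \bigl(\bar f_0|_{B^\ast} - f_0(x_0)\bigr) + \bar\xi|_{B^\ast}, \quad B^\ast \equiv [x_0 - \tau r_n, x_0 + h_2^\ast r_n].
\end{align*}
The Taylor expansion in Assumption \ref{assump:smoothness}, together with Lemma \ref{lem:mixed_derivative_vanish}(1) and the coordinate-wise monotonicity of $f_0$, forces the bias to exceed a positive constant multiple of $c^{\alpha_{k_0}} \omega_n$; Lemma \ref{lem:size_max_partial_sum} controls the noise uniformly by $\mathcal{O}_{\mathbf{P}}(\omega_n)$. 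Combined with the rate $\hat f_n(x_0) - f_0(x_0) = \mathcal{O}_{\mathbf{P}}(\omega_n)$ from Proposition \ref{prop:rate_estimator}, this is incompatible for $c$ large enough.

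For the small deviation step---the principal obstacle---suppose $(h_2^\ast)_{k_0} < c^{-\gamma_\ast}$. Pick auxiliary exponents $0 < b < \gamma_\ast < b+1$ with $b$ sufficiently large (depending on $\bm\alpha$ and, in the fixed-lattice case, $\bm\beta$). Using the max-over-$h_1$ clause of the max-min formula with $h_1$ ranging over an enlarged rectangle $A_c \subset \R_{\geq 0}^{d_\ast}$ of width $c^{-b}$ in coordinate $k_0$ and width $c^2$ in the remaining effective coordinates, I would derive a lower bound of the form
\begin{align*}
\omega_n^{-1}\bigl(\hat f_n(x_0) - f_0(x_0)\bigr) \gtrsim \max_{h_1 \in A_c} \frac{Z_n(h_1, h_2^\ast)}{\prod_{k=\kappa_\ast}^{d} ((h_1)_k + (h_2^\ast)_k)} - C_{f_0, x_0}\, c^{A},
\end{align*}
where $Z_n(h_1, h_2)$ denotes the partial-sum process of the $\xi_i$'s over $[x_0 - h_1 r_n, x_0 + h_2 r_n]$ in the normalization of Section \ref{section:proof_outline_thm1}, and $A = A(\bm\alpha)$ is an explicit Taylor-expansion exponent. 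Three ingredients drive the analysis: (i) a positive-fluctuation lower bound on $\max_{h_1 \in A_c} Z_n(h_1, 0)/\prod_{k} (h_1)_k$ of polynomial order $c^{\delta(b)}$, obtained via partial-sum maximal inequalities; (ii) a matching small-fluctuation upper bound on the increment $Z_n(h_1, h_2^\ast) - Z_n(h_1, 0)$ (suitably normalized), shown to be sub-dominant to (i) because the thin slab of relative volume $c^{-\gamma_\ast}$ has asymptotically negligible variance under the calibration $\gamma_\ast > b$; and (iii) the polynomial bias bound. With $b$ large enough, the net noise beats $c^{A}$, yielding $\omega_n^{-1}(\hat f_n(x_0) - f_0(x_0)) \to \infty$ in probability, contradicting Proposition \ref{prop:rate_estimator}. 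The main technical hurdle is calibrating $b$, $\gamma_\ast$, and the shape of $A_c$ uniformly in the coordinate direction $k_0$ when $\bm\alpha$ and $\bm\beta$ are imbalanced, and establishing (i)-(ii) uniformly over the direction of enlargement; in the random-design case these additionally require empirical-process bounds over rectangles with one shrinking side.
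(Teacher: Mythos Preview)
Your overall architecture---large deviation then small deviation, each producing a contradiction with Proposition~\ref{prop:rate_estimator}---matches the paper exactly, and your calibration of the small-deviation exponents $(b,\gamma_\ast)$ together with the three ingredients (i)--(iii) is essentially what the paper does (via Lemmas~\ref{lem:size_difference_gaussian_small_dev} and~\ref{lem:small_deviation_min_max}).

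There is, however, one genuine gap. You write that coordinates $k<\kappa_\ast$ are ``trivial since $(r_n)_k=0$ makes the block independent of $(h_i)_k$''. This is circular: $(r_n)_k=0$ is a \emph{choice of parametrization}, and the very existence of $(h_1^\ast,h_2^\ast)$ in~(\ref{def:h_1_h_2}) presupposes that the unconstrained max-min in~(\ref{def:max_min_estimator}) is achieved at a block that does not extend in directions $k<\kappa_\ast$. This has to be proved, and is the paper's Step~1a (fixed lattice only). The argument is another bias-versus-noise comparison, but it hinges on the strict inequality $\omega_n^{1/\alpha_\ell} n^{\beta_\ell}<1$ for $\ell<\kappa_\ast$ (Proposition~\ref{prop:comparision_local_rates}), i.e.\ the lattice spacing exceeds the localization scale, so that moving even one lattice point produces a bias of order $\omega_n\cdot n^\delta$ for some $\delta>0$. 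This is precisely where the \emph{uniqueness} of $\kappa_\ast$ assumed in the proposition enters; your sketch does not use that hypothesis anywhere.

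A smaller point on the small-deviation step: your ingredient~(i) is phrased as a lower bound on $\max_{h_1\in A_c} Z_n(h_1,0)$, but $h_2^\ast$ is random and retains nontrivial components in the coordinates other than $k_0$ (it can be as large as $c$ there, by Step~1). Replacing $h_2^\ast$ by $0$ everywhere would make the ``thin slab'' increment in~(ii) too large. What you actually need---and what the paper proves via Lemma~\ref{lem:small_deviation_min_max}---is a \emph{min-max} statement: for every admissible $h_2$ (with $(h_2)_{k_0}=0$), the maximum over $h_1\in A_c$ is bounded below. Your outline would be repaired by stating~(i) in this min-max form and noting that the increment~(ii) must be controlled uniformly over $h_2$ in the localized box.
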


\begin{proof}
\noindent\textbf{(Step 1: the large deviation problem).}  In this step we handle the large deviation problem. For the proof in this step only, define $r_n \equiv (\omega_n^{1/\alpha_1},\ldots,\omega_n^{1/\alpha_d})$ for notational convenience. Let $h_1^\ast,h_2^\ast$ be defined as in (\ref{def:h_1_h_2}) but using the current $r_n$, and 
\begin{align}\label{def:H_c}
H_c \equiv \big\{h\geq 0: h_\ell =0, 1\leq \ell \leq \kappa_\ast-1, \max_{\kappa_\ast \leq k\leq d} h_k\leq c\big\}.
\end{align} We will show that
\begin{align}\label{ineq:localize_estimator_0}
\lim_{c \to \infty} \limsup_{n \to \infty} \Prob\big(h_1^\ast \notin H_c\big)\vee \Prob\big(h_2^\ast \notin H_c\big)=0.
\end{align}
We only show this for $\Prob\big(h_2^\ast \notin H_c\big)$; the situation for $h_1^\ast$ is similar.

For any $h_2^\ast $, let $
\bar{h}_2^\ast \equiv h_2^\ast\wedge (\epsilon_0 (r_n)_1^{-1},\ldots,\epsilon_0(r_n)_d^{-1})$. Clearly $\bar{h}_2^\ast\leq h_2^\ast$. Then by the max-min formula and the monotonicity of $f_0$, in the fixed lattice design case it holds for any $h_1\geq 0$ that
\begin{align}\label{ineq:localize_estimator_1}
\hat{f}_n(x_0)&\geq \bar{f_0}|_{[x_0-h_1r_n,x_0+h_2^\ast r_n]}+\bar{\xi}|_{[x_0-h_1r_n,x_0+h_2^\ast r_n]}\\
&\geq \bar{f_0}|_{[x_0-h_1r_n,x_0+\bar{h}_2^\ast r_n]}+\bar{\xi}|_{[x_0-h_1r_n,x_0+h_2^\ast r_n]}.\nonumber
\end{align}
\noindent\textbf{(Step 1a: fixed design, effective dimension).}  First we will show that we may take $(h_2^\ast)_\ell = 0$ for $1\leq \ell\leq \kappa_{\ast}-1$ with high probability as $n \to \infty$. Note this is only for the fixed lattice design case. Since we have a lattice design, we only need to show that $(h_2^\ast)_\ell < n^{-\beta_\ell}(r_n)_\ell^{-1}$ for $1\leq \ell\leq \kappa_{\ast}-1$. On the event $\cup_{1\leq \ell\leq \kappa_\ast-1} \{(h_2^\ast)_\ell\geq n^{-\beta_\ell}(r_n)_\ell^{-1}\}$, we have by Lemma \ref{lem:taylor_expansion} that for $n$ large enough,
\begin{align*}
\bar{f_0}|_{[x_0-r_n\bm{1}_{[\kappa_\ast:d]},x_0+\bar{h}_2^\ast r_n]}-f_0(x_0)\gtrsim \omega_n \max_{1\leq \ell \leq \kappa_\ast-1} (h_2^\ast)_\ell^{\alpha_\ell}\geq  \omega_n\cdot n^{\delta},
\end{align*}
where the last inequality follows from Proposition \ref{prop:comparision_local_rates} that $\min_{1\leq \ell \leq \kappa_\ast-1} n^{-\beta_\ell}(r_n)_\ell^{-1}  = n^\delta$ for some $\delta>0$. On the other hand, (a slight modification of) Lemma \ref{lem:size_max_partial_sum} yields that
\begin{align}\label{ineq:localize_estimator_1_0}
\abs{\bar{\xi}|_{[x_0-r_n\bm{1}_{[\kappa_\ast:d]},x_0+h_2^\ast r_n]}\big}&\leq \sup_{h_2\geq 0} \abs{\bar{\xi}|_{[x_0-r_n\bm{1}_{[\kappa_\ast:d]},x_0+h_2 r_n]}\big} = \mathcal{O}_{\mathbf{P}}(\omega_n).
\end{align}
Combined with (\ref{ineq:localize_estimator_1}) using $h_1=(1/2)\bm{1}_{[\kappa_\ast:d]}$, this shows that on the event $\cup_{1\leq \ell\leq \kappa_\ast-1} \{(h_2^\ast)_\ell\geq 1\}$, 
\begin{align}\label{ineq:localize_estimator_2}
\hat{f}_n(x_0)-f_0(x_0)\gtrsim \omega_n\cdot n^\delta(1-\mathfrak{o}_{\mathbf{P}}(1)),
\end{align}
which can only occur with arbitrarily small probability according to Proposition \ref{prop:rate_estimator}. This means 
\begin{align}\label{ineq:localize_estimator_3}
\lim_{n \to \infty} \Prob\big(\cup_{1\leq \ell\leq \kappa_\ast-1}\{(h_2^\ast)_\ell \geq 1\}\big)=0.
\end{align}
Hence we may take $(h_2^\ast)_\ell = 0$ for $1\leq \ell\leq \kappa_\ast-1$ from now on. We remind again the reader that this claim is for fixed lattice design only.

\noindent\textbf{(Step 1b: fixed and random designs).}  Next, consider the event $\{\max_{\kappa_\ast \leq k\leq d} (h_2^\ast)_k>c\}$. For $c> \max_{s+1\leq k\leq d} (1-x_0)_k$, we only need to consider the event $\{\max_{\kappa_\ast \leq k\leq s} (h_2^\ast)_k>c\}$. Again using Lemma \ref{lem:taylor_expansion}, for $c,n$ large enough, and with probability at least $1-\mathcal{O}(n^{-2})$ in the random design case,
\begin{align*}
\bar{f_0}|_{[x_0-r_n\bm{1}_{[\kappa_\ast:d]},x_0+\bar{h}_2^\ast r_n]}-f_0(x_0)\gtrsim \tilde{\omega}_n \gtrsim \omega_n\cdot c.
\end{align*}
For the noise term, (\ref{ineq:localize_estimator_1_0}) holds in both fixed lattice and random design cases. Hence on the intersection of the event $\{\max_{\kappa_\ast \leq k\leq d} (h_2^\ast)_k>c\}$ and an event with probability tending to $1$,
\begin{align*}
\hat{f}_n(x_0)-f_0(x_0)\geq \omega_n(c-\mathcal{O}_{\mathbf{P}}(1))
\end{align*}
holds in both fixed lattice and random design cases. However, in view of Proposition \ref{prop:rate_estimator}, this occurs with arbitrarily small probability for large values of $c>0, n \in \N$. So the event $\{\max_{\kappa_\ast \leq k\leq d} (h_2^\ast)_k>c\}$ must occur with arbitrarily small probability for $c, n$ large enough. This proves (\ref{ineq:localize_estimator_0}).

\noindent \textbf{(Step 2: the small deviation problem).} In this step we handle the small deviation problem. $r_n$ is now defined as in (\ref{def:r_n}). Fix $\epsilon>0$. By Step 1, we may choose $c>0, n\in \N$ large enough such that the event $\Omega_{\epsilon,c}^{(0)}\equiv \{h_2^\ast \in H_c\}$ holds with probability at least $1-\epsilon$. Let $a,b,\gamma_\ast >0$ with $a>1, 0<b<\gamma_\ast<b+(a-1)$ be constants to be determined later on, and let $\mathcal{H}_{a,b,\gamma_\ast}(c)\equiv \{(h_1,h_2) \in \R_{\geq 0}^d\times \R_{\geq 0}^d: 0\leq (h_1)_k\leq c^a\bm{1}_{\kappa_\ast \leq k\leq s}+(x_0)_k \bm{1}_{s+1\leq k\leq d}, 0\leq (h_1)_d \leq c^{-b}, 0\leq (h_2)_k \leq c\bm{1}_{\kappa_\ast \leq k\leq s}+(1-x_0)_k \bm{1}_{s+1\leq k\leq d},0\leq (h_2)_d\leq c^{-\gamma_\ast}\}$ be defined as in Lemma \ref{lem:size_difference_gaussian_small_dev}. Consider the event $\Omega^{(1)}_c\equiv \{ (h_2^\ast)_d<c^{-\gamma_\ast}\}$. For simplicity of notation, we consider $s<d$; the case $s=d$ follows similarly with slightly different estimates due to Lemma \ref{lem:size_difference_gaussian_small_dev}. Let $Z_{ni}$ be defined by
\begin{align*}
Z_{ni}(h_1,h_2)\equiv \omega_n  \xi_{i}\bm{1}_{X_i \in [x_0-h_1 r_n, x_0+h_2 r_n]}.
\end{align*}
It is verified in the proof of Lemma \ref{lem:weak_conv_gaussian_proc} ahead that for any finite $\tau>0$,
\begin{align*}
\G_n(\cdot,\cdot)\equiv \sum_{i=1}^n Z_{ni}(\cdot,\cdot) \rightsquigarrow \sigma \cdot \G(\cdot,\cdot)\quad \textrm{in } \ell^\infty([0,\tau\bm{1}],[0,\tau\bm{1}]).
\end{align*}
Hence by Lemma \ref{lem:size_difference_gaussian_small_dev}, as long as $c>0, n\in \N$ are large enough, there exists a constant $C_1=C_1(d,\sigma,a)$ such that the  event 
\begin{align*}
\Omega^{(2)}_\epsilon\equiv \bigg\{\sup_{ (h_1,h_2) \in \mathcal{H}_{a,b,\gamma_\ast}(c)} \abs{\G_n(h_1,h_2)-\G_n(h_1,h_2\bm{1}_{[s+1:d-1]} )}\leq (C_1/\epsilon) \sqrt{c^{as_\ast-\gamma_\ast} \log c}\bigg\}
\end{align*} 
holds with probability at least $1-\epsilon$. On the other hand, by Lemma \ref{lem:small_deviation_min_max}, for $u\equiv \sqrt{c^{as_\ast-b}/(x_0)_d}\cdot \rho_\epsilon$ where $\rho_\epsilon$ is taken from Lemma \ref{lem:small_deviation_min_max}, if $a>1$ and $c>1$, it holds for $n$ large enough that
\begin{align*}
&\Prob\bigg(\min\nolimits_{ \substack{ 0\leq (h_2)_k \leq c\bm{1}_{\kappa_\ast \leq k\leq d}\\ 0\leq (h_2)_d\leq c^{-\gamma_\ast} }} \max\nolimits_{\substack{0\leq (h_1)_k\leq c^a\bm{1}_{\kappa_\ast \leq k\leq d} \\ 0\leq (h_1)_d \leq c^{-b}} } \G_n(h_1,h_2\bm{1}_{[s+1:d-1]})\leq u\bigg)\\
& \lesssim \Prob\bigg(\min\nolimits_{ \substack{ 0\leq (h_2)_k \leq c\bm{1}_{s+1 \leq k\leq d}\\ (h_2)_d=0 }} \max\nolimits_{\substack{0\leq (h_1)_k\leq c^a\bm{1}_{\kappa_\ast \leq k\leq d} \\ 0\leq (h_1)_d \leq c^{-b}} } \G(h_1,h_2)\leq u\bigg)\\
&\leq \Prob\bigg(\min\nolimits_{ \substack{ 0\leq (h_2)_k \leq \bm{1}_{s+1 \leq k\leq d}\\ (h_2)_d=0 }} \max\nolimits_{ \substack{(h_1)_k \leq \bm{1}_{\kappa_\ast \leq k\leq d}\\  (h_1)_d\leq (x_0)_d}} \sqrt{c^{as_\ast-b}/(x_0)_d}\cdot \G(h_1,h_2)\leq u\bigg)\leq \epsilon.
\end{align*}
Hence there exists some constant $C_2=C_2(x_0,\rho_\epsilon)>0$ such that the event 
\begin{align}
\Omega_\epsilon^{(3)}&\equiv \{\textrm{for any } 0\leq (h_2)_k\leq c\bm{1}_{\kappa_\ast \leq k\leq d}, 0\leq (h_2)_d\leq c^{-\gamma_\ast}, \\
&\qquad \exists h_1\textrm{ with } 0\leq (h_1)_k \leq c^a\bm{1}_{\kappa_\ast \leq k\leq d}, 0\leq (h_1)_d\leq c^{-b} \nonumber\\
&\qquad \textrm{ such that } \G_n(h_1,h_2\bm{1}_{[s+1:d-1]})\geq C_2\cdot \sqrt{c^{as_\ast-b}} \}\nonumber
\end{align}
holds with probability at least $1-\epsilon$ for $n$ large enough.

\noindent \textbf{(Step 2a: fixed design, noise).} In the fixed lattice design case, on the event $\Omega_{\epsilon,c}^{(0)}\cap \Omega_c^{(1)}\cap \Omega_\epsilon^{(2)}\cap \Omega_\epsilon^{(3)}$, we have for $c>0, n \in\N$ large enough,
\begin{align}\label{ineq:localize_estimator_4}
&\omega_n^{-1} \max_{ \substack{0\leq (h_1)_k\leq c^a\bm{1}_{\kappa_\ast \leq k\leq d}\\ 0\leq (h_1)_d\leq c^{-b}} } \bar{\xi}|_{[x_0-h_{1} r_n, x_0+h_{2}^\ast r_n ]} \\
& = \max_{ \substack{0\leq (h_1)_k\leq c^a\bm{1}_{\kappa_\ast \leq k\leq d}\\ 0\leq (h_1)_d\leq c^{-b}} }  \frac{  \sum_{i=1}^n Z_{ni}(h_1,h_2^\ast) }{\omega_n^2 \prod_{k=\kappa_\ast}^d  \big( \floor{(h_1r_n)_k n^{\beta_k}}+\floor{(h_2^\ast r_n)_k n^{\beta_k}}+1 \big)}\nonumber\\
&\geq \max_{ \substack{0\leq (h_1)_k\leq c^a\bm{1}_{\kappa_\ast \leq k\leq d}\\ 0\leq (h_1)_d\leq c^{-b}} } \frac{\G_n(h_1,h_2^\ast\bm{1}_{[s+1:d-1]})-(C_1/\epsilon)\sqrt{c^{as_\ast-\gamma_\ast}\log c}}{\omega_n^2 \prod_{k=\kappa_\ast}^d  \big( \floor{(h_1r_n)_k n^{\beta_k}}+\floor{(h_2^\ast r_n)_k n^{\beta_k}}+1 \big)}\nonumber\\
&\geq \frac{C_2\sqrt{c^{as_\ast-b}}- (C_1/\epsilon) \sqrt{c^{as_\ast-\gamma_\ast} \log c}}{(c^a+c)^{s_\ast}  (c^{-b}+c^{-\gamma_\ast})(1+\mathfrak{o}(1))} \geq C_3 \cdot c^{(b-as_\ast)/2}.\nonumber
\end{align}
\noindent \textbf{(Step 2b: random design, noise).}
In the random design case, let $h_1(c),h_2(c)$ be such that $(h_1(c))_k = c^a \bm{1}_{\kappa_\ast \leq k \leq s} + (x_0)_k \bm{1}_{s+1\leq k\leq d-1} + c^{-b} \bm{1}_{k=d}$, and $(h_2(c))_k = c \bm{1}_{\kappa_\ast\leq k \leq s} + (1-x_0)_k \bm{1}_{s+1\leq k\leq d-1} + c^{-\gamma_\ast} \bm{1}_{k=d}$. Using Bernstein's inequality, 
\begin{align*}
\Prob\big(\bigabs{ \big(\Prob_n-P\big) \bm{1}_{X \in [x_0-h_1(c) r_n,x_0+h_2(c) r_n]} }\geq \sigma_c^2\big)\leq Ce^{-C^{-1}  n\sigma_c^2},
\end{align*}
where $\sigma_c^2 \equiv \mathrm{Var}(\bm{1}_{X \in [x_0-h_1(c) r_n,x_0+h_2(c) r_n]})\approx c^{as_\ast-b} \prod_{k=\kappa_\ast}^d (r_n)_k $ for $c, n$ large. Hence the event 
\begin{align}
\Omega_{c}^{(4)}&\equiv \{\Prob_n \bm{1}_{X \in [x_0-h_1(c) r_n,x_0+h_2(c) r_n]} \\
&\qquad\qquad\leq  P \bm{1}_{X \in [x_0-h_1(c) r_n,x_0+h_2(c) r_n]} + \sigma_c^2 \}\nonumber
\end{align} 
occurs with probability at least $1- C e^{-C^{-1}  n\sigma_c^2}$. So, in the random design setting, on the event $\Omega_{\epsilon,c}^{(0)}\cap \Omega_c^{(1)}\cap \Omega_\epsilon^{(2)}\cap \Omega_\epsilon^{(3)} \cap \Omega_c^{(4)}$, it holds that
\begin{align}\label{ineq:localize_estimator_5}
&\omega_n^{-1} \max_{ \substack{0\leq (h_1)_k\leq c^a\bm{1}_{\kappa_\ast \leq k\leq d}\\ 0\leq (h_1)_d\leq c^{-b}} } \bar{\xi}|_{[x_0-h_{1} r_n, x_0+h_{2}^\ast r_n ]}\\
& = \max_{ \substack{0\leq (h_1)_k\leq c^a\bm{1}_{\kappa_\ast \leq k\leq d}\\ 0\leq (h_1)_d\leq c^{-b}} }  \frac{  \sum_{i=1}^n Z_{ni}(h_1,h_2^\ast) }{\omega_n^2 \big(1\vee n\Prob_n \bm{1}_{X \in [x_0-h_1 r_n, x_0+h_2^\ast r_n]}\big) }\nonumber\\
&\geq \frac{C_2\sqrt{c^{as_\ast-b}}- (C_1/\epsilon) \sqrt{c^{as_\ast-\gamma_\ast} \log c}}{(c^a+c)^{s_\ast}  (c^{-b}+c^{-\gamma_\ast})+ (1+\mathfrak{o}(1)) c^{as_\ast-b} } \geq C_3 \cdot c^{(b-as_\ast)/2}.\nonumber
\end{align}
\noindent \textbf{(Step 2c: fixed and random designs, bias).} On the other hand, in both fixed lattice and random design cases,
\begin{align}\label{ineq:localize_estimator_6}
&\min_{ \substack{0\leq (h_1)_k\leq c^a\bm{1}_{\kappa_\ast \leq k\leq d}\\ 0\leq (h_1)_d\leq c^{-b}} } \bar{f_0}|_{[x_0-h_{1} r_n, x_0+h_{2}^\ast r_n ]}-f_0(x_0) \\
&\geq f_0(x_0-c^a \bm{1}_{[\kappa_\ast:d]} r_n)-f_0(x_0) \geq - C_4\cdot c^{a\max_{\kappa_\ast \leq k\leq s} \alpha_k}\cdot \omega_n.\nonumber
\end{align}
Combining the estimates (\ref{ineq:localize_estimator_4}), (\ref{ineq:localize_estimator_5}) and (\ref{ineq:localize_estimator_6}), we see that for fixed $\epsilon>0$, if $c>0, n\in \N$ are chosen large enough, on the intersection of the event $\{(h_2^\ast)_d<c^{-\gamma_\ast}\}$ and an event with probability at least $1-4\epsilon$, it holds that
\begin{align*}
\hat{f}_n(x_0)-f_0(x_0)&\geq \max_{ \substack{0\leq (h_1)_k\leq c^a\bm{1}_{\kappa_\ast \leq k\leq d}\\ 0\leq (h_1)_d\leq c^{-b}} } \bar{\xi}|_{[x_0-h_{1} r_n, x_0+h_{2}^\ast r_n ]}\\
&\qquad\qquad+\min_{ \substack{0\leq (h_1)_k\leq c^a\bm{1}_{\kappa_\ast \leq k\leq d}\\ 0\leq (h_1)_d\leq c^{-b}} } \bar{f_0}|_{[x_0-h_{1} r_n, x_0+h_{2}^\ast r_n ]}-f_0(x_0)\\
&\geq  \omega_n c^{a\max_{\kappa_\ast \leq k\leq s} \alpha_k} \big(C_3\cdot c^{(b-as_\ast)/2-a\max_{\kappa_\ast \leq k\leq s} \alpha_k}-C_4 \big).
\end{align*}
We choose $a\geq3, b \geq 2(1+a\max_{\kappa_\ast \leq k\leq s} \alpha_k)+as_\ast$ and $\gamma_\ast = b+1$, so that the above display can only occur with arbitrarily small probability by Proposition \ref{prop:rate_estimator} for large $c>0, n \in \N$. Hence the event $\{(h_2^\ast)_d<c^{-\gamma_\ast}\}$, and thereby $\{\min_{\kappa_\ast \leq k\leq d} (h_2^\ast)_k<c^{-\gamma_\ast}\}$, must occur with arbitrarily small probability for large $c>0$. The small deviation for $h_1^\ast$ can be handled similarly so we omit the details. 
\end{proof}

\subsection{Compact convergence}

In this subsection, we establish the weak convergence of the localized process $W_{n,c}$ to the localized limit $W_c$.

\begin{proposition}\label{prop:compact_converge}
	Let $1\leq \kappa_\ast<s+1$. Assume the same conditions as in Theorem \ref{thm:limit_distribution_pointwise}. For any $c>1$, 
	\begin{align*}
	&W_{n,c}\equiv \omega_n^{-1} \big(\hat{f}_{n,c}(x_0)-f_0(x_0)\big)\\
	&\qquad\rightsquigarrow \max_{c^{-\gamma_\ast}\bm{1}\leq h_1 \leq c\bm{1}}\min_{c^{-\gamma_\ast}\bm{1}\leq h_2\leq c\bm{1}} \bigg[\frac{\sigma \cdot \G(h_1,h_2)}{\prod_{k=\kappa_\ast}^d \big((h_1)_k+(h_2)_k\big)}\\
	&\qquad\qquad+\sum_{ \substack{\bm{j} \in J_\ast,\\ j_k =0,1\leq k\leq \kappa_\ast-1}} \frac{ \partial^{\bm{j}} f_0(x_0)}{(\bm{j}+\bm{1})!} \prod_{k=\kappa_\ast}^s \frac{(h_2)_k^{j_k+1}-(-h_1)_k^{j_k+1}}{(h_2)_k+(h_1)_k}\bigg]\equiv W_c.
	\end{align*}
	Here $\sigma$ and $\G$ are specified in Theorem \ref{thm:limit_distribution_pointwise}.
\end{proposition}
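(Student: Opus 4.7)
My plan is to write
\begin{align*}
\mathbb{U}_n(h_1,h_2) &\equiv \omega_n^{-1}\big[\bar{Y}|_{[x_0-h_1 r_n,\, x_0+h_2 r_n]}-f_0(x_0)\big] \\
&= \omega_n^{-1}\bar{\xi}|_{[x_0-h_1 r_n,\, x_0+h_2 r_n]}+\omega_n^{-1}\big[\bar{f_0}|_{[x_0-h_1 r_n,\, x_0+h_2 r_n]}-f_0(x_0)\big]
\end{align*}
on the compact parameter set $\mathcal{K}_c\equiv [c^{-\gamma_\ast}\bm{1},c\bm{1}]\times[c^{-\gamma_\ast}\bm{1},c\bm{1}]$, so that $W_{n,c}=\max_{h_1}\min_{h_2}\mathbb{U}_n(h_1,h_2)$. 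The first step is to show that $\mathbb{U}_n\rightsquigarrow\mathbb{U}$ in $\ell^\infty(\mathcal{K}_c)$, where $\mathbb{U}$ is the integrand in the definition of $W_c$. The conclusion then follows by the continuous mapping theorem, because the functional $\phi\mapsto\max_{h_1}\min_{h_2}\phi(h_1,h_2)$ is $1$-Lipschitz with respect to the supremum norm on any compact rectangle.

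For the bias, I would apply Assumption~\ref{assump:smoothness} and Lemma~\ref{lem:mixed_derivative_vanish} to expand $f_0$ uniformly on $[x_0-c r_n,x_0+c r_n]$: only multi-indices $\bm{j}\in J_\ast$ can contribute, and the remainder is $o(\omega_n)$ uniformly in $(h_1,h_2)\in\mathcal{K}_c$. Averaging $(x-x_0)^{\bm{j}}$ over the rectangle then yields (up to a discretization/continuity remainder that vanishes on $\mathcal{K}_c$, since $\pi$ is continuous at $x_0$ in the random design case and the lattice is exact in the fixed-lattice case) a factor $\prod_{k=\kappa_\ast}^s\big((h_2)_k^{j_k+1}-(-h_1)_k^{j_k+1}\big)/\big((h_1)_k+(h_2)_k\big)$ times $\omega_n^{\sum_{k=\kappa_\ast}^s j_k/\alpha_k}=\omega_n$. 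The terms with $j_k\neq 0$ for some $k<\kappa_\ast$ are wiped out on division by $\omega_n$: the uniqueness of $\kappa_\ast$ under the ordering (\ref{def:ordering}) together with Proposition~\ref{prop:comparision_local_rates} implies $(r_n)_k^{j_k}=o(1)$ along those coordinates in the fixed-lattice case, while in the random design case $\kappa_\ast=1$ and no such terms arise. This delivers the deterministic limit matching $\bar{f_0}(h_1,h_2;x_0)$.

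For the noise, finite-dimensional convergence follows from a triangular-array Lindeberg CLT applied to the partial sums of the i.i.d.~errors $\xi_i$ over rectangles, together with the scaling identity $\omega_n^{2+\sum_{k=\kappa_\ast}^s \alpha_k^{-1}}n_\ast=1$. The resulting covariance reduces to $\sigma^2\prod_{k=\kappa_\ast}^s\big((h_1)_k\wedge(h_1')_k+(h_2)_k\wedge(h_2')_k\big)\cdot\mathcal{I}^{[s+1:d]}_\pi(h_1\wedge h_1',h_2\wedge h_2')$ divided by the product of the two volume factors, which is exactly the covariance of $\sigma\G/[\text{volume}]$. For asymptotic equicontinuity, I would invoke the partial-sum uniform CLT machinery of \cite{alexander1987central} (or Theorem~2.11.9 in \cite{van1996weak}): the rectangle class $\{\bm{1}_{[x_0-h_1 r_n,\, x_0+h_2 r_n]}:(h_1,h_2)\in\mathcal{K}_c\}$ is VC and has uniformly bounded envelope, and on $\mathcal{K}_c$ the volume of the rectangle is bounded below by $c^{-2d\gamma_\ast}\prod_{k=\kappa_\ast}^d(r_n)_k$, so the intrinsic $L_2$-pseudo-metric is dominated by a constant multiple of the Euclidean metric on $\mathcal{K}_c$ after normalization. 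The entropy integral is therefore finite and equicontinuity follows in both designs.

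The main technical obstacle is the asymptotic equicontinuity step, particularly the need to accommodate both the fixed-lattice and random designs with a single argument. In the fixed-lattice case one has to control a non-random but discretized partial-sum process (so the "empirical measure" degenerates and variance bounds must be handled by direct counting), whereas in the random design case one must verify the Lindeberg-type moment conditions for a genuine triangular-array empirical process. In both settings the key lower bound on rectangle volumes—which forces the intrinsic metric to be bounded above by the Euclidean metric on $\mathcal{K}_c$—hinges on the restriction $h_i\geq c^{-\gamma_\ast}\bm{1}$, and this is exactly why localizing to the compact set $\mathcal{K}_c$ (as done in Proposition~\ref{prop:localize_estimator_range}) is essential before the continuous mapping theorem can be applied.
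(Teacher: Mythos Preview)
Your proposal is correct and follows essentially the same route as the paper: decompose $\mathbb{U}_n$ into a noise part and a bias part, handle the bias via the Taylor expansion in Assumption~\ref{assump:smoothness} (the paper packages this as Lemma~\ref{lem:taylor_expansion}), handle the noise via the partial-sum CLT machinery of \cite{alexander1987central}/\cite[Theorem~2.11.9]{van1996weak} (the paper isolates this as Lemma~\ref{lem:weak_conv_gaussian_proc}), and finish with the continuous mapping theorem for the $1$-Lipschitz functional $\phi\mapsto\max_{h_1}\min_{h_2}\phi$. The only cosmetic difference is that the paper first proves weak convergence of the \emph{un-normalized} partial-sum process $\G_n\rightsquigarrow\sigma\G$ in $\ell^\infty$ and then divides by the (deterministically convergent) count, whereas you work with the normalized $\mathbb{U}_n$ directly; both are fine on $\mathcal{K}_c$ because the rectangle volume is bounded below there. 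One small imprecision: for $k<\kappa_\ast$ you do not need the ``$o(1)$'' argument you sketch, since by definition $(r_n)_k=0$ for those coordinates, so the rectangle is a single lattice point in those directions and terms with $j_k>0$ vanish identically.
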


We need the following functional central limit theorem.

\begin{lemma}\label{lem:weak_conv_gaussian_proc}
For any $h_1,h_2>0$, let
\begin{align*}
\G_n(h_1,h_2)\equiv \omega_n \sum_{i: x_0-h_1 r_n\leq X_i\leq x_0+h_2 r_n} \xi_i.
\end{align*}
Then for any $c>1$, $\G_n \rightsquigarrow \sigma \cdot  \G$ in $\ell^\infty\big([c^{-\gamma_\ast}\bm{1},c\bm{1}]\times [c^{-\gamma_\ast}\bm{1},c\bm{1}]\big)$.
\end{lemma}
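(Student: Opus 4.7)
My plan is to establish the weak convergence in $\ell^\infty$ via the classical two-step program: first verify convergence of finite-dimensional marginals of $\G_n$ to those of $\sigma\G$, and then verify asymptotic equicontinuity of $\{\G_n\}$ with respect to a totally bounded pseudo-metric on the compact parameter set $[c^{-\gamma_\ast}\bm{1},c\bm{1}]\times[c^{-\gamma_\ast}\bm{1},c\bm{1}]$. Together these yield $\G_n\rightsquigarrow \sigma\G$ in $\ell^\infty$ on that set by the standard portmanteau characterization of weak convergence in the space of bounded functions.

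For finite-dimensional convergence, the Cram\'er--Wold device reduces the problem to a single Lindeberg--Feller CLT applied to an array of independent mean-zero summands. The covariance $\mathrm{Cov}(\G_n(h_1,h_2),\G_n(h_1',h_2'))$ equals $\omega_n^2\sigma^2 N_n$, where $N_n$ is the number of design points (or, in the random case, $n$ times the probability) lying in the intersection rectangle $[x_0-(h_1\wedge h_1')r_n,x_0+(h_2\wedge h_2')r_n]$. In the fixed lattice design one has $N_n\approx \prod_{k=\kappa_\ast}^d((h_1\wedge h_1')_k+(h_2\wedge h_2')_k)(r_n)_k n_k$, and the scaling identity $\omega_n^2\prod_{k=\kappa_\ast}^d (r_n)_k n_k = 1$ from Proposition~\ref{prop:comparision_local_rates} reduces the covariance to the target $\sigma^2\prod_{k=\kappa_\ast}^d((h_1\wedge h_1')_k+(h_2\wedge h_2')_k)$. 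In the random design, continuity of $\pi$ at the relevant face lets one factor $N_n/n$ into a product over $\kappa_\ast\leq k\leq s$ times $\mathcal{I}_\pi^{[s+1:d]}(h_1\wedge h_1',h_2\wedge h_2')(1+\mathfrak{o}(1))$, again yielding the prescribed limit. Lindeberg's condition holds trivially, as each summand is bounded in modulus by $\omega_n|\xi_i|$ with $\omega_n\to 0$ and $\E\xi_i^2<\infty$.

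For asymptotic equicontinuity, observe that $\G_n(h_1,h_2)=\omega_n\sum_{i=1}^n\xi_i\bm{1}_{X_i\in R_n(h_1,h_2)}$ is a weighted partial-sum process indexed by rectangles in $[0,1]^d$, which form a VC class of polynomial complexity with uniformly bounded envelope. The intrinsic variance semi-metric $\rho_n^2((h_1,h_2),(h_1',h_2'))\equiv \mathrm{Var}(\G_n(h_1,h_2)-\G_n(h_1',h_2'))$ is controlled by $\omega_n^2$ times the mass of the symmetric difference $R_n(h_1,h_2)\triangle R_n(h_1',h_2')$, which after the rescaling used in the covariance computation is comparable to the $\ell^1$ distance between the parameter pairs. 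Hence the parameter set is totally bounded under $\rho_n$, and $\rho_n$ converges uniformly to the intrinsic pseudo-metric of $\sigma\G$. A standard chaining argument combined with uniform central limit theorems for partial-sum processes over VC classes, in the spirit of \cite{alexander1987central} or Section~2.11 of \cite{van1996weak}, then yields $\sup_{\rho_n<\delta}|\G_n(h_1,h_2)-\G_n(h_1',h_2')|\to 0$ in probability as $\delta\to 0$, which is the required asymptotic equicontinuity.

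The main obstacle is the heterogeneous nature of the rescaling vector $r_n$: coordinates $k<\kappa_\ast$ have $(r_n)_k=0$ so the rectangles collapse onto a codimension-$(\kappa_\ast-1)$ slice through $x_0$; coordinates $\kappa_\ast\leq k\leq s$ have genuinely shrinking windows of order $\omega_n^{1/\alpha_k}$; and coordinates $k>s$ have $(r_n)_k\equiv 1$, no shrinkage at all. Making the empirical-process machinery work uniformly across these three regimes---especially ensuring that $\rho_n$ has a genuine limit on the full compact index set, and that in the random design the unshrunk dimensions $k>s$ produce the continuous density factor $\mathcal{I}_\pi^{[s+1:d]}$ through the continuity of $\pi$ rather than introducing degenerate or exploding terms---is where the bulk of the technical bookkeeping sits, but all of it is routine once the VC structure of rectangles and the carefully chosen localization exponent $\gamma_\ast$ are in place.
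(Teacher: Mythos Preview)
Your proposal is correct and follows essentially the same approach as the paper: verify finite-dimensional convergence by direct covariance computation plus Lindeberg, then establish asymptotic equicontinuity via the partial-sum process machinery of \cite{alexander1987central} and Section~2.11 of \cite{van1996weak}. The only cosmetic difference is that the paper applies Theorem~2.11.9 of \cite{van1996weak} by constructing explicit $\epsilon$-brackets (gridding $(h_1,h_2)$ and controlling $\sum_i \E\sup_{f,g}|Z_{ni}(f)-Z_{ni}(g)|^2$ directly), whereas you invoke the VC structure of rectangles and uniform entropy; both routes land in the same place and the paper in fact verifies exactly the three conditions you would need.
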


\begin{proof}
See Section \ref{section:proof_lemmas}.
\end{proof}

\begin{proof}[Proof of Proposition \ref{prop:compact_converge}]
	Note that in the fixed lattice design case,
	\begin{align*}
	W_{n,c}& =  \max_{c^{-\gamma_\ast}\bm{1}\leq h_1 \leq c\bm{1}}\min_{c^{-\gamma_\ast}\bm{1}\leq h_2\leq c\bm{1}}  \frac{\omega_n^{-1} }{\prod_{k=\kappa_\ast}^d \big( \floor{ (h_1r_n)_k n^{\beta_k}}+\floor{ (h_2r_n)_k n^{\beta_k}}+1\big) }\\
	&\qquad\bigg[\sum_{i: x_0-h_1 r_n\leq X_i\leq x_0+h_2 r_n} \xi_i+  \sum_{i: x_0-h_1 r_n\leq X_i\leq x_0+h_2 r_n}\big(f_0(X_i)-f_0(x_0)\big) \bigg]\\
	& = \max_{c^{-\gamma_\ast}\bm{1}\leq h_1 \leq c\bm{1}}\min_{c^{-\gamma_\ast}\bm{1}\leq h_2\leq c\bm{1}} \bigg[\frac{\G_n(h_1,h_2)}{\prod_{k=\kappa_\ast}^d \big((h_1)_k+(h_2)_k\big)}\cdot (1+\mathfrak{o}(1))\\
	&\qquad\qquad+\sum_{ \substack{\bm{j} \in J_\ast,\\ j_k =0,1\leq k\leq \kappa_\ast-1}} \frac{ \partial^{\bm{j}} f_0(x_0)}{(\bm{j}+\bm{1})!} \prod_{k=\kappa_\ast}^s \frac{(h_2)_k^{j_k+1}-(-h_1)_k^{j_k+1}}{(h_2)_k+(h_1)_k}\bigg]+\mathfrak{o}(1).
	\end{align*}
	Here the last equality follows from Lemma \ref{lem:taylor_expansion}: for $c^{-\gamma_\ast}\bm{1}\leq h_1,h_2\leq c\bm{1}$,
	\begin{align*}
	& \frac{\sum_{i: x_0-h_1 r_n\leq X_i\leq x_0+h_2 r_n}\big(f_0(X_i)-f_0(x_0)\big)}{\prod_{k=\kappa_\ast}^d \big( \floor{ (h_1r_n)_k n^{\beta_k}}+\floor{ (h_2r_n)_k n^{\beta_k}}+1\big) }\\
	& = \mathfrak{o}(\omega_n)+\omega_n \sum_{ \substack{\bm{j} \in J_\ast,\\ j_k =0,1\leq k\leq \kappa_\ast-1}} \frac{ \partial^{\bm{j}} f_0(x_0)}{(\bm{j}+\bm{1})!} \prod_{k=\kappa_\ast}^s \frac{(h_2)_k^{j_k+1}-(-h_1)_k^{j_k+1}}{(h_2)_k+(h_1)_k}.
	\end{align*}
	Since the map $\max_{c^{-\gamma_\ast}\bm{1}\leq h_1 \leq c\bm{1}}\min_{c^{-\gamma_\ast}\bm{1}\leq h_2\leq c\bm{1}} : \R^{ [c^{-\gamma_\ast}\bm{1},c\bm{1}]\times [c^{-\gamma_\ast}\bm{1},c\bm{1}]}\to \R$ is continuous with respect to $\pnorm{\cdot}{\infty}$ on $[c^{-\gamma_\ast}\bm{1},c\bm{1}]\times [c^{-\gamma_\ast}\bm{1},c\bm{1}]$, the claim of the proposition for the fixed lattice design case follows by Lemma \ref{lem:weak_conv_gaussian_proc} and the continuous mapping theorem.	The random design case follows from similar arguments by using Lemma \ref{lem:ratio_indicator}. 
\end{proof}

\subsection{Localizing the limit}

In this subsection, we establish that the limit $W$ can be localized through $W_c$ in the sense described in Section \ref{section:proof_outline_thm1}.

\begin{proposition}\label{prop:localize_limit_range}
Let $1\leq \kappa_\ast<s+1$. For $c>1$, let $W_c$ be as in Proposition \ref{prop:compact_converge}, and $W\equiv W_\infty$. Then $\lim_{c \to \infty}\Prob\big(W_c\neq W\big)=0$.
\end{proposition}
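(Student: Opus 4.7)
The plan is to execute the Gaussian-limit analogue of Proposition~\ref{prop:localize_estimator_range}. Let $(\tilde{h}_1,\tilde{h}_2)$ denote any sup-inf pair realising $W$; such a pair exists almost surely, since the bias polynomial $\bar{f_0}(\cdot,\cdot;x_0)$ eventually dominates the a.s.\ finite Gaussian fluctuations (cf.\ Lemma~\ref{lem:finite_expectation_W}) and the objective is upper semi-continuous. The event $\{W_c=W\}$ contains $\{(\tilde{h}_1,\tilde{h}_2)\in [c^{-\gamma_\ast}\bm{1},c\bm{1}]\times [c^{-\gamma_\ast}\bm{1},c\bm{1}]\}$, so it suffices to rule out, with probability tending to $1$ as $c\to\infty$, each of the four events $\{\max_k(\tilde{h}_i)_k>c\}$ and $\{\min_k(\tilde{h}_i)_k<c^{-\gamma_\ast}\}$ for $i=1,2$. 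Throughout, indices $k$ range over $\{\kappa_\ast,\ldots,d\}$; the four cases are symmetric up to interchanging $\tilde h_1\leftrightarrow \tilde h_2$ and inf$\leftrightarrow$sup, so I only treat $\tilde h_2$.

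For the large-deviation half, on $\{\max_k(\tilde{h}_2)_k>c\}$ I would evaluate the inner infimum at $\tilde h_2$ against the comparator $h_1=\bm{1}_{[\kappa_\ast:d]}$. By Lemma~\ref{lem:mixed_derivative_vanish}(1), $\partial_k^{\alpha_k}f_0(x_0)>0$, so the polynomial bias $\bar{f_0}(h_1,\tilde h_2;x_0)$ grows at least like $c^{\min_k \alpha_k}$ in the coordinate attaining the maximum, while the Gaussian quotient
\[
\sup_{h_1>0,\,h_2>0}\frac{\abs{\sigma\,\G(h_1,h_2)}}{\prod_{k=\kappa_\ast}^d((h_1)_k+(h_2)_k)}
\]
is a.s.\ finite (it has a sub-Gaussian tail, by a standard Brownian-sheet supremum bound combined with the self-similarity of $\G$ recalled in Remark~\ref{rmk:dependence_C_alpha}(3)). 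Hence $W\gtrsim c^{\min_k \alpha_k}-\mathcal{O}_{\mathbf{P}}(1)$ on this event, contradicting $W=\mathcal{O}_{\mathbf{P}}(1)$ once $c$ is large.

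The small-deviation half is the main obstacle and is a direct Gaussian transcription of Step~2 in the proof of Proposition~\ref{prop:localize_estimator_range}. I would focus on the event $\{(\tilde{h}_2)_d<c^{-\gamma_\ast}\}$, the argument for the remaining coordinates being analogous via the symmetry of $\G$. Split $\G(h_1,\tilde h_2)=\G(h_1,\tilde h_2\bm{1}_{[s+1:d-1]})+\big(\G(h_1,\tilde h_2)-\G(h_1,\tilde h_2\bm{1}_{[s+1:d-1]})\big)$; Lemma~\ref{lem:size_difference_gaussian_small_dev} controls the second, slab-oscillation, term by $(C/\epsilon)\sqrt{c^{as_\ast-\gamma_\ast}\log c}$, while Lemma~\ref{lem:small_deviation_min_max} supplies a lower fluctuation of order $\sqrt{c^{as_\ast-b}}$ for the first term, over the enlarged $h_1$-block $\{0\le (h_1)_k\le c^a,\,0\le (h_1)_d\le c^{-b}\}$. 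Combined with the polynomial lower bound $\bar{f_0}(h_1,\tilde h_2;x_0)\ge -C\,c^{a\max_k \alpha_k}$ from Lemma~\ref{lem:taylor_expansion}, the max-min formula yields
\[
W \;\gtrsim\; c^{a\max_k\alpha_k}\Big(C_3\,c^{(b-as_\ast)/2-a\max_k\alpha_k}-C_4\Big).
\]
The same choice $a\ge 3$, $b\ge 2(1+a\max_k \alpha_k)+as_\ast$, $\gamma_\ast=b+1$ as in Proposition~\ref{prop:localize_estimator_range} produces a contradiction with $W=\mathcal{O}_{\mathbf{P}}(1)$ for $c$ large, completing the proof. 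The self-similarity stated in Remark~\ref{rmk:dependence_C_alpha}(3) is what allows all stochastic bounds above to be stated as pure powers of $c$, and is the essential simplification over the discrete case that removed the need for a separate analysis of the `effective dimension' in the fixed lattice setting (Step 1a there).
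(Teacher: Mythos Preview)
Your overall approach mirrors the paper's proof: establish large and small deviation bounds for the limiting sup-inf pair $(\tilde h_1,\tilde h_2)$, using $W=\mathcal{O}_{\mathbf{P}}(1)$ from Lemma~\ref{lem:finite_expectation_W} as the anchor, and transcribe the small-deviation machinery (Lemmas~\ref{lem:size_difference_gaussian_small_dev} and~\ref{lem:small_deviation_min_max}) directly from Proposition~\ref{prop:localize_estimator_range}. The small-deviation half and the choice of $(a,b,\gamma_\ast)$ are exactly as in the paper.

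There is, however, one genuine error in your large-deviation argument. You assert that
\[
\sup_{h_1>0,\,h_2>0}\frac{\abs{\G(h_1,h_2)}}{\prod_{k=\kappa_\ast}^d((h_1)_k+(h_2)_k)}
\]
is a.s.\ finite. It is not: already in $d_\ast=1$ with $\G(h_1,h_2)=\mathbb{B}(h_2)-\mathbb{B}(-h_1)$, taking $h_1=0$ gives $\abs{\mathbb{B}(h_2)}/h_2$, which diverges as $h_2\searrow 0$ by the law of the iterated logarithm. Self-similarity cannot rescue this, since the blow-up occurs at small scales where the denominator shrinks faster than the numerator's standard deviation. What the paper actually uses (and what your own comparator choice $h_1=\bm{1}_{[\kappa_\ast:d]}$ already sets up) is the \emph{one-sided} supremum
\[
\sup_{h_2>0}\frac{\abs{\G(\bm{1},h_2)}}{\prod_{k=\kappa_\ast}^d(1+(h_2)_k)},
\]
which \emph{is} a.s.\ finite with a sub-Gaussian tail; this is precisely Lemma~\ref{lem:brownian_motion_uniform}. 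Once you restrict to the fixed comparator, your argument goes through. A secondary point: your claim that the bias grows like $c^{\min_k\alpha_k}$ glosses over possible negative mixed-derivative contributions from $\bm{j}\in J_\ast$ with $\pnorm{\bm{j}}{0}>1$; the paper handles these via the inequality in Lemma~\ref{lem:taylor_expansion}, which you should also invoke here rather than only citing Lemma~\ref{lem:mixed_derivative_vanish}(1).
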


To prove Proposition \ref{prop:localize_limit_range}, we need the following.

\begin{lemma}\label{lem:finite_expectation_W}
	Let $W$ be defined as in Proposition \ref{prop:localize_limit_range}, then $\pnorm{W}{\psi_2}<\infty$. Here $\pnorm{\cdot}{\psi_2}$ is the sub-Gaussian Orcliz norm (definition see e.g. \cite{van1996weak}).
\end{lemma}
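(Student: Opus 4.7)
The plan is to establish the sub-Gaussian tail bound $\Prob(|W|\geq t)\leq C\exp(-ct^2)$, which is equivalent to $\pnorm{W}{\psi_2}<\infty$. The key observation is that the sup-inf structure of $W$ reduces the problem, via plugging in a fixed admissible value of one variable, to controlling the supremum of a centered Gaussian process plus a \emph{uniformly bounded} deterministic drift.

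For the upper tail, fix any admissible $h_2^{(0)}$ with $(h_2^{(0)})_k=1$ for $\kappa_\ast\leq k\leq s$ and $(h_2^{(0)})_k=(1-x_0)_k/2$ for $s+1\leq k\leq d$. Taking $h_2=h_2^{(0)}$ in the infimum defining $W$ gives the pointwise bound $W\leq \sup_{h_1}[Y(h_1)+D(h_1)]$, where $Y(h_1)\equiv \sigma\G(h_1,h_2^{(0)})/V(h_1,h_2^{(0)})$, $V(h_1,h_2)\equiv\prod_{k=\kappa_\ast}^s((h_1)_k+(h_2)_k)\cdot \mathcal{I}_\pi^{[s+1:d]}(h_1,h_2)$, and $D(h_1)\equiv\bar{f_0}(h_1,h_2^{(0)};x_0)$. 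Observe that $\bar{f_0}(h_1,h_2;x_0)$ equals the average of the local Taylor polynomial $P(u)\equiv\sum_{\bm{j}\in J_\ast}\partial^{\bm{j}}f_0(x_0)u^{\bm{j}}/\bm{j}!$ (with $j_k=0$ for $k<\kappa_\ast$, indexing the effective coordinates $\kappa_\ast\leq k\leq s$) over the rectangle $[-h_1,h_2]$. Since coordinate-wise monotonicity of $f_0$ forces $\partial_k P\geq 0$ for $\kappa_\ast\leq k\leq s$ (obtained by passing to the limit in $\omega_n^{-1}(r_n)_k\partial_kf_0(x_0+ur_n)\geq 0$ and using continuity of $\partial_k P$), the elementary bound $P(u)\leq P(h_2^{(0)})$ for $u\leq h_2^{(0)}$ yields $D(h_1)\leq P(h_2^{(0)})=:C_+$ uniformly in $h_1$.

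It then suffices to bound $\sup_{h_1}Y(h_1)$. Since $h_2^{(0)}$ is bounded away from zero, $V(h_1,h_2^{(0)})\geq V_{\min}>0$ over the admissible domain, so $\sigma_\ast^2\equiv \sup_{h_1}\mathrm{Var}(Y(h_1))=\sigma^2/V_{\min}<\infty$. Moreover $\mathrm{Var}(Y(h_1))\to 0$ as $(h_1)_k\to\infty$ for any $\kappa_\ast\leq k\leq s$. A dyadic peeling/chaining argument, breaking $h_1$ into shells $\{\max_{\kappa_\ast\leq k\leq s}(h_1)_k\asymp 2^j\}$ and combining polynomial metric-entropy bounds on each shell with shell-wise variance decay, yields $\E\sup_{h_1}Y(h_1)<\infty$. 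Applying the Borell-Sudakov-Tsirelson concentration inequality then gives $\Prob(\sup Y\geq \E\sup Y+s)\leq \exp(-s^2/(2\sigma_\ast^2))$, which combined with $D\leq C_+$ yields the sub-Gaussian upper tail for $W$. The lower tail is symmetric: fix $h_1^{(0)}$ admissible, use $W\geq \inf_{h_2}[Y'(h_2)+D(h_1^{(0)},h_2)]$ together with the dual drift lower bound $D(h_1^{(0)},h_2)\geq P(-h_1^{(0)})=:-C_-$, and repeat the argument.

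The main obstacle will be the chaining step showing $\E\sup_{h_1}Y<\infty$ over the non-compact domain of $h_1$. Although variance decay at infinity ensures the supremum is effectively achieved on a bounded core, careful bookkeeping of the intrinsic metric of $Y$, governed by the Brownian-sheet-type covariance of $\G$ (cf.\ Remark \ref{rmk:dependence_C_alpha}(1)) normalized by $1/V$, is required to control the metric entropy on each dyadic shell and sum the shell-wise contributions into a single finite constant. The self-similarity of $\G$ (Remark \ref{rmk:dependence_C_alpha}(3)) should allow one to transfer entropy estimates between shells and reduce the argument to a compact problem at appropriate scale.
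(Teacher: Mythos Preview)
Your proposal is correct and follows essentially the same route as the paper. The paper fixes $h_2=\bm{1}$, bounds $W\leq \sup_{h_1}\mathbb{U}(h_1,\bm{1})\leq \sup_{h_1}|\G(h_1,\bm{1})|/\prod_{k=\kappa_\ast}^d((h_1)_k+1)+\mathcal{O}(1)$, and then invokes a separately stated lemma (Lemma~\ref{lem:brownian_motion_uniform}) that establishes the sub-Gaussian tail for the normalized supremum directly via dyadic peeling, Dudley's entropy integral on each shell, and shell-by-shell Gaussian concentration; the lower tail is handled symmetrically.

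The only packaging difference is that you propose to first show $\E\sup_{h_1}Y(h_1)<\infty$ by the same shell decomposition and then apply Borell--TIS once globally, whereas the paper applies concentration within each shell and sums the resulting tail bounds. Both versions work; yours is arguably a bit cleaner conceptually. Your monotonicity argument for the drift $P$ (hence $D(h_1)\leq P(h_2^{(0)})$) is also correct---the paper establishes the analogous fact for the auxiliary polynomial in the proof of Lemma~\ref{lem:mixed_derivative_vanish}(2)---though the paper simply records the drift bound as $\mathcal{O}(1)$ without spelling this out.
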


\begin{lemma}\label{lem:brownian_motion_uniform}
	Let $\G$ be defined as in Theorem \ref{thm:limit_distribution_pointwise}. Then for any $u\geq 1$,
	\begin{align*}
	\Prob\bigg(\max_{h_1>0}\frac{ \abs{\G(h_1,\bm{1})}}{\prod_{k=\kappa_\ast}^d \big((h_1)_k+1\big)}>u\bigg) \leq C_d \exp(-u^2/C_d).
	\end{align*}
	Here $C_d>0$ is a constant depending only on $d$.
\end{lemma}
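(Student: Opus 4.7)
The plan is to exploit the Brownian-sheet representation of $\G$ recorded in Remark \ref{rmk:dependence_C_alpha}(1), reduce the problem to a normalized supremum of Brownian sheets over unbounded orthants, estimate the expected supremum by dyadic scaling, and then apply the Borell--TIS inequality to promote this to a sub-Gaussian tail.

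First, I would invoke the representation $\G(h_1,\bm{1})=_d \sum_{i\in\{1,2\}^{d_\ast}}\mathbb{B}_i(v_i(h_1))$ with $\{\mathbb{B}_i\}$ independent Brownian sheets on $\R^{d_\ast}_{\geq 0}$, where the $j$-th coordinate of $v_i(h_1)$ equals $(h_1)_{\kappa_\ast+j-1}$ when $i_j=1$ and equals $1$ when $i_j=2$. Freezing coordinates of a Brownian sheet at value $1$ yields, by direct covariance calculation, a Brownian sheet in the remaining $|S_i|$ coordinates with $S_i\equiv\{j:i_j=1\}$, and these sub-sheets remain mutually independent across $i$. Since the denominator satisfies $\prod_{k=\kappa_\ast}^d((h_1)_k+1)\geq\prod_{k:\,k-\kappa_\ast+1\in S_i}((h_1)_k+1)$ (the frozen coordinates contribute a factor $\geq 1$), matters reduce to showing that
\[
M_m\equiv \E\sup_{h\in(0,\infty)^m}\frac{|\mathbb{B}(h)|}{\prod_{k=1}^m(h_k+1)}<\infty
\]
for every $0\leq m\leq d_\ast$ and an $m$-dimensional Brownian sheet $\mathbb{B}$ (the $m=0$ case being trivial).

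Second, I would establish $M_m<\infty$ by dyadic decomposition. Write $(0,\infty)^m=\bigcup_{j\in\mathbb{Z}^m}A_j$ with $A_j\equiv\prod_{k=1}^m[2^{j_k-1},2^{j_k}]$. The Brownian-sheet scaling identity $\mathbb{B}(2^{j_1}t_1,\ldots,2^{j_m}t_m)=_d\prod_k 2^{j_k/2}\cdot\mathbb{B}'(t)$ for another Brownian sheet $\mathbb{B}'$ gives $\E\sup_{h\in A_j}|\mathbb{B}(h)|\leq c_m\prod_k 2^{j_k/2}$, while on $A_j$ the denominator obeys $\prod_k(h_k+1)\geq \prod_k 2^{(j_k-1)\vee 0}$. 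Combining,
\[
\E\sup_{h\in A_j}\frac{|\mathbb{B}(h)|}{\prod_{k=1}^m(h_k+1)}\leq c_m\prod_{k=1}^m 2^{j_k/2-(j_k-1)\vee 0},
\]
whose sum over $j\in\mathbb{Z}^m$ factors as a product of one-dimensional series, each of which decays geometrically on both tails of $j_k$ and is therefore summable.

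Finally, the centered separable Gaussian process $Z(h_1)\equiv\G(h_1,\bm{1})/\prod_{k=\kappa_\ast}^d((h_1)_k+1)$ has pointwise variance $1/\prod_k((h_1)_k+1)\leq 1$, and by the preceding two steps $\E\sup_{h_1>0}|Z(h_1)|\leq\sum_{i\in\{1,2\}^{d_\ast}}M_{|S_i|}\leq C_d$. The Borell--TIS inequality then yields $\Prob(\sup|Z|>\E\sup|Z|+u)\leq 2e^{-u^2/2}$ for all $u\geq 0$, which after trivially handling the regime $u\leq 2\E\sup|Z|$ absorbs into the claimed bound $C_d\exp(-u^2/C_d)$. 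The main technical obstacle is the dyadic bookkeeping needed for $M_m$; once that finiteness is in hand, the Gaussian concentration step is standard.
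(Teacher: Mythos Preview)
Your proof is correct but follows a different path from the paper's. Both arguments dyadically decompose the parameter space, but they organize the concentration step differently.

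The paper works with the unnormalized process $G(h)\equiv\G(h,\bm{1})$ directly. On each dyadic block $\{2^{\ell_k}-1\leq h_k\leq 2^{\ell_k+1}-1\}$ it bounds the expected supremum of $|G|$ by Dudley's entropy integral (obtaining roughly $\sqrt{\prod_k 2^{\ell_k}\log(\prod_k 2^{\ell_k})}$), then applies Borell--TIS on that block with the block's variance $\lesssim\prod_k 2^{\ell_k}$, and finally sums the resulting tail probabilities over all dyadic indices. In particular, Gaussian concentration is invoked once per block and the tails are summed.

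Your argument instead normalizes first: you show the whole process $Z(h_1)=\G(h_1,\bm{1})/\prod_k((h_1)_k+1)$ has uniformly bounded variance and finite expected supremum, then apply Borell--TIS once globally. The expected-supremum bound comes from the Brownian-sheet representation (Remark~\ref{rmk:dependence_C_alpha}(1)) and scaling, rather than from Dudley's entropy integral. This is cleaner and avoids the metric-entropy machinery; the trade-off is that it leans on the explicit sheet decomposition, which is available under the simplified covariance (\ref{eqn:cov_G_simple}) the paper restricts to anyway. The paper's blockwise approach, by contrast, would extend more mechanically to processes where only the covariance bounds (and not a sheet representation) are known.
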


\begin{proof}[Proofs]
See Section \ref{section:proof_lemmas}.
\end{proof}

\begin{proof}[Proof of Proposition \ref{prop:localize_limit_range}]
For simplicity of notation we assume $\sigma=1$ without loss of generality and set $a_{\bm{j}}\equiv \partial^{\bm{j}} f_0(x_0)/ (\bm{j}+\bm{1})!$. The strategy of the proof largely follows that of Proposition \ref{prop:localize_estimator_range}, but with some simplifications. Let 
\begin{align}\label{def:U}
\mathbb{U}(h_1,h_2)\equiv \frac{\G(h_1,h_2)}{\prod_{k=\kappa_\ast}^d \big((h_1)_k+(h_2)_k\big)}+\sum_{ \substack{\bm{j} \in J_\ast,\\ j_k =0,1\leq k\leq \kappa_\ast-1}}a_{\bm{j}} \prod_{k=\kappa_\ast}^s \frac{(h_2)_k^{j_k+1}-(-h_1)_k^{j_k+1}}{(h_2)_k+(h_1)_k}.
\end{align}
Let $h_1^\ast, h_2^\ast \in \R^d_{\geq 0}$ be such that $W = \mathbb{U}(h_1^\ast,h_2^\ast)$. Since the Gaussian process $\G$ only depends on the last $d_\ast$ coordinates of its arguments, we may assume that $(h_i^\ast)_\ell = 0$ for $1\leq \ell\leq \kappa_\ast-1$ and $i=1,2$.

\noindent \textbf{(Step 1).} We will first show that
 \begin{align}\label{ineq:localization_limit_1}
	\lim_{c \to \infty} \Prob(h_1^\ast \notin H_c^\ast)\vee \Prob(h_2^\ast \notin H_c^\ast)=0,
\end{align}
where $H_c^\ast$ is defined in (\ref{def:H_c}). We only need to  prove that $\{\max_{\kappa_\ast \leq k\leq s} (h_2^\ast)_k\leq c\}$ holds with large probability for $c$ large. Using a similar argument for the inequality as in the proof of Lemma \ref{lem:taylor_expansion}, on the event $\{\max_{\kappa_\ast \leq k\leq s} (h_2^\ast)_k>c\}$, 
\begin{align*}
W &\geq \mathbb{U}(\bm{1}, h_2^\ast) =\frac{\G(\bm{1},h_2^\ast)}{\prod_{k=\kappa_\ast}^d \big((h_2^\ast)_k+1)} + \sum_{ \substack{\bm{j} \in J_\ast,\\ j_k =0,1\leq k\leq \kappa_\ast-1}} a_{\bm{j}} \prod_{k=\kappa_\ast}^s \frac{(h_2^\ast)_k^{j_k+1}-(-1)^{j_k+1}}{(h_2^\ast)_k+1}\\
&\geq  -\sup_{h_2\geq 0}\frac{ \abs{\G(\bm{1},h_2)}}{\prod_{k=\kappa_\ast}^d \big((h_2)_k+1\big)}+ \mathcal{O}\left(\max_{\kappa_\ast\leq k\leq s} \big(1\vee (h_2^\ast)_k^{\alpha_k}\big)\right) \\
&\geq - \abs{\mathcal{O}_{\mathbf{P}}(1)}+\mathcal{O}(c).
\end{align*}
The last inequality follows from Lemma \ref{lem:brownian_motion_uniform}. On the other hand, by Lemma \ref{lem:finite_expectation_W} we know that $W =\mathcal{O}_{\mathbf{P}}(1)$, this means that necessarily $
\lim_{c \to \infty}\Prob(h_2^\ast \notin H_c^\ast) =0$. Similarly we can show that $\lim_{c \to \infty}\Prob(h_1^\ast \notin H_c^\ast) =0$, thereby proving the claim (\ref{ineq:localization_limit_1}).

\noindent \textbf{(Step 2).} Next we handle the small deviation problem. Using similar arguments as in the proof of Proposition \ref{prop:localize_estimator_range}, on the intersection of the event $\{ (h_2^\ast)_d<c^{-\gamma_\ast}\}$ and an event with probability at least $1-4\epsilon$, it holds that
\begin{align*}
W&\geq  \max\nolimits_{ \substack{0\leq (h_1)_k\leq c^a\bm{1}_{\kappa_\ast \leq k\leq d}\\ 0\leq (h_1)_d\leq c^{-b}} } \min\nolimits_{ \substack{0\leq (h_2)_k\leq c\bm{1}_{\kappa_\ast\leq k\leq d}\\ 0\leq (h_2)_d\leq c^{-\gamma_\ast}}} \frac{\G(h_1,h_2)}{\prod_{k=\kappa_\ast}^d (h_1+h_2)_k}-C_4\cdot c^{a\max_{\kappa_\ast \leq k\leq s} \alpha_k}\\
&\geq c^{a\max_{\kappa_\ast \leq k\leq s} \alpha_k}\big(C_3\cdot c^{(b-as_\ast)/2-a\max_{\kappa_\ast \leq k\leq s} \alpha_k}-C_4 \big)\to \infty
\end{align*}
as $c \to \infty$ by choosing $a\geq3, b \geq 2(1+a\max_{\kappa_\ast \leq k\leq s} \alpha_k)+as_\ast$ and $\gamma_\ast = b+1$. The claim follows from Lemma \ref{lem:finite_expectation_W}.
\end{proof}

\subsection{Completion of the proof of Theorem \ref{thm:limit_distribution_pointwise} for $1\leq \kappa_\ast<s+1$}\label{section:proof_completion}

\begin{proof}[Proof of Theorem \ref{thm:limit_distribution_pointwise}]
By Proposition \ref{prop:local_delocal} combined with Propositions \ref{prop:localize_estimator_range}-\ref{prop:localize_limit_range}, it follows that $\omega_n^{-1}\big(\hat{f}_n(x_0)-f_0(x_0)\big)$ converges in distribution to the desired random variable (up to a scaling factor of $\sigma$). Hence we only need to verify the distributional equality in the statement of the theorem, when all mixed derivatives of $f_0$ vanish at $x_0$ in $J_\ast$. To this end, let $\mathbb{U}$ be defined as in the proof of Proposition \ref{prop:localize_limit_range} in (\ref{def:U}) (with all mixed derivative terms vanishing). Then for $\gamma_0,\gamma_1=\ldots=\gamma_{\kappa_\ast-1}=0, \gamma_{\kappa_\ast}\ldots,\gamma_s,\gamma_{s+1}=\ldots=\gamma_d=1$ such that
\begin{align}\label{ineq:limit_dist_rescale}
&\gamma_0 \bigg(\prod_{k=\kappa_\ast}^d \gamma_k\bigg)^{-1/2}=\sigma ,\quad  \gamma_0\gamma_k^{\alpha_k} = \frac{\partial_k^{\alpha_k} f_0(x_0)}{(\alpha_k+1)!},\quad \kappa_\ast \leq k\leq s.
%& \gamma_1=\ldots=\gamma_{\kappa_\ast-1}=0,\nonumber
\end{align}
we have
\begin{align*}
&\gamma_0\cdot \mathbb{U}\left( \big(\gamma_k (h_1)_k\big)_{k=1}^d, \big(\gamma_k (h_2)_k\big)_{k=1}^d\right)\\
& = \gamma_0 \cdot \frac{\G\left(\big(\gamma_k (h_1)_k\big)_{k=1}^d, \big(\gamma_k (h_2)_k\big)_{k=1}^d\right)}{\prod_{k=\kappa_\ast}^d \gamma_k \cdot \prod_{k=\kappa_\ast}^d \big((h_1)_k+(h_2)_k\big)}+\gamma_0 \sum_{k=\kappa_\ast}^s \gamma_k^{\alpha_k}\frac{ (h_2)_k^{\alpha_k+1}-(h_1)_k^{\alpha_k+1}}{(h_2)_k+(h_1)_k}\\
& =_d \bigg[\gamma_0 \bigg(\prod_{k=\kappa_\ast}^d \gamma_k\bigg)^{-1/2}\bigg]\cdot \frac{ \G(h_1,h_2)}{\prod_{k=\kappa_\ast}^d \big((h_1)_k+(h_2)_k\big)}+ \sum_{k=\kappa_\ast}^s (\gamma_0 \gamma_k^{\alpha_k}) \frac{ (h_2)_k^{\alpha_k+1}-(h_1)_k^{\alpha_k+1}}{(h_2)_k+(h_1)_k}\\
& = \frac{\sigma\cdot  \G(h_1,h_2)}{\prod_{k=\kappa_\ast}^d \big((h_1)_k+(h_2)_k\big)}+\sum_{k=\kappa_\ast}^s  \frac{\partial_k^{\alpha_k} f_0(x_0)}{(\alpha_k+1)!} \frac{ (h_2)_k^{\alpha_k+1}-(h_1)_k^{\alpha_k+1}}{(h_2)_k+(h_1)_k}.
\end{align*}
This shows that under the choice (\ref{ineq:limit_dist_rescale}), 
\begin{align*}
&\omega_n^{-1}\big(\hat{f}_n(x_0)-f_0(x_0)\big)\\
&\rightsquigarrow \sup_{ \substack{h_1>0,\\ (h_1)_k\leq (x_0)_k, \\s+1\leq k\leq d}}\inf_{\substack{h_2>0,\\ (h_2)_k\leq (1-x_0)_k, \\s+1\leq k\leq d}}\gamma_0\cdot \mathbb{U}\left( \big(\gamma_k (h_1)_k\big)_{k=1}^d, \big(\gamma_k (h_2)_k\big)_{k=1}^d\right)\\
&= \sup_{ \substack{h_1>0,\\ (h_1)_k\leq (x_0)_k, \\s+1\leq k\leq d}}\inf_{\substack{h_2>0,\\ (h_2)_k\leq (1-x_0)_k, \\s+1\leq k\leq d}} \gamma_0\cdot\mathbb{U}(h_1,h_2).
\end{align*}
Finally we only need to note that solving (\ref{ineq:limit_dist_rescale}) yields that 
\begin{align*}
\gamma_0 = \bigg(\sigma^2 \prod_{k=\kappa_\ast}^s \bigg(\frac{\partial_k^{\alpha_k} f_0(x_0)}{(\alpha_k+1)!}\bigg)^{1/\alpha_k}\bigg)^{\frac{1}{2+\sum_{k=\kappa_\ast}^s \alpha_k^{-1}}}.
\end{align*}
This completes the proof.
\end{proof}

\subsection{Proof of Theorem \ref{thm:limit_distribution_pointwise} for $\kappa_\ast = s+1$}

\begin{proof}[Proof of Theorem \ref{thm:limit_distribution_pointwise}]	
The strategy of the proof follows the general principle developed for the case $\kappa_\ast <s+1$, so we only provide a sketch for the fixed lattice design case. 

First, by similar arguments as in the proof of Proposition \ref{prop:rate_estimator}, we can establish a local rate of convergence
\begin{align}\label{ineq:proof_noise_case_1}
n_\ast^{1/2}\big(\hat{f}_n(x_0)-f_0(x_0)\big) = \mathcal{O}_{\mathbf{P}}(1).
\end{align}
Second, note that $(h_1^\ast)_k\leq (x_0)_k, (h_2^\ast)_k\leq (1-x_0)_k$ for $s+1\leq k\leq d$ so there is no large deviation problem. For the small deviation problem, let $2<b<\gamma_\ast$ be some fixed constants, and consider the event $\Omega_c^{(1)}\equiv \{ (h_2^\ast)_d < c^{-\gamma_\ast}\}$. We may show, similar to Lemma \ref{lem:size_difference_gaussian_small_dev}, that there exists some $C_1 = C_1(\sigma,d)$ such that for $c>0, n\in \N$ large enough, the event 
\begin{align*}
\Omega^{(2)}_\epsilon\equiv \bigg\{\sup_{ \substack{0\leq (h_1)_k\leq (x_0)_k \bm{1}_{s+1\leq k\leq d} \\  0\leq (h_1)_d\leq c^{-b} \\0\leq (h_2)_k \leq (1-x_0)_k\bm{1}_{s+1\leq k\leq d}\\ (h_2)_d\leq c^{-\gamma_\ast} }} \abs{\G_n(h_1,h_2)-\G_n(h_1,h_2\bm{1}_{[1:d-1]})}\leq (C_1/\epsilon) \sqrt{c^{-\gamma_\ast} \log c}\bigg\}
\end{align*} 
holds with probability at least $1-\epsilon$ for $n$ large enough. By Lemma \ref{lem:small_deviation_min_max}, there exists some $C_2=C_2(\epsilon)>0$ such that the event $\Omega_\epsilon^{(3)}\equiv \{\textrm{for any } 0\leq (h_2)_k\leq (1-x_0)_k \bm{1}_{s+1\leq k\leq d}, (h_2)_d = 0, \textrm{ there exists } 0\leq (h_1)_k\leq (x_0)_k\bm{1}_{s+1\leq k\leq d}, (h_1)_d\leq c^{-b}\textrm{ such that }\G_n(h_1,h_2\bm{1}_{[1:d-1]})\geq C_2 \sqrt{c^{-b}}\}$ holds with probability $1-\epsilon$ for $n$ large enough. On the event $\Omega_c^{(1)}\cap \Omega_\epsilon^{(2)}\cap \Omega_\epsilon^{(3)}$, we have
\begin{align*}
&n_\ast^{1/2}\big(\hat{f}_n(x_0)-f_0(x_0)\big)\\
&\geq n_\ast^{1/2}\max_{\substack{0\leq (h_1)_k\leq (x_0)_k \bm{1}_{s+1\leq k\leq d}\\ 0\leq (h_1)_d \leq c^{-b}}} \bar{\xi}|_{[x_0-h_1\bm{1}_{[s+1:d]}, x_0+h_2^\ast\bm{1}_{[s+1:d]}]}\\
%&\geq \max_{\substack{0\leq (h_1)_k\leq (x_0)_k \bm{1}_{s+1\leq k\leq d}\\ 0\leq (h_1)_d \leq c^{-b}}}  \frac{\G_n(h_1,h_2^\ast)}{\prod_{k=s+1}^d (h_1+h_2^\ast)_k}\\
&\geq \max_{\substack{0\leq (h_1)_k\leq (x_0)_k \bm{1}_{s+1\leq k\leq d}\\ 0\leq (h_1)_d \leq c^{-b}}}  \frac{\G_n(h_1,h_2^\ast\bm{1}_{[1:d-1]})-(C_1/\epsilon)\sqrt{c^{-\gamma_\ast}\log c}}{n_\ast^{-1}\cdot \prod_{k=s+1}^d  \big( \floor{(h_1r_n)_k n^{\beta_k}} +\floor{(h_2^\ast r_n)_k n^{\beta_k}} +1 \big)}\\
&\geq \frac{C_2\sqrt{c^{-b}} - (C_1/\epsilon) \sqrt{c^{-\gamma_\ast} \log  c}}{(c^{-b}+c^{-\gamma_\ast})(1+\mathfrak{o}(1))}.
\end{align*}
Hence for $c>0, n\in \N$ large enough, on the intersection of $\Omega_c^{(1)}$ and an event with probability at least $1-2\epsilon$, 
\begin{align*}
n_\ast^{1/2}\big(\hat{f}_n(x_0)-f_0(x_0)\big)\geq C_3\cdot c^{b/2},
\end{align*}
where $C_3=C_3(C_1,C_2,\epsilon)$. However, by (\ref{ineq:proof_noise_case_1}), this cannot occur with high probability for large $c>0$. This concludes the small deviation problem. The rest of the proofs parallels that in the case $\kappa_\ast<s+1$ so we omit details.
\end{proof}

\section{Proof of Theorem \ref{thm:minimax_lower_bound}}\label{section:proof_minimax}

We first prove Proposition \ref{prop:minimax}.

\begin{proof}[Proof of Proposition \ref{prop:minimax}]
The proof is basically contained in \cite{tsybakov2008introduction}. We provide some details for the convenience of the reader, only in the context of fixed lattice design case. The random design follows from similar arguments. To match the notation used therein, let $s_n = \gamma_n/2$. Note that
\begin{align*}
&\inf_{\tilde{f}_n} \sup_{ f\in \{f_n,f_0\} } \E_{f} \big[s_n^{-1} \abs{\tilde{f}_n(x_0)-f(x_0)}\big]\\
&\geq \inf_{\tilde{f}_n} \sup_{ f\in \{f_n,f_0\} } \Prob_{f} \big[ \abs{\tilde{f}_n(x_0)-f(x_0)}\geq s_n\big]\geq p_{e,1},
\end{align*}
where the quantity $p_{e,1}$ is defined in page 80 of \cite{tsybakov2008introduction}. Let $P_f^n$ denote the probability measure corresponding to the Gaussian regression model $Y_i=f(X_i)+\xi_i, 1\leq i\leq n$. Then the Kullback-Leibler divergence between $P_f^n$ and $P_g^n$ is given by the rescaled discrete $\ell_2$ distance between $f$ and $g$: 
\begin{align*}
D_{\textrm{KL}}(P_f^n||P_g^n)= \frac{n}{2\sigma^2}\ell_2^2(f,g)= \frac{1}{2\sigma^2}\cdot \sum_{i=1}^n \big(f(X_i)-g(X_i)\big)^2.
\end{align*}
Now Theorem 2.2 of \cite{tsybakov2008introduction} applies to conclude.
\end{proof}

\begin{proof}[Proof of Theorem \ref{thm:minimax_lower_bound}]
We assume that $f_0$ is locally $C^{\max_{1\leq k\leq s} \alpha_k}$ at $x_0$ with vanishing mixed derivatives for simplicity of notation. The lower bound for the case $\kappa_\ast = s+1$ is trivial, so we only consider the case $\kappa_\ast<s+1$. Consider the fixed lattice design case. Let $r_n$ be defined as in (\ref{def:r_n}), and $h$ be a vector defined by
\begin{align*}
h_k&\equiv \bigg(\frac{\gamma (\alpha_k+1)!}{\partial_k^{\alpha_k} f_0(x_0)}\bigg)^{1/\alpha_k} \bm{1}_{\kappa_\ast \leq k\leq s}+\tau \bm{1}_{s+1\leq k\leq d},
\end{align*}
where
\begin{align*}
\gamma \equiv \bigg(\frac{2\sigma^2}{ s_\ast (2d \pnorm{\bm{\alpha}}{\infty})^{d+2\pnorm{\bm{\alpha}}{\infty}} \sum\limits_{\kappa_\ast\leq k\leq s}(\alpha_k+1)}\prod_{k=\kappa_\ast}^s \bigg(\frac{\partial_k^{\alpha_k} f_0(x_0)}{(\alpha_k+1)!}\bigg)^{1/\alpha_k}\bigg)^{\frac{1}{2+\sum_{k=\kappa_\ast}^s \alpha_k^{-1}}},
\end{align*}
and $\tau>0$ is a small enough fixed constant. Consider local perturbation functions 
\begin{align*}
f_n(x) \equiv \big(f_0(x)\wedge f_0(x_0-hr_n)\big)\bm{1}_{x\leq x_0} + f_0(x)\bm{1}_{x \nleq x_0}.
\end{align*}
Clearly $f_n \in \mathcal{F}_d$ by construction. Let $\mathcal{X}_n \equiv \{x \in \{X_i\}: f_n(x)\neq f_0(x)\}$. We claim that for $n$ large enough,
\begin{align}\label{ineq:X_n_range}
\mathcal{X}_n \subset [x_0-2d\pnorm{\bm{\alpha}}{\infty}\cdot hr_n, x_0].
\end{align}
To prove (\ref{ineq:X_n_range}), it suffices to show that for any $x \in \{X_i\}$, $x_0-\epsilon_0\bm{1}\leq x\leq x_0, x\ngeq x_0-2d\pnorm{\bm{\alpha}}{\infty}\cdot hr_n$, we have $f_0(x)<f_0(x_0-hr_n)$. To see this, first note that
\begin{align*}
f_0(x_0-hr_n)-f_0(x_0)&=-\omega_n  \sum_{k=\kappa_\ast}^s \frac{\partial_k^{\alpha_k} f_0(x_0)\big(1+\mathfrak{o}(1)\big)}{\alpha_k!}  h_k^{\alpha_k}\\
& \geq  -\omega_n \cdot s_\ast (\pnorm{\bm{\alpha}}{\infty}+1) \gamma (1+\mathfrak{o}(1)).
\end{align*}
On the other hand,
\begin{align*}
f_0(x) - f_0(x_0)&\leq -\omega_n \sum_{k=\kappa_\ast}^s \frac{\partial_k^{\alpha_k} f_0(x_0)\big(1+\mathfrak{o}(1)\big) }{\alpha_k!} (2d\pnorm{\bm{\alpha}}{\infty} h_k)^{\alpha_k} \bm{1}_{x_k\leq (x_0-2d \pnorm{\bm{\alpha}}{\infty}\cdot h r_n)_k}\\
&\qquad - \sum_{k=1}^{\kappa_\ast-1}  \frac{\partial_k^{\alpha_k} f_0(x_0)\big(1+\mathfrak{o}(1)\big) }{\alpha_k!} n^{-\alpha_k\beta_k} \bm{1}_{x_k\leq (x_0)_k-n^{-\beta_k}} \\
&\leq -\omega_n \cdot (2d\pnorm{\bm{\alpha}}{\infty}) \gamma (1+\mathfrak{o}(1)) \bm{1}_{\exists \kappa_\ast \leq k\leq s, x_k\leq (x_0-2d \pnorm{\bm{\alpha}}{\infty}\cdot h r_n)_k}\\
&\qquad - \mathcal{O}(n^\delta)\cdot  \omega_n \bm{1}_{\exists 1\leq k\leq \kappa_\ast-1, x_k\leq (x_0)_k - n^{-\beta_k}}.
\end{align*}
Here $\delta>0$ by Proposition \ref{prop:comparision_local_rates}. Comparing the above two displays proves the claim (\ref{ineq:X_n_range}). Therefore, with
\begin{align*}
\bar{f}_n(x)\equiv 
\begin{cases}
f_0(x_0-2d\pnorm{\bm{\alpha}}{\infty}\cdot hr_n) & \textrm{ if } x \in [x_0-2d\pnorm{\bm{\alpha}}{\infty}\cdot hr_n,x_0],\\
f_0(x) & \textrm{ otherwise},
\end{cases}
\end{align*}
we have for $n$ large enough, in the fixed lattice design case,
\begin{align*}
&n\ell_2^2(f_n,f_0)\leq n\ell_2^2(\bar{f}_n,f_0)\\
&=\sum_{X_i \in [x_0-2d\pnorm{\bm{\alpha}}{\infty}\cdot hr_n,x_0]} \bigg[\sum_{k=\kappa_\ast}^s\frac{\partial_k^{\alpha_k}f_0(x_0)}{\alpha_k!}  (1+\mathfrak{o}(1)) \big((X_i)_k-\big((x_0)_k-2d\pnorm{\bm{\alpha}}{\infty}\cdot hr_n\big) \big)^{\alpha_k}\bigg]^2\\
&\leq s_\ast \sum_{k=\kappa_\ast}^s \bigg(\frac{\partial_k^{\alpha_k} f_0(x_0)}{\alpha_k!} \bigg)^2 (1+\mathfrak{o}(1))\\
&\qquad\qquad\times \sum_{X_i \in [x_0-2d\pnorm{\bm{\alpha}}{\infty}\cdot hr_n,x_0]} \big((X_i)_k-\big((x_0)_k-2d\pnorm{\bm{\alpha}}{\infty}\cdot hr_n\big)\big)^{2\alpha_k}\\
& = s_\ast \sum_{k=\kappa_\ast}^s \bigg(\frac{\partial_k^{\alpha_k} f_0(x_0)}{\alpha_k!} \bigg)^2 (1+\mathfrak{o}(1))\\
&\qquad\qquad \times\prod_{\ell \neq k,\ell \geq \kappa_\ast} \big(2d \pnorm{\bm{\alpha}}{\infty}\cdot h_\ell (r_n)_{\ell} n^{\beta_\ell}\big) \sum_{1\leq m\leq 2d \pnorm{\bm{\alpha}}{\infty}\cdot h_k (r_n)_k n^{\beta_k}}\bigg(\frac{m}{n^{\beta_k}}\bigg)^{2\alpha_k}\\
& = s_\ast \sum_{k=\kappa_\ast}^s \bigg(\frac{\partial_k^{\alpha_k} f_0(x_0)}{\alpha_k!} \bigg)^2 (1+\mathfrak{o}(1))\\
&\qquad\qquad\times \frac{1}{2\alpha_k+1} \prod_{\ell \geq \kappa_\ast} (2d \pnorm{\bm{\alpha}}{\infty}\cdot h r_n)_\ell\cdot (2d \pnorm{\bm{\alpha}}{\infty}\cdot h r_n)_k^{2\alpha_k} n^{\sum_{k=\kappa_\ast}^d \beta_k}\\
& \leq s_\ast (2d \pnorm{\bm{\alpha}}{\infty})^{d+2\pnorm{\bm{\alpha}}{\infty}} (1+\mathfrak{o}(1)) \prod_{k=\kappa_\ast}^d h_k\cdot \sum_{k=\kappa_\ast}^s \bigg(\frac{\partial_k^{\alpha_k} f_0(x_0) }{(\alpha_k+1)!} h_k^{\alpha_k}\bigg)^2(\alpha_k+1)\\
& \leq \bigg(s_\ast (2d \pnorm{\bm{\alpha}}{\infty})^{d+2\pnorm{\bm{\alpha}}{\infty}} \sum_{k=\kappa_\ast}^s (\alpha_k+1)\bigg)(1+\mathfrak{o}(1)) \prod_{k=\kappa_\ast}^s \bigg(\frac{(\alpha_k+1)!}{\partial_k^{\alpha_k}f_0(x_0)}\bigg)^{1/\alpha_k}\cdot  \gamma^{2+\sum_{k=\kappa_\ast}^s \alpha_k^{-1}}\\
& = 2\sigma^2(1+\mathfrak{o}(1)).
\end{align*}
In the random design case, we can deduce the same inequality as above up to a constant factor depending on $P$. On the other hand, it follows from direct Taylor expansion that
\begin{align*}
&\abs{f_n(x_0)-f_0(x_0)}\geq \omega_n  s_\ast \gamma (1+\mathfrak{o}(1)) \\
& = \omega_n (1+\mathfrak{o}(1)) \bigg(\frac{2\sigma^2 (s_\ast)^{1+\sum_{k=\kappa_\ast}^s \alpha_k^{-1}}} {(2d \pnorm{\bm{\alpha}}{\infty})^{d+2\pnorm{\bm{\alpha}}{\infty}}\sum\limits_{\kappa_\ast\leq k\leq s}(\alpha_k+1)}\prod_{k=\kappa_\ast}^s \bigg(\frac{\partial_k^{\alpha_k} f_0(x_0)}{(\alpha_k+1)!}\bigg)^{1/\alpha_k}\bigg)^{\frac{1}{2+\sum_{k=\kappa_\ast}^s \alpha_k^{-1}}}.
\end{align*}
Now apply Proposition \ref{prop:minimax} to conclude the lower bound when $\kappa_\ast < s+1$. 

Use techniques similar to the proof of the inequality of Lemma \ref{lem:taylor_expansion}, we may control the cross terms with mixed derivatives in the Taylor expansion when such terms do not vanish.
\end{proof}

\section{Remaining proofs}\label{section:proof_lemmas}

\subsection{Proof of Lemma \ref{lem:mixed_derivative_vanish}}

	\begin{proof}[Proof of Lemma \ref{lem:mixed_derivative_vanish}]
		(1) If $\alpha_k=1$, then it is easy to see by monotonicity that the assumption requires $\partial_k^{\alpha_k}f_0(x_0)>0$. If $\alpha_k>1$, since for any $x \in [0,1]^d$ close enough to $x_0$ with $x_j = (x_0)_j, j\neq k$,
		\begin{align*}
		0\leq \partial_k^{1} f_0(x) = \frac{\partial_k^{\alpha_k}f_0(x_0)}{(\alpha_k-1)!} (x-x_0)_k^{\alpha_k-1}+\mathfrak{o}\big((x-x_0)_k^{\alpha_k-1}\big),
		\end{align*}
		it follows that $\alpha_k$ must be odd and $\partial_k^{\alpha_k} f_0(x_0)>0$. 
		
		\noindent (2) Let $J_0 \equiv \{\bm{j} \in J: 0<\sum_{k=1}^s j_k/\alpha_k<1\}$. Suppose $\partial^{\bm{j}}f_0(x_0)\neq 0$ for some $\bm{j} \in J_0$. We will prove that $\partial_k^{\alpha_k'} f_0(x_0)\neq 0$ for some $1\leq \alpha_k'<\alpha_k$ and $1\leq k\leq s$. Let $\omega_n \searrow 0$ and $r_n\equiv (\omega_n^{1/\alpha_1},\ldots,\omega_n^{1/\alpha_d})$, and $D_n \equiv \max_{\bm{0}\neq \bm{j} \in J, \partial^{\bm{j}} f_0(x_0)\neq 0} r_n^{\bm{j}}/\omega_n$. Since $r_n^{\bm{j}}/\omega_n = \omega_n^{\sum_{k=1}^s j_k/\alpha_k-1}$, we have $D_n \to \infty$. Let $t = (x-x_0)/r_n \in \R^d$ and
		\begin{align*}
		Q(t) = \lim_n \frac{f_0(x_0+tr_n)-f_0(x_0)}{\omega_n D_n} = \sum_{\bm{j} \in J_0} \bigg(\lim_n \frac{r_n^{\bm{j}}}{\omega_n D_n}\bigg) \frac{\partial^{\bm{j}} f_0(x_0) }{\bm{j}!} t^{\bm{j}}.
		\end{align*}
		Then $Q$ is a non-decreasing polynomial in the sense that if $t_1\leq t_2$, we have $Q(t_1)\leq Q(t_2)$. Note that in the above display the summation is over $J_0$ instead of $J$ since we assumed that $\partial^{\bm{j}} f_0(x_0)\neq 0$. Since $Q$ only depends on its first $s$ arguments, we slightly abuse the notation for $Q$ as a polynomial of $(t_1,\ldots,t_s)$ in the sequel. As $\partial_s^{\alpha_s'} f_0(x_0)=0$ for all $1\leq \alpha_s'<\alpha_s$, we have $Q(0,\ldots,0,t_s) = 0$. Assume now $Q(0,\ldots,0, t_{k+1},\ldots,t_s) = 0$ while $Q(0,\ldots,0,t_k,\ldots,t_s)\neq 0$. We may write
		\begin{align*}
		Q(0,\ldots,0, t_k,\ldots,t_s) = \sum_{\alpha_k'\leq j<\alpha_k} Q_j(t_{k+1},\ldots,t_s) t_k^j
		\end{align*}
		with some $\alpha_k'\geq 1$ and $Q_{\alpha_k'}(t_{k+1},\ldots,t_s)\neq 0$. By monotonicity of $Q$, $\alpha_k'$ is odd. Furthermore, for $t_k = \pm 1$ and any $k+1\leq \ell \leq s$, we have
		\begin{align*}
		0\leq \lim_n \omega_n^{-\alpha_k'} \partial_\ell Q(0,\ldots,0, \omega_n t_k,t_{k+1},\ldots,t_s) = t_k^{\alpha_k'} \partial_\ell Q_{\alpha_k'} (t_{k+1},\ldots,t_s).
		\end{align*}
		Hence $\partial_{k}^{\alpha_k'} f_0(x_0)/(\alpha_k')!=Q_{\alpha_k'}(0) = Q_{\alpha_k'}(t_{k+1},\ldots,t_s)\neq 0$. 
		
		The boundedness of $\omega_n^{-1}\abs{f_0(x)-f_0(x_0)}$ for $x \in [0,1]^d$ such that $\abs{(x-x_0)_k}\leq (r_n)_k, 1\leq k\leq d$, follows since for $\bm{j} \in J_\ast$, $\omega_n^{-1}\abs{(x-x_0)^{\bm{j}}} \leq \omega_n^{-1} r_n^{\bm{j}} = 1$.
		
		\noindent (3) Suppose $\mathrm{gcd}(\alpha_{k_1},\alpha_{k_2})=1$ for all $1\leq k_1<k_2\leq s$. Then for any $\bm{j}$, $\sum_{k=1}^d j_k/\alpha_k=1$ and $j_k>0$, we have $\alpha_k|j_k$ and $j_\ell =0$ for all $\ell \neq k$. This means $J_1=\emptyset$. On the other hand, if $\mathrm{gcd}(\alpha_{1},\alpha_{2})>1$, write $\alpha_{1} = m_1 m, \alpha_{2} = m_2 m$ for some positive odd numbers $m_1,m_2,m$ with $m\geq 3$. Then with $j_{1}\equiv m_1 p, j_2 \equiv m_2 q$ where $p,q\geq 1, p+q =m$ (such $p,q$ exist as $m\geq 3$), we have $j_1/\alpha_1+j_2/\alpha_2 = 1$, so $J_1 \neq \emptyset$.
		
		\noindent (4) Without loss of generality suppose $\alpha_{1} = m_1 m, \alpha_{2} = m_2 m$ for some positive odd numbers $m_1,m_2,m$ with $m\geq 3$ and $d=2$. Consider
		\begin{align*}
		f(x_0+t) \equiv (t_1^{m_1})^m+ (t_1^{m_1})^{m-1} (t_2^{m_2})+ (t_1^{m_1})(t_2^{m_2})^{m-1}+ (t_2^{m_2})^m.
		\end{align*}
		Let $y_i \equiv t_i^{m_i}, i=1,2$ and $g(y) \equiv y_1^m + y_1^{m-1} y_2+y_1y_2^{m-1}+y_2^{m}$. Then $\partial_1 g(y) = m y_1^{m-1} +(m-1) y_1^{m-2} y_2+ y_2^{m-1}\geq 0$, as $\abs{y_1^{m-2} y_2}\leq \frac{m-2}{m-1} y_1^{m-1}+ \frac{1}{m-1} y_2^{m-1}$. So with $x\equiv x_0+t$, $\partial_1 f(x) = (\partial/\partial t_1) f(x_0+t) = (\partial g/\partial y_1)(\partial y_1/\partial t_1) \geq 0$. Similarly $\partial_2 f(x)\geq 0$, so $f \in \mathcal{F}_2$. It is easy to check that $f$ satisfies Assumption \ref{assump:smoothness} with $\bm{\alpha} = (\alpha_1,\alpha_2) = (m_1m,m_2m)$, but $\partial^{m_1(m-1),m_2} f(x_0), \partial^{m_1,m_2(m-1)} f(x_0)\neq 0$.
	\end{proof}

\subsection{Proof of Lemma \ref{lem:size_max_partial_sum}}

\begin{proof}[Proof of Lemma \ref{lem:size_max_partial_sum}]
	Without loss of generality, we assume $\tau = 1$. Let $x_n \equiv x_0+\tau r_n$. We only prove the lemma for the random design case; the fixed lattice design case follows from simpler arguments. Let $s=d$ for simplicity. Let $\mathcal{E}_n$ be the event specified by Lemma \ref{lem:ratio_indicator} with $g\equiv 1$ therein. Then,
	\begin{align*}
	&\E \sup_{h>0} \abs{\bar{\xi}|_{[x_0-h r_n,x_0+\tau r_n]}}\bm{1}_{\mathcal{E}_n} = \E \sup_{h> 1} \abs{\bar{\xi}|_{[x_n-h r_n,x_n]}}\bm{1}_{\mathcal{E}_n}\\
	&\leq \sum_{\ell_k\geq 0, 1\leq k\leq d}\E \sup_{2^{\ell_k}\leq h_k\leq 2^{\ell_k+1}} \frac{\abs{\sum_{i=1}^n \xi_i \bm{1}_{X_i \in [x_n-hr_n,x_n]} }}{n\Prob_n \bm{1}_{X \in [x_n-hr_n,x_n]}  }\bm{1}_{\mathcal{E}_n}\\
	&\leq  \sum_{\ell_k\geq 0, 1\leq k\leq d}\frac{2}{nP\bm{1}_{X \in [x_n-2^{\bm{\ell}}r_n,x_n]} } \E \sup_{2^{\ell_k}\leq h_k\leq 2^{\ell_k+1}} \biggabs{\sum_{i=1}^n \xi_i \bm{1}_{X_i \in [x_n-hr_n,x_n]} }\\
	&\leq C_P\cdot n^{-1}\prod_{k=1}^d (r_n)_k^{-1}\cdot  \sum_{\ell_k\geq 0, 1\leq k\leq d} 2^{-\sum_{k=1}^d \ell_k } \E \sup_{2^{\ell_k}\leq h_k\leq 2^{\ell_k+1}} \biggabs{\sum_{i=1}^n \xi_i \bm{1}_{X_i \in [x_n-hr_n,x_n]} }\\
	&\leq C_{P,d} \cdot n^{-1}\prod_{k=1}^d (r_n)_k^{-1}  \sum_{\ell_k\geq 0, 1\leq k\leq d} 2^{-\sum_{k=1}^d \ell_k } \bigg(\E \biggabs{\sum_{i=1}^n \xi_i \bm{1}_{X_i \in [x_n-2^{\bm{\ell}+1}r_n,x_n]} }^2\bigg)^{1/2}\\
	& \qquad\qquad\qquad (\textrm{by Lemma 2 of \cite{deng2018isotonic}} )\\
	&\leq C_{P,d}'\cdot  \sigma \cdot n^{-1/2}\prod_{k=1}^d (r_n)_k^{-1/2} \sum_{\ell_k\geq 0, 1\leq k\leq d} 2^{-\frac{1}{2}\sum_{k=1}^d \ell_k } \lesssim_{P,d} \sigma \cdot n^{-\frac{1}{2+\sum_{k=1}^{d}\alpha_k^{-1} }},
	\end{align*}
	as desired.
\end{proof}

\subsection{Proof of Lemma \ref{lem:weak_conv_gaussian_proc}}

We need the following to prove Lemma \ref{lem:weak_conv_gaussian_proc}.

\begin{lemma}[\cite{alexander1987central}; see also Theorem 2.11.9 of \cite{van1996weak}]\label{lem:clt_partial_sum_process}
	For each $n$, let $Z_{n1},\ldots,Z_{n m_n}$ be independent stochastic processes indexed by a totally bounded semi-metric space $(\mathcal{F},\rho)$. Suppose that
	\begin{enumerate}
		\item $\sum_{i=1}^{m_n} \E \pnorm{Z_{ni}}{\mathcal{F}}^2 \bm{1}_{\pnorm{Z_{ni}}{\mathcal{F}}>\eta }\to 0$ for every $\eta>0$.
		\item $\sup_{\rho(f,g)<\delta_n} \sum_{i=1}^{m_n} \E\big(Z_{ni}(f)-Z_{ni}(g)\big)^2 \to 0$ for every $\delta_n\to 0$.
		\item $\int_0^{\delta_n} \sqrt{\log \mathcal{N}_{[\,]}(\epsilon, \mathcal{F}, L_2^n)}\ \d{\epsilon}\to 0$ for every $\delta_n\to 0$. Here the bracketing number $\mathcal{N}_{[\,]}(\epsilon, \mathcal{F}, L_2^n)$ is defined as the minimal number of sets $N_\epsilon$ in a partition $\mathcal{F}\equiv \cup_{j=1}^{N_\epsilon} \mathcal{F}_{\epsilon j}^n$ of the index set into sets $\mathcal{F}_{\epsilon j}^n$ such that for every partitioning set $\mathcal{F}_{\epsilon j}^n$,
		\begin{align*}
		\sum_{i=1}^{m_n} \E \sup_{f,g \in \mathcal{F}_{\epsilon j}^n} \abs{Z_{ni}(f)-Z_{ni}(g)}^2\leq \epsilon^2.
		\end{align*}
	\end{enumerate}
	Then the sequence $\sum_{i=1}^{m_n} (Z_{ni}-\E Z_{ni})$ is asymptotically $\rho$-equicontinuous.
\end{lemma}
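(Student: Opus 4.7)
The plan is to prove this classical bracketing central limit theorem for triangular arrays of independent stochastic processes—originally due to \cite{alexander1987central} and recorded as Theorem 2.11.9 in \cite{van1996weak}—by the standard symmetrization–truncation–chaining scheme. The conclusion to establish is asymptotic $\rho$-equicontinuity of $S_n \equiv \sum_{i=1}^{m_n}(Z_{ni}-\E Z_{ni})$, i.e., for every $\eta,\epsilon>0$ one needs to exhibit $\delta>0$ with $\limsup_n \Prob\big(\sup_{\rho(f,g)<\delta}|S_n(f)-S_n(g)|>\eta\big)<\epsilon$.

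First I would symmetrize, replacing $S_n$ by $S_n^\circ \equiv \sum_i \varepsilon_i Z_{ni}$ with independent Rademacher signs $\varepsilon_i$ independent of $\{Z_{ni}\}$, at the cost of a universal factor. Next I would truncate at a level $M$: write $Z_{ni}=Z_{ni}^{\leq M}+Z_{ni}^{>M}$ with $Z_{ni}^{>M}\equiv Z_{ni}\bm{1}_{\pnorm{Z_{ni}}{\mathcal{F}}>M}$. Hypothesis (1) of the lemma implies $\sum_i\E \pnorm{Z_{ni}^{>M}}{\mathcal{F}}\to 0$ for every fixed $M>0$ (and in fact for $M=M_n\to 0$ sufficiently slowly), so the contribution of the truncated tail to the modulus of continuity is asymptotically negligible by Markov's inequality.

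Next, for the bounded part $Z_{ni}^{\leq M}$, I would run a generic chaining argument along a sequence of bracketing partitions $\mathcal{F}=\cup_{j\leq N_k} \mathcal{F}_{\epsilon_k j}^n$ at dyadic scales $\epsilon_k\equiv 2^{-k}\delta$. For any two indices $f,g$ lying in a common bracket at scale $\epsilon_k$, Bernstein's inequality conditional on $\{Z_{ni}\}$ yields
\begin{align*}
\Prob\bigg(\biggabs{\sum_i \varepsilon_i\big(Z_{ni}^{\leq M}(f)-Z_{ni}^{\leq M}(g)\big)}>u\bigg)\leq 2\exp\bigg(-\frac{u^2}{C(\epsilon_k^2+Mu)}\bigg),
\end{align*}
since the conditional variance is at most $\epsilon_k^2$ by definition of $\mathcal{N}_{[\,]}(\epsilon_k,\mathcal{F},L_2^n)$ and the increments are bounded by $2M$. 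A union bound over the $\mathcal{N}_{[\,]}(\epsilon_k,\mathcal{F},L_2^n)$ brackets at each scale, telescoped along the chain, then reduces the modulus to the entropy integral in hypothesis (3), which vanishes as $\delta_n\to 0$. Hypothesis (2) is used to anchor the chain at its coarsest scale, guaranteeing that the initial bracket variance is $o(1)$.

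The main obstacle will be the calibration between the truncation level $M=M_n$, the modulus $\delta=\delta_n$, and the dyadic scales $\epsilon_k$: one must choose $M_n\to 0$ slowly enough that hypothesis (1) still annihilates the truncated-tail contribution, yet fast enough that the sub-exponential term $M u$ in Bernstein's bound remains dominated by the sub-Gaussian term $\epsilon_k^2$ throughout the chain, so that only the entropy integral in (3) governs the final bound. This delicate trade-off is the substantive content of \cite{alexander1987central}, and since the lemma as stated is exactly Theorem 2.11.9 of \cite{van1996weak}, we invoke it directly in our applications rather than reproducing the calibration in detail.
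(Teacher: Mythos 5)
The paper does not prove this lemma at all: it is recorded verbatim as a known result of \cite{alexander1987central}, in the form of Theorem 2.11.9 of \cite{van1996weak}, and is simply invoked in the proof of Lemma \ref{lem:weak_conv_gaussian_proc}. Your closing move---citing the theorem directly rather than reproducing the calibration---is therefore exactly the paper's treatment, and is the right call. Regarding your sketch of the literature proof, one step as written would not go through: after Rademacher symmetrization, the variance of $\sum_i \varepsilon_i\big(Z_{ni}^{\leq M}(f)-Z_{ni}^{\leq M}(g)\big)$ conditional on the data is the random quantity $\sum_i\big(Z_{ni}(f)-Z_{ni}(g)\big)^2$, whereas the bracketing hypothesis (3) only bounds its expectation, $\sum_i \E\sup_{f,g\in\mathcal{F}^n_{\epsilon j}}\abs{Z_{ni}(f)-Z_{ni}(g)}^2\leq\epsilon^2$; so the conditional Bernstein bound with variance proxy $\epsilon_k^2$ is not justified without an additional concentration or truncation argument for the random bracket widths. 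The actual proof in \cite{van1996weak} avoids this by not symmetrizing: it chains the original centered sums using Bernstein's inequality for independent summands, with truncation levels tied to the envelopes of the brackets at each dyadic scale, and it uses condition (2) to relate $\rho$-closeness to smallness of the $L_2^n$-increments at the coarsest level of the chain (not merely to ``anchor'' it). Since the lemma is quoted rather than proved, these inaccuracies do not affect the paper, but they would matter if one tried to write the sketch out in full.
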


\begin{proof}[Proof of Lemma \ref{lem:weak_conv_gaussian_proc}]
	We will prove a slightly stronger statement by strengthening the convergence in $\ell^\infty\big([c^{-\gamma_\ast}\bm{1},c\bm{1}]\times [c^{-\gamma_\ast}\bm{1},c\bm{1}]\big)$ to convergence in $\ell^\infty\big([0,c\bm{1}]\times [0,c\bm{1}]\big)$. We first verify the finite-dimensional convergence. This is the easy step. For fixed lattice design, for any $(h_1,h_2), (h_1',h_2') \in \R_{\geq 0}^{d}\times \R_{\geq 0}^{d}$, we only need to note that
	\begin{align*}
	&\mathrm{Cov}\big(\G_n(h_1,h_2),\G_n(h_1',h_2')\big)\\
	&=\omega_n^2 \sum_{i,j} \mathrm{Cov}\big(\xi_i \bm{1}_{X_i \in [x_0-h_1 r_n,x_0+h_2 r_n]},\xi_j \bm{1}_{X_j \in [x_0-h_1' r_n,x_0+h_2' r_n]}\big)\\
	&=\omega_n^2 \sum_{X_i \in [x_0-(h_1\wedge h_1')r_n,x_0+(h_2\wedge h_2')r_n]} \E \xi_i^2 \\
	&= \pnorm{\xi_1}{2}^2 (1+\mathfrak{o}(1)) \prod_{k=\kappa_\ast}^d \left( (h_1)_k\wedge (h_1')_k+ (h_2)_k\wedge (h_2')_k\right).
	\end{align*}
	For random design, we may proceed with the above lines conditionally.
	
	Next we verify the asymptotic equicontinuity by Lemma \ref{lem:clt_partial_sum_process}. Let
	\begin{align*}
	Z_{ni}(h_1,h_2)\equiv \omega_n  \xi_{i}\bm{1}_{i: x_0-h_1 r_n\leq X_i\leq x_0+h_2 r_n}.
	\end{align*}
	Let $m_n\equiv n$, $\mathcal{F}\equiv \{(h_1,h_2) \in [0,c\bm{1}]\times [0,c\bm{1}]\subset \R^{d}_{\geq 0}\times \R^{d}_{\geq 0}\}$. Let $\rho$ be defined as follows: for any $(h_1,h_2),(h_1',h_2') \in [0,c\bm{1}]\times [0,c\bm{1}]$,
	\begin{align*}
	\rho\big((h_1,h_2),(h_1',h_2') \big)\equiv \pnorm{ \pi_{\kappa_\ast}(h_1-h_1')}{}+\pnorm{\pi_{\kappa_\ast}(h_2-h_2')}{}
	\end{align*}
	where $\pnorm{\cdot}{}$ is the standard Euclidean norm, and $\pi_{\kappa_\ast}: \R^d \to \R^{d_\ast}$ is the canonical projection onto last $d_\ast$ coordinates. We now verify conditions (1)-(3) of Lemma \ref{lem:clt_partial_sum_process} as follows.
	
	For condition (1), note that $
	\pnorm{Z_{ni}}{\mathcal{F}}= \omega_n \abs{\xi_{i}}\bm{1}_{i: x_0-c r_n\leq X_i\leq x_0+c r_n}$. Hence for any $\eta>0$, in the fixed lattice design case,
	\begin{align*}
	\sum_{i=1}^{m_n} \E \pnorm{Z_{ni}}{\mathcal{F}}^2 \bm{1}_{\pnorm{Z_{ni}}{\mathcal{F}}>\eta }& \leq \sum_{i=1}^n \omega_n^2  \E \abs{\xi_i}^2 \bm{1}_{i: x_0-c r_n\leq X_i\leq x_0+c r_n} \cdot \bm{1}_{ \abs{\xi_i}\omega_n>\eta}\\
	& = \omega_n^2  \E \abs{\xi_1}^2 \bigg(\sum_{i=1}^n \bm{1}_{i: x_0-c r_n\leq X_i\leq x_0+c r_n}\bigg)\cdot \bm{1}_{ \abs{\xi_1}\omega_n >\eta}\\
	& \leq \omega_n^2 \prod_{k=\kappa_\ast}^d (2c (r_n)_k n^{\beta_k}) \cdot \E \abs{\xi_1}^2 \bm{1}_{ \abs{\xi_1}\omega_n >\eta}  \\
	&= (2c)^{d_\ast}  \E \abs{\xi_1}^2 \bm{1}_{ \abs{\xi_1}\omega_n>\eta}\to 0
	\end{align*}
	as $n \to \infty$. The random design case follows from obvious modifications using conditioning arguments and the fact that the density $\pi$ is bounded away from $\infty$.

	For condition (2), for any $(h_1,h_2), (h_1',h_2')$ such that $\rho\big((h_1,h_2),(h_1',h_2') \big)< \delta_n$, in the fixed lattice design case
	\begin{align*}
	&\sum_{i=1}^{m_n} \E\big(Z_{ni}(h_1,h_2)-Z_{ni}(h_1',h_2')\big)^2\\
	&\leq \omega_n^2 \E \abs{\xi_1}^2 \bigg(\sum_{i=1}^n \bm{1}_{X_i \in [x_0-h_1 r_n, x_0+h_2r_n ]\Delta [x_0-h_1' r_n, x_0+h_2' r_n ]} \bigg)\\
	&\leq \omega_n^2  \E \abs{\xi_1}^2  \bigg[ \prod_{k=\kappa_\ast}^d\big(( h_1+h_2)_k (r_n)_k n^{\beta_k}\big)- \prod_{k=\kappa_\ast}^d\big(( h_1\wedge h_1'+h_2\wedge h_2')_k (r_n)_k n^{\beta_k}\big)\\
	&\qquad\qquad+ \prod_{k=\kappa_\ast}^d\big((h_1'+h_2')_k (r_n)_k n^{\beta_k}\big)-\prod_{k=\kappa_\ast}^d\big(( h_1\wedge h_1'+h_2\wedge h_2')_k (r_n)_k n^{\beta_k}\big) \bigg]\\
	&\leq 2\delta_n \cdot (2c)^{d_\ast}\sqrt{d_\ast}\cdot  \E\abs{\xi_1}^2 \to 0.
	\end{align*}
	The last inequality follows by Lemma \ref{lem:product_difference}. The random design case follows similarly. This verifies (2). 
	
	For (3), we can identify $(h_1,h_2) \in \mathcal{F}$ as the indicator $\bm{1}_{[x_0-h_1 r_n,x_0+h_2 r_n]}\equiv \tau(h_1,h_2)$, where $h_i \in [0,c\bm{1}] (i=1,2)$. For each $\epsilon>0$, let $h_\epsilon\equiv \epsilon^2/\big(\E \abs{\xi_1}^2 d_\ast(2c)^{d_\ast} 2\big)$. Then
	\begin{align*}
	\mathcal{F}&\subset \bigcup_{h_1,h_2 \in \{0,h_\epsilon,\ldots, c\}^d } \{\tau(h_1',h_2'): h_i\leq h_i'\leq h_i+h_\epsilon , i=1,2 \}
	\equiv \bigcup_{j=1}^{N_\epsilon} \mathcal{F}^{n}_{\epsilon j},
	\end{align*}
	where $N_\epsilon \leq (c/h_\epsilon)^{2d}$.
	
	Now we show that this is a valid $\epsilon$-bracketing in the sense defined in Lemma \ref{lem:clt_partial_sum_process}. For $j=1,\ldots,N_\epsilon$, $\mathcal{F}_{\epsilon j}^{n}=\{\tau(h_1',h_2'): h_i\leq h_i'\leq h_i+h_\epsilon, i=1,2 \}$ for some $ h_1,h_2\in \{0,h_\epsilon,\ldots, c\}^d $. Then, in the fixed lattice design case,
	\begin{align*}
	&\sum_{i=1}^{m_n} \E \sup_{f,g \in \mathcal{F}_{\epsilon j}^{n}} \abs{Z_{ni}(f)-Z_{ni}(g)}^2\\
	& \leq  \omega_n^2  \E \abs{\xi_1}^2 \bigg(\sum_{i=1}^n\bm{1}_{X_i \in [x_0-h_1 r_n, x_0+h_2r_n ] \Delta [x_0-(h_1-h_\epsilon) r_n, x_0+(h_2-h_\epsilon)r_n ]} \bigg)\\
	&= \omega_n^2  \E \abs{\xi_1}^2 \bigg[\prod_{k=\kappa_\ast}^d\big(( h_1+h_2)_k (r_n)_k n^{\beta_k}\big)-\prod_{k=\kappa_\ast}^d\big((h_1+h_2-2h_\epsilon)_k (r_n)_k n^{\beta_k}\big) \bigg]\\
	&\leq \E \abs{\xi_1}^2 d_\ast (2c)^{d_\ast} (2h_\epsilon)=\epsilon^2.
	\end{align*}
	The random design case follows similarly (with a slight modification of the definition of $h_\epsilon$). On the other hand,
	\begin{align*}
	\int_0^{\delta_n} \sqrt{\log \mathcal{N}_{[\,]}(\epsilon, \mathcal{F}, L_2^n)}\ \d{\epsilon} &\leq \int_0^{\delta_n} \sqrt{2d \log (c/h_\epsilon)}\ \d{\epsilon}\lesssim \int_0^{\delta_n} \sqrt{\log (C_{\xi,d,c}/\epsilon)}\ \d{\epsilon}\to 0
	\end{align*}
	as $\delta_n \to 0$. This verifies (3).
	
	Now we may apply Lemma \ref{lem:clt_partial_sum_process} to conclude the claim of the lemma.
\end{proof}

\subsection{Proofs of Lemmas \ref{lem:finite_expectation_W} and \ref{lem:brownian_motion_uniform}}

We need Dudley's entropy integral bound for sub-Gaussian processes, recorded below for the convenience of the reader.

\begin{lemma}[Theorem 2.3.7 of \cite{gine2015mathematical}]\label{lem:dudley_entropy_integral}
	Let $(T,d)$ be a pseudo metric space, and $(X_t)_{t \in T}$ be a sub-Gaussian process such that $X_{t_0}=0$ for some $t_0 \in T$. Then
	\begin{align*}
	\E \sup_{t \in T} \abs{X_t} \leq C\int_0^{\mathrm{diam}(T)} \sqrt{\log \mathcal{N}(\epsilon,T,d)}\ \d{\epsilon}.
	\end{align*}
	Here $C>0$ is a universal constant.
\end{lemma}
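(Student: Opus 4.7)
The plan is to establish this by the classical chaining argument of Kolmogorov--Dudley. First, without loss of generality (passing to a separable modification) I would assume $T$ is countable. Set $\epsilon_n \equiv 2^{-n}\,\mathrm{diam}(T)$ for $n\geq 0$, choose a minimal $\epsilon_n$-net $T_n \subset T$ with $\abs{T_n} = \mathcal{N}(\epsilon_n, T, d)$ (taking $T_0 = \{t_0\}$), and let $\pi_n: T \to T_n$ be the nearest-point projection, so $d(t,\pi_n(t))\leq \epsilon_n$.

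The key step is the telescoping identity
\[
X_t \;=\; X_{t_0} + \sum_{n\geq 1}\bigl(X_{\pi_n(t)} - X_{\pi_{n-1}(t)}\bigr),
\]
valid pointwise for each $t \in T$ once one checks that $d(t,\pi_n(t)) \to 0$ plus sub-Gaussian control gives $X_{\pi_n(t)}\to X_t$. By the triangle inequality $d(\pi_n(t),\pi_{n-1}(t))\leq \epsilon_n+\epsilon_{n-1}\leq 3\epsilon_{n-1}$, so the sub-Gaussian hypothesis on the process yields that each increment $X_{\pi_n(t)}-X_{\pi_{n-1}(t)}$ is sub-Gaussian with variance proxy $\lesssim \epsilon_{n-1}^2$. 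There are at most $\abs{T_n}\cdot \abs{T_{n-1}}\leq \abs{T_n}^2$ such pairs as $t$ varies, so the standard sub-Gaussian maximal inequality $\E\max_{i\leq N}\abs{Y_i}\lesssim \sigma\sqrt{\log N}$ gives
\[
\E \sup_{t\in T}\abs{X_{\pi_n(t)} - X_{\pi_{n-1}(t)}} \;\lesssim\; \epsilon_{n-1}\sqrt{\log \mathcal{N}(\epsilon_n, T, d)}.
\]

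Summing over $n\geq 1$ and using the triangle inequality on $\sup$ yields
\[
\E \sup_{t \in T} \abs{X_t} \;\lesssim\; \sum_{n\geq 1} \epsilon_{n-1}\sqrt{\log \mathcal{N}(\epsilon_n, T, d)}.
\]
Since $\epsilon_{n-1}-\epsilon_n = \epsilon_n$ and $\epsilon\mapsto \sqrt{\log \mathcal{N}(\epsilon,T,d)}$ is non-increasing on $(0,\mathrm{diam}(T)]$, this is a Riemann sum comparable to $C\int_0^{\mathrm{diam}(T)}\sqrt{\log \mathcal{N}(\epsilon,T,d)}\,\d{\epsilon}$, which is the desired conclusion. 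The one place requiring genuine care is the sub-Gaussian maximal inequality with the correct $\sqrt{\log N}$ scaling (which comes from exponentiating the tail bound $\Prob(\abs{Y}>u)\leq 2e^{-u^2/2\sigma^2}$ and a union bound over the $N$ elements), along with a summability/convergence argument justifying that the telescoping series converges to $X_t-X_{t_0}$ almost surely and in $L^1$; the rest of the chaining bookkeeping is routine.
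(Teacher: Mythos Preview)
Your chaining argument is the standard and correct proof of Dudley's entropy integral bound. Note, however, that the paper does not actually prove this lemma: it is recorded as Theorem 2.3.7 of \cite{gine2015mathematical} and simply cited ``for the convenience of the reader,'' with no proof given in the paper itself. So there is nothing to compare against beyond the fact that your proposal reproduces the classical Kolmogorov--Dudley chaining proof that one would find in the cited reference.
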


The following Gaussian concentration inequality will also be useful.
\begin{lemma}[Borell's Gaussian concentration inequality]\label{lem:Gaussian_concentration}
	Let $(T,d)$ be a pseudo metric space, and $(X_t)_{t \in T}$ be a mean-zero Gaussian process. Then with $\sigma^2\equiv \sup_{t \in T} \mathrm{Var}(X_t)$, for any $u >0$,
	\begin{align*}
	\Prob\big( \bigabs{\sup_{t \in T} \abs{X_t} - \E \sup_{t \in T} \abs{X_t}}>u\big)\le 2\exp\big(-u^2/2\sigma^2\big).
	\end{align*}
\end{lemma}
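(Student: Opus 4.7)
The plan is to deduce Borell's inequality from Gaussian concentration for Lipschitz functions combined with a finite-dimensional reduction. First, by the implicit separability assumption for the Gaussian process (the standard convention for such statements), we may replace $\sup_{t \in T} \abs{X_t}$ by $\sup_{t \in T_0} \abs{X_t}$ for a countable dense subset $T_0 \subset T$, and by monotone convergence further reduce to the case of a finite index set $T_n = \{t_1, \ldots, t_n\}$---provided the resulting bound is uniform in $n$, which it will be, since the variance proxy $\sigma^2$ is a supremum over all of $T$.

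In the finite-dimensional setting, factor the covariance as $\Sigma = AA^\top$ and represent $(X_{t_1}, \ldots, X_{t_n}) = A\zeta$ with $\zeta \sim N(0, I_n)$. Define $F: \R^n \to \R$ by $F(\zeta) \equiv \max_{1 \le i \le n} \abs{(A\zeta)_i}$. The key observation is that $F$ is $\sigma$-Lipschitz with respect to the Euclidean norm, since
\begin{align*}
\abs{F(\zeta) - F(\zeta')} \le \max_i \abs{(A(\zeta - \zeta'))_i} \le \max_i \pnorm{A_i}{2} \cdot \pnorm{\zeta - \zeta'}{2},
\end{align*}
and $\pnorm{A_i}{2}^2 = \Sigma_{ii} = \mathrm{Var}(X_{t_i}) \le \sigma^2$. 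With this Lipschitz constant identified, we invoke the classical Gaussian Lipschitz concentration inequality: if $F: \R^n \to \R$ is $L$-Lipschitz and $\zeta \sim N(0, I_n)$, then for any $u > 0$,
\begin{align*}
\Prob\big(\abs{F(\zeta) - \E F(\zeta)} > u\big) \le 2\exp\big(-u^2 / (2L^2)\big).
\end{align*}
Applied with $L = \sigma$, this is exactly the stated bound restricted to $T_n$; letting $n \to \infty$ along an increasing exhaustion of $T_0$ concludes the proof via monotone convergence applied to both the supremum and its expectation.

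The main obstacle---and essentially the only nontrivial input---is the Lipschitz concentration statement itself, which is a deep classical fact. Two standard routes are available: (i) Gross's Gaussian logarithmic Sobolev inequality combined with Herbst's argument, which yields the sub-Gaussian moment generating function estimate $\log \E e^{\lambda(F - \E F)} \le L^2 \lambda^2 / 2$, followed by the Chernoff bound and optimization in $\lambda$; or (ii) the Gaussian isoperimetric inequality of Borell and Sudakov--Tsirelson, which gives concentration around the median $m_F$ with constant $1$ in the exponent, upgraded to concentration around $\E F$ using the deterministic bound $\abs{m_F - \E F} \le L\sqrt{\pi/2}$. In the exposition, the Lipschitz concentration inequality would simply be cited rather than reproved. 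The finite-dimensional reduction and the Lipschitz verification are routine; all of the analytic content is absorbed into the Lipschitz concentration inequality, and the constant $2$ in the exponent in our statement corresponds precisely to the constant obtained via either route above.
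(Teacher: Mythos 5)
The paper never proves this lemma: it is quoted as the classical Borell (Borell--Sudakov--Tsirelson) inequality and used as a black box, exactly as Dudley's entropy bound is quoted from \cite{gine2015mathematical}, so there is no internal proof to compare against. Your argument is the standard textbook derivation of that classical fact and is correct: the reduction by separability to a finite index set, the representation $(X_{t_1},\ldots,X_{t_n})=A\zeta$ with the verification that $\zeta\mapsto\max_i\abs{(A\zeta)_i}$ is $\sigma$-Lipschitz because each row of $A$ has Euclidean norm $\Sigma_{ii}^{1/2}\le\sigma$, and the appeal to Gaussian Lipschitz concentration (via log-Sobolev plus Herbst, or via isoperimetry with the median-to-mean adjustment) give precisely the stated two-sided bound with constant $2\exp(-u^2/2\sigma^2)$. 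The only step I would tighten is the passage to the limit: monotone convergence does give $\E\sup_{t\in T_n}\abs{X_t}\uparrow\E\sup_{t\in T_0}\abs{X_t}$ (finite under the implicit assumption that $\sup_t\abs{X_t}<\infty$ almost surely, without which the statement is vacuous), but the events $\{\abs{\sup_{T_n}\abs{X_t}-\E\sup_{T_n}\abs{X_t}}>u\}$ are not monotone in $n$, so to transfer the tail bound you should argue via almost-sure (hence distributional) convergence of the centered suprema and the portmanteau inequality for the open set $(u,\infty)$, using that the finite-$n$ bounds hold uniformly since $\sigma_n\le\sigma$. This is a routine repair and does not affect the validity of your proof.
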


\begin{proof}[Proof of Lemma \ref{lem:brownian_motion_uniform}]
	Without loss of generality we assume $s=d$. Let $G$ be the Gaussian process defined on $\R_{\geq 0}^d$ through $G(h)\equiv \G(h,\bm{1})$, whose natural induced metric on $\R_{\geq 0}^d$ is given by $
	d_G^2(h_1,h_2)\equiv \E(G(h_1)-G(h_2))^2= \lambda_{d_\ast}\big([-\bm{1}_{[\kappa_\ast:d]},h_1 \bm{1}_{[\kappa_\ast:d]}]\Delta [-\bm{1}_{[\kappa_\ast:d]},h_2\bm{1}_{[\kappa_\ast:d]}]\big)$, where $\lambda_{d_\ast}$ is the Lebesgue measure on $\R^{d_\ast}$. Then for $u\geq 0$,
	\begin{align}\label{ineq:finte_expectation_W_2}
	%\E W\leq \E \bigg[\sup_{t\geq 0} \frac{\abs{\mathbb{B}(t)}}{t}\bigg].
	&\Prob\bigg(\max_{h_1>0}\frac{ \abs{\G(h_1,\bm{1})}}{\prod_{k=\kappa_\ast}^d \big((h_1)_k+1\big)}>u\bigg)\\
	&\leq \sum_{\ell_k\geq 0,\kappa_\ast\leq k\leq d} \Prob \bigg(\sup_{ \substack{2^{\ell_k}-1\leq h_k\leq 2^{\ell_k+1}-1,\\ \kappa_\ast \leq k\leq d}} \frac{\abs{G(h)}}{ \prod_{k=\kappa_\ast}^d \big(h_k+1\big) }>u\bigg)\nonumber\\
	&\leq \sum_{\ell_k\geq 0,\kappa_\ast\leq k\leq d} \Prob\bigg(\sup_{ \substack{2^{\ell_k}-1\leq h_k\leq 2^{\ell_k+1}-1,\\ \kappa_\ast \leq k\leq d}} \abs{G(h)}> \prod_{k=\kappa_\ast}^d 2^{\ell_k} \cdot u\bigg)\nonumber\\
	&\leq \sum_{\ell_k \geq 1, \kappa_\ast\leq k \leq d} \Prob\bigg(\sup_{ \substack{0\leq h_k\leq 2^{\ell_k},\\ \kappa_\ast \leq k\leq d}} \abs{G(h)}> 2^{-d_\ast}\prod_{k=\kappa_\ast}^d 2^{\ell_k} \cdot u\bigg).\nonumber
	\end{align}
	Since for $h_i$ such that $0\leq (h_i)_k\leq 2^{\ell_k}$, $i=1,2$, Lemma \ref{lem:product_difference} entails that $
	d_G^2(h_1,h_2)\leq c_{d_\ast} \prod_{k=\kappa_\ast}^d 2^{\ell_k} \cdot \pnorm{h_1-h_2}{}$, it follows by Dudley's entropy integral (cf. Lemma \ref{lem:dudley_entropy_integral}) that
	\begin{align*}
	&\E \sup_{ \substack{0\leq h_k\leq 2^{\ell_k},\\ \kappa_\ast \leq k\leq d}} \abs{G(h)}\\
	&\leq C \int_0^{C(\prod_{k=\kappa_\ast}^d 2^{\ell_k})^{1/2} }\sqrt{\log \mathcal{N}\bigg(\epsilon, \prod_{k=\kappa_\ast}^d [0,2^{\ell_k}], d_G\bigg)}\ \d{\epsilon}\\
	&\leq C \int_0^{C(\prod_{k=\kappa_\ast}^d 2^{\ell_k})^{1/2} }\sqrt{\log \mathcal{N}\bigg(\frac{\epsilon^2}{c_{d_\ast} \prod_{k=\kappa_\ast}^d 2^{\ell_k} }, \prod_{k=\kappa_\ast}^d [0,2^{\ell_k}], \pnorm{\cdot}{}\bigg)}\ \d{\epsilon}\\
	&\leq C \int_0^{C (\prod_{k=\kappa_\ast}^d 2^{\ell_k})^{1/2} }\sqrt{\log \bigg(\frac{\prod_{k=\kappa_\ast}^d 2^{\ell_k}}{ \big(\epsilon^2/ c_{d_\ast}\prod_{k=\kappa_\ast}^d 2^{\ell_k} \big)^{d_\ast}}\bigg)}\ \d{\epsilon}
	\leq C_{d_\ast} L\bigg(\prod_{k=\kappa_\ast}^d 2^{\ell_k}\bigg),
	\end{align*}
	where $L(x) = \sqrt{x \log x}$. This implies that for $u$ larger than a constant only depending on $d_\ast$,
	\begin{align}\label{ineq:finte_expectation_W_3}
	& \Prob\bigg(\sup_{ \substack{0\leq h_k\leq 2^{\ell_k},\\\kappa_\ast \leq k\leq d} } \abs{G(h)}> 2^{-d_\ast}\prod_{k=\kappa_\ast}^d 2^{\ell_k} \cdot u\bigg)\\
	&\leq \Prob\bigg(\sup_{ \substack{0\leq h_k\leq 2^{\ell_k},\\\kappa_\ast \leq k\leq d}} \abs{G(h)} -\E\sup_{ \substack{0\leq h_k\leq 2^{\ell_k},\\\kappa_\ast \leq k\leq d}} \abs{G(h)}> 2^{-d_\ast-1}\prod_{k=\kappa_\ast}^d 2^{\ell_k} \cdot u\bigg)\nonumber\\
	&\leq 2 e^{-c_{d_\ast}'\cdot \prod_{k=\kappa_\ast}^d 2^{\ell_k} u^2},\nonumber
	\end{align}
	where in the last line we used Gaussian concentration inequality (cf. Lemma \ref{lem:Gaussian_concentration}) and the fact that $\sup_{0\leq h_k\leq 2^{\ell_k},\kappa_\ast \leq k\leq d}\mathrm{Var}(G(h))\leq 2^{d_\ast} \prod_{k=\kappa_\ast}^d 2^{\ell_k}$. 
	Combining (\ref{ineq:finte_expectation_W_2}) and (\ref{ineq:finte_expectation_W_3}) we see that for $u>0$ large enough,
	\begin{align*}
	%\E W\leq \E \bigg[\sup_{t\geq 0} \frac{\abs{\mathbb{B}(t)}}{t}\bigg].
	&\Prob\bigg(\max_{h_1>0}\frac{ \abs{\G(h_1,\bm{1})} }{\prod_{k=\kappa_\ast}^d \big((h_1)_k+1\big)}>u\bigg)\leq 2\sum_{\ell_k \geq 1, \kappa_\ast\leq k \leq d} e^{-c_{d_\ast}'\cdot \prod_{k=\kappa_\ast}^d 2^{\ell_k} u^2}\leq C_{d_\ast}' e^{-u^2/C_{d_\ast}'}. \nonumber
	\end{align*}
	This completes the proof.
\end{proof}

\begin{proof}[Proof of Lemma \ref{lem:finite_expectation_W}]
	For simplicity of notation we assume that $\sigma=1$. Then
	\begin{align}\label{ineq:finte_expectation_W_1}
	W & \leq \max_{h_1>0} \mathbb{U}(h_1,\bm{1})\leq \max_{h_1>0}\frac{ \abs{\G(h_1,\bm{1})}}{\prod_{k=\kappa_\ast}^d \big((h_1)_k+1\big)}+ \mathcal{O}(1).
	\end{align}
	Now by Lemma \ref{lem:brownian_motion_uniform} and (\ref{ineq:finte_expectation_W_1}), we see that $\pnorm{(W)_+}{\psi_2}<\infty$. Similarly we can establish $\pnorm{(W)_-}{\psi_2}<\infty$.
\end{proof}

\section{Auxiliary lemmas}\label{section:proof_auxiliary}

\begin{lemma}\label{lem:taylor_expansion}
	Suppose Assumption \ref{assump:smoothness} holds. For $\omega_n \searrow 0$, let $r_n \equiv (\omega_n^{1/\alpha_1},\ldots,\omega_n^{1/\alpha_d})$. 
%	Let $h_1,h_2\geq 0$ be such that $\max_{i=1,2}\max_{1\leq k\leq d} (h_i)_k (r_n)_k \leq \epsilon_0$.
	\begin{itemize}
		\item(Fixed design) For $h_1,h_2\geq 0$ such that $\max\limits_{i=1,2}\max\limits_{1\leq k\leq d} (h_i)_k (r_n)_k \leq \epsilon_0$,
			\begin{align*}
		&\bar{f_0}|_{[x_0-h_1 r_n,x_0+h_2 r_n]}-f_0(x_0) \\
		& = \mathfrak{o}(\tilde{\omega}_n)+ \sum_{ \bm{j} \in J_\ast} \frac{ \partial^{\bm{j}} f_0(x_0)}{\bm{j}!} \prod_{k=1}^s  n^{-\beta_k j_k} A_{j_k}\big((h_1r_n)_k n^{\beta_k},(h_2r_n)_k n^{\beta_k} \big) \\
		& = \mathcal{O}(\tilde{\omega}_n).
		\end{align*}
		Here $A_q(t_1,t_2)\equiv \sum_{k=-\floor{t_1}}^{\floor{t_2}} k^q/(\floor{t_1}+\floor{t_2}+1)$ for $t_1,t_2\geq 0$, and $\tilde{\omega}_n \equiv \omega_n\cdot \max\limits_{1\leq k\leq s}\big(1\vee (h_1)_k^{\alpha_k}\vee (h_2)_k^{\alpha_k}\big)\equiv \omega_n\cdot \big(1\vee (h_1)_{k_0}^{\alpha_{k_0}}\vee (h_2)_{k_0}^{\alpha_{k_0}}\big)$. Moreover, for $c>0$ large enough, if $ (h_2)_{k_0} \geq \max\{c, c \max_{1\leq k\leq s} \big(1\vee (h_1)_k), n^{-\beta_{k_0}} (r_n)_{k_0}^{-1}\}$, then
		\begin{align*}
		\bar{f_0}|_{[x_0-h_1 r_n,x_0+h_2 r_n]}-f_0(x_0)\geq \tilde{\omega}_n(C_{\bm{\alpha}}-\mathcal{O}(c^{-1})).
		\end{align*}
		\item(Random design) Suppose that $\omega_n^{1+\sum_k \alpha_k^{-1}}\gg \log n/n$, and that $P$ is the uniform distribution on $[0,1]^d$. Then for any $c_0\geq 1$, uniformly for $h_1,h_2\geq 0$ such that $\max\limits_{i=1,2}\max\limits_{1\leq k\leq d} (h_i)_k (r_n)_k \leq \epsilon_0$ and that $c_0^{-1}\bm{1}\leq h_1\leq c_0\bm{1}$, it holds with probability at least $1-\mathcal{O}(n^{-2})$ that,
		\begin{align*}
		&\bar{f_0}|_{[x_0-h_1 r_n,x_0+h_2 r_n]}-f_0(x_0) \\
		& = \mathfrak{o}(\tilde{\omega}_n)+ \sum_{ \bm{j} \in J_\ast} \frac{ \partial^{\bm{j}} f_0(x_0)}{\bm{j}!} \prod_{k=1}^s  n^{-\beta_k j_k} I_{j_k}\big((h_1r_n)_k n^{\beta_k},(h_2r_n)_k n^{\beta_k} \big) \\
		&= \mathcal{O}(\tilde{\omega}_n).
		\end{align*}
		Here $I_q(t_1,t_2)\equiv \int_{-t_1}^{t_2} x^q\ \d{x}/(t_1+t_2)$ for $t_1,t_2\geq 0$, and $\tilde{\omega}_n \equiv \omega_n\cdot \max_{1\leq k\leq s} \big(1\vee (h_2)_k^{\alpha_k}\big)\equiv \omega_n\cdot \big(1\vee (h_2)_{k_0}^{\alpha_{k_0}} \big)$. Moreover, for $c>0$ large enough (possibly depending on $c_0$), if $ (h_2)_{k_0} \geq c$, then
		\begin{align*}
		\bar{f_0}|_{[x_0-h_1 r_n,x_0+h_2 r_n]}-f_0(x_0)\geq \tilde{\omega}_n(C_{\bm{\alpha}}-\mathcal{O}(c^{-1})).
		\end{align*}
	\end{itemize}
	In both cases, $C_{\bm{\alpha}}>0$ is a constant only depending on $\bm{\alpha}$.
\end{lemma}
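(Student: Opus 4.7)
The plan is to substitute the Taylor expansion of Assumption \ref{assump:smoothness} directly into the averaged quantity and reduce it to a finite sum indexed by $J_\ast$ (by Lemma \ref{lem:mixed_derivative_vanish}(2), all other indices in $J\setminus J_\ast$ give vanishing derivatives at $x_0$). To ensure the expansion holds uniformly over the possibly large rectangle, set $M_k\equiv 1\vee(h_1)_k\vee(h_2)_k$ and the rescaled radius $(\tilde{r}_n)_k\equiv\tilde{\omega}_n^{1/\alpha_k}$; the elementary identity $(\max_\ell M_\ell^{\alpha_\ell})^{1/\alpha_k}\geq M_k$ yields $[x_0-h_1 r_n,x_0+h_2 r_n]\subset[x_0-\tilde{r}_n,x_0+\tilde{r}_n]$, so Assumption \ref{assump:smoothness} applied at the scale $\tilde{\omega}_n$ with $L_0=1$ gives $f_0(x)-f_0(x_0)=\sum_{\bm{j}\in J_\ast}\partial^{\bm{j}}f_0(x_0)(x-x_0)^{\bm{j}}/\bm{j}!+\mathfrak{o}(\tilde{\omega}_n)$ uniformly on the enlarged rectangle (if $\tilde{\omega}_n$ does not vanish along the sequence, the weaker $\mathcal{O}(\tilde{\omega}_n)$ remainder, coming from local $C^{\max\alpha_k}$-smoothness of $f_0$ at $x_0$, suffices for the magnitude bound).

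In the fixed lattice design, averaging $(x-x_0)^{\bm{j}}$ over lattice points factors coordinate-wise, and each factor is exactly $n^{-\beta_k j_k}A_{j_k}((h_1 r_n)_k n^{\beta_k},(h_2 r_n)_k n^{\beta_k})$, yielding the first identity after absorbing the preserved $\mathfrak{o}(\tilde{\omega}_n)$ remainder. The uniform $\mathcal{O}(\tilde{\omega}_n)$ bound then follows from $\abs{A_{j_k}(t_1,t_2)}\lesssim\max(t_1,t_2)^{j_k}$ and the weighted geometric mean inequality $\prod_k M_k^{j_k}=\prod_k(M_k^{\alpha_k})^{j_k/\alpha_k}\leq\max_k M_k^{\alpha_k}$, valid since $\sum_{k=1}^s j_k/\alpha_k=1$ on $J_\ast$. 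The random design version follows by the same argument after replacing the lattice average with the population integral: a Bernstein concentration plus a dyadic union bound over the VC class of rectangles (as in Lemma 2 of \cite{deng2018isotonic}) gives $\abs{\mathbb{P}_n(\mathrm{rect})/P(\mathrm{rect})-1}=\mathfrak{o}(1)$ uniformly with probability $1-\mathcal{O}(n^{-2})$, which requires $nP(\mathrm{rect})\gg\log n$ and is ensured by $\omega_n^{1+\sum_k\alpha_k^{-1}}\gg\log n/n$ together with the lower bound $h_1\geq c_0^{-1}\bm{1}$; the resulting integrals $\int(x-x_0)^{\bm{j}}\,\d{x}/\mathrm{vol}$ factor into the $I_{j_k}$ quantities.

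For the lower bound, pick $k_0$ attaining the max defining $\tilde{\omega}_n$. By Lemma \ref{lem:mixed_derivative_vanish}(1), $\alpha_{k_0}$ is odd and $\partial_{k_0}^{\alpha_{k_0}}f_0(x_0)>0$. The pure-$k_0$ term contributes $(\partial_{k_0}^{\alpha_{k_0}}f_0(x_0)/(\alpha_{k_0}+1)!)\,(h_2)_{k_0}^{\alpha_{k_0}}\omega_n\,(1-\mathcal{O}(c^{-1}))=c_1\tilde{\omega}_n(1-\mathcal{O}(c^{-1}))$, the $\mathcal{O}(c^{-1})$ correction coming from the asymmetric perturbation by $(h_1)_{k_0}\leq(h_2)_{k_0}/c$; the condition $(h_2)_{k_0}\geq n^{-\beta_{k_0}}(r_n)_{k_0}^{-1}$ ensures the Riemann sum $A_{\alpha_{k_0}}$ is close to its integral. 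The other pure terms $\bm{j}=\alpha_k\bm{e}_k$ ($k\neq k_0$) are non-negative by the same odd-power-plus-positivity analysis. For the mixed terms $\bm{j}\in J_\ast$ with $\pnorm{\bm{j}}{0}\geq 2$, expand $(a^{j_k+1}-(-b)^{j_k+1})/(a+b)=\sum_{i=0}^{j_k}b^{j_k-i}(-a)^i$ in each coordinate: contributions carrying any positive power of $(h_1)_k$ are $\mathcal{O}(c^{-1}\tilde{\omega}_n)$ by $(h_1)_k\leq(h_2)_{k_0}/c$ and the geometric-mean bound, while the pure-$h_2$ residuals combine with the pure-$\bm{j}$ terms to reconstruct the integral average $\int_{[x_0,x_0+h_2 r_n]}(f_0(x)-f_0(x_0))\,\d{x}/\mathrm{vol}$, which is non-negative by the monotonicity of $f_0$ on this upper sub-rectangle.

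The main obstacle is controlling the mixed-term sum from below. Since $\partial^{\bm{j}}f_0(x_0)$ for mixed $\bm{j}\in J_\ast$ is sign-unconstrained by the isotonicity of $f_0$ and each mixed term is itself of order $\tilde{\omega}_n$, one cannot bound them term-by-term against the positive pure-$k_0$ contribution. The route around this is to invoke the pointwise inequality $f_0(x_0+t)-f_0(x_0)\geq(\partial_{k_0}^{\alpha_{k_0}}f_0(x_0)/\alpha_{k_0}!)\,t_{k_0}^{\alpha_{k_0}}(1-\mathfrak{o}(1))$, valid for $t\geq\bm{0}$ coordinatewise by combining one-dimensional Taylor expansion in $x_{k_0}$ with monotonicity of $f_0$ in the remaining coordinates; integrating this over the upper piece $[x_0,x_0+h_2 r_n]\cap\{x_{k_0}\geq x_{0,k_0}+(h_2)_{k_0}(r_n)_{k_0}/2\}$ produces a lower bound of order $\tilde{\omega}_n$ that persists after dilution by the remainder $[x_0-h_1 r_n,x_0+h_2 r_n]\setminus[x_0,x_0+h_2 r_n]$, whose contribution is bounded by $\mathcal{O}(c^{-1}\tilde{\omega}_n)$ using the uniform $\mathcal{O}(\tilde{\omega}_n)$ control of $\abs{f_0-f_0(x_0)}$ and the smallness of $h_1$ relative to $(h_2)_{k_0}$. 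This is precisely where one leverages the global isotonicity of $f_0$ through an integrated identity rather than a term-wise sign analysis, producing the positive floor $C_{\bm{\alpha}}$.
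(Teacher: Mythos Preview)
Your treatment of the Taylor expansion identity and the $\mathcal{O}(\tilde{\omega}_n)$ upper bound is correct and essentially matches the paper: rescaling to $\tilde{r}_n$ so that Assumption~\ref{assump:smoothness} applies at scale $\tilde{\omega}_n$, factoring the lattice (or population) average coordinatewise into $A_{j_k}$ (or $I_{j_k}$), and controlling the magnitude via the weighted geometric mean $\prod_k M_k^{j_k}\le\max_k M_k^{\alpha_k}$ on $J_\ast$. The random-design replacement of $\mathbb{P}_n$ by $P$ is handled in the paper by a dedicated ratio lemma (Lemma~\ref{lem:ratio_random_replace}), but your Bernstein-plus-peeling sketch is in the same spirit.

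The lower bound, however, has a genuine gap. Your two routes both break on the configuration where $(h_2)_k$ is small (possibly zero) for some $k\neq k_0$ while $(h_1)_k$ is not. For the expansion route, the claim that every term carrying a positive power of $(h_1)_k$ is $\mathcal{O}(c^{-1}\tilde{\omega}_n)$ is false: take $\bm{\alpha}=(3,9)$, $\bm{j}=(2,3)\in J_\ast$, $k_0=1$, $(h_2)_1\asymp c^{3/2}$, $(h_2)_2\asymp c^{1/2}$, $(h_1)_2=(h_2)_1/c\asymp c^{1/2}$; then the cross piece $\omega_n(h_2)_1^{2}(h_2)_2^{2}(h_1)_2$ is of exact order $\tilde{\omega}_n$ with no $c^{-1}$ gain, so it can swamp the positive pure-$h_2$ floor. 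For the dilution route, the relative volume of the upper block is $\prod_k (h_2)_k/((h_1)_k+(h_2)_k)$, which vanishes whenever $(h_2)_k=0$ for some $k\neq k_0$; the remainder then carries full weight and contributes $\mathcal{O}(\tilde{\omega}_n)$, not $\mathcal{O}(c^{-1}\tilde{\omega}_n)$, since the hypothesis only gives $(h_1)_k\le (h_2)_{k_0}/c$, not $(h_1)_k\le (h_2)_k/c$.

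The paper's fix is a one-line monotonicity reduction you do not use: because $b\mapsto \bar{f_0}|_{[a,b]}$ is coordinatewise nondecreasing (average of a nondecreasing slice function over a growing upper interval), one may set $(h_2)_k=0$ for all $k\neq k_0$ at the outset and prove the lower bound for that degenerate configuration. After the reduction the only surviving ``pure-$h_2$'' term is $\bm{j}=\alpha_{k_0}e_{k_0}$, whose $A_{\alpha_{k_0}}$ factor is $\gtrsim (h_2 r_n)_{k_0}^{\alpha_{k_0}}$ (here is where the lattice condition $(h_2)_{k_0}\ge n^{-\beta_{k_0}}(r_n)_{k_0}^{-1}$ enters). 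Every other $\bm{j}\in J_\ast$ has $j_{k_0}<\alpha_{k_0}$ and carries only $(h_1)_k$-factors in coordinates $k\neq k_0$; since $k_0$ realizes the maximum one has $(h_1)_k^{\alpha_k}\le (h_2)_{k_0}^{\alpha_{k_0}}$, and combining this with $(h_1)_k\le (h_2)_{k_0}/c$ bounds those terms by $\tilde{\omega}_n(h_2)_{k_0}^{j_{k_0}-\alpha_{k_0}}\le \tilde{\omega}_n c^{-1}$. (In the applications $h_1=\bm{1}_{[\kappa_\ast:d]}$, so $\prod_{k\neq k_0}(h_1)_k^{j_k}=1$ and this step is immediate.) Your pure-$h_2$ reconstruction of the upper-block average is a nice observation, but it does not substitute for this reduction.
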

\begin{proof}
	First consider fixed lattice design case. By Assumption \ref{assump:smoothness},
	\begin{align*}
	&\bar{f_0}|_{[x_0-h_1 r_n,x_0+h_2 r_n]}-f_0(x_0) \\
	&=   \mathfrak{o}(\tilde{\omega}_n)+\bigg(\prod_{k=1}^d \left( \floor{(h_1r_n)_k n^{\beta_k}}+\floor{(h_2r_n)_k n^{\beta_k}}+1\right)\bigg)^{-1}\\
	& \qquad\qquad\qquad\qquad \times  \sum_{ \bm{j} \in J_\ast} \frac{ \partial^{\bm{j}} f_0(x_0)}{\bm{j}!} \sum_{X_i \in [x_0-h_1 r_n, x_0+h_2 r_n]} (X_i-x_0)^{\bm{j}}\\
	& =  \mathfrak{o}(\tilde{\omega}_n)+\bigg(\prod_{k=1}^d \left( \floor{(h_1r_n)_k n^{\beta_k}}+\floor{(h_2r_n)_k n^{\beta_k}}+1\right)\bigg)^{-1}\\
	& \qquad\qquad\qquad\qquad \times  \sum_{ \bm{j} \in J_\ast} \frac{ \partial^{\bm{j}} f_0(x_0)}{\bm{j}!} \prod_{k=1}^s \sum_{-(h_1r_n)_k  n^{\beta_k}\leq m\leq (h_2r_n)_k n^{\beta_k}} \bigg(\frac{m}{n^{\beta_k}}\bigg)^{j_k}\\
	& = \mathfrak{o}(\tilde{\omega}_n)+ \sum_{ \bm{j} \in J_\ast} \frac{ \partial^{\bm{j}} f_0(x_0)}{\bm{j}!} \prod_{k=1}^s  n^{-\beta_k j_k} A_{j_k}\big((h_1r_n)_k n^{\beta_k},(h_2r_n)_k n^{\beta_k} \big).
	\end{align*}
	The whole term above is of order $\mathcal{O}(\tilde{\omega}_n)$ since
	\begin{align*}
	&\prod_{k=1}^s  n^{-\beta_k j_k} A_{j_k}\big((h_1r_n)_k n^{\beta_k},(h_2r_n)_k n^{\beta_k} \big)\\
	&\lesssim \prod_{k=1}^s  n^{-\beta_k j_k} \big(((h_1\vee h_2)r_n)_k n^{\beta_k} \big)^{j_k} = \omega_n (h_1\vee h_2)^{\bm{j}} \lesssim \tilde{\omega}_n,
	\end{align*}
	where the last inequality follows from the fact that for any $\bm{j} \in J_\ast$, and $h\geq 0$, $h^{\bm{j}}\leq \sum_k (j_k/\alpha_k) h_k^{\alpha_k}$. For the inequality in the statement of the lemma, by monotonicity of $\bar{f_0}|_{[x_0-h_1 r_n,x_0+h_2 r_n]}-f_0(x_0) $, we may assume $(h_2)_k = 0$ for $k\neq k_0$. Then for any $\bm{j} \in J_\ast$, if $(h_2)_{k_0}>c>1$,
	\begin{align*}
	\prod_{k=1}^s  n^{-\beta_k j_k} \big(((h_1\vee h_2)r_n)_k n^{\beta_k} \big)^{j_k}  &= \omega_n \prod_{k=1}^s (h_1\vee h_2)_k^{j_k}\\
	& \leq \tilde{\omega}_n\bigg[\prod_{k=1}^s \bigg\{1 \vee \frac{(h_2)_k^{j_k}}{\max\limits_{1\leq k\leq s}(1\vee (h_1)_k^{\alpha_k}\vee (h_2)_{k}^{\alpha_{k}})}\bigg\} \bigg] \\
	&\lesssim \tilde{\omega}_n (h_2)_{k_0}^{j_{k_0}-\alpha_{k_0}}\lesssim \tilde{\omega}_n \cdot c^{j_{k_0}-\alpha_{k_0}}.
	\end{align*}
	For $j_{k_0} = \alpha_{k_0}$, if $(h_2 r_n)_{k_0} n^{\beta_{k_0}}\geq 1, (h_2)_{k_0}> c\max_{1\leq k\leq s} (h_1)_k$ for a large enough constant $c>0$, using $A_q(t_1,t_2)\geq \int_0^{t_2} x^q\ \d{x}/(t_1+t_2+1)-(t_1+1)^{q}$ for $t_2\geq 1, t_1\geq 0$ and odd positive integer $q$, we have
	\begin{align*}
	&n^{-\beta_{k_0}\alpha_{k_0}} A_{\alpha_{k_0}}\big((h_1r_n)_{k_0} n^{\beta_{k_0}},(h_2r_n)_{k_0} n^{\beta_{k_0}} \big)\\
	&\geq n^{-\beta_{k_0}\alpha_{k_0}}\left(C_{\alpha_{k_0}}\big((h_2r_n)_{k_0} n^{\beta_{k_0}}\big)^{\alpha_{k_0}}- \big(1+(h_1r_n)_{k_0} n^{\beta_{k_0}}\big)^{\alpha_{k_0}}\right)\\
	&\gtrsim \omega_n h_2^{\alpha_{k_0}} \asymp \tilde{\omega}_n.
	\end{align*}
	Collecting the bounds we have proved the inequality.
	
	Next for random design case, using Lemma \ref{lem:ratio_random_replace}, with the desired probability we have
	\begin{align*}
	&\bar{f_0}|_{[x_0-h_1 r_n,x_0+h_2 r_n]}-f_0(x_0) \\
	&=   \mathfrak{o}(\tilde{\omega}_n)+ \sum_{\bm{j} \in J_\ast} \frac{\partial^{\bm{j}} f_0(x_0) }{\bm{j}!} \cdot \frac{\Prob_n (X-x_0)^{\bm{j}} \bm{1}_{[x_0-h_1 r_n,x_0+h_2 r_n]}}{\Prob_n\bm{1}_{[x_0-h_1 r_n,x_0+h_2 r_n]}}\\
	& = \mathfrak{o}(\tilde{\omega}_n)+ \sum_{\bm{j} \in J_\ast} \frac{\partial^{\bm{j}} f_0(x_0) }{\bm{j}!} \cdot \frac{P (X-x_0)^{\bm{j}} \bm{1}_{[x_0-h_1 r_n,x_0+h_2 r_n]}}{P\bm{1}_{[x_0-h_1 r_n,x_0+h_2 r_n]}}\\
	& = \mathfrak{o}(\tilde{\omega}_n)+ \sum_{ \bm{j} \in J_\ast} \frac{ \partial^{\bm{j}} f_0(x_0)}{\bm{j}!} \prod_{k=1}^s  n^{-\beta_k j_k} I_{j_k}\big((h_1r_n)_k n^{\beta_k},(h_2r_n)_k n^{\beta_k} \big).
	\end{align*}
	The inequality in the random design case can be proved similarly, but without the constraint that $(h_2 r_n)_{k_0} n^{\beta_{k_0}}\geq 1$ since $I_q(t_1,t_2)\geq \int_0^{t_2} x^q\ \d{x}/(t_1+t_2)-t_1^q$ holds for all $t_1,t_2\geq 0$ and odd positive integer $q$.
\end{proof}

\begin{lemma}\label{lem:ratio_indicator}
	Let $P$ be a probability distribution on $[0,1]^d$ with Lebesgue density bounded away from $0$ and $\infty$, and let $g:[0,1]^d \to \R_{\geq 0}$ be a measurable function that is uniformly bounded by $1$. Then for any $\Gamma \subset [0,1]^d$, $L_n\geq 1$ and $n$ large enough, it holds with probability at least $1-n^{-2}$ that 
	\begin{align*}
	\sup_{h_1,h_2 \in \Gamma: P g\bm{1}_{[h_1,h_2]}\geq  L_n \log n/n}\biggabs{\frac{\Prob_n g\bm{1}_{[h_1,h_2]}}{Pg\bm{1}_{ [h_1,h_2]} }-1  }\leq CL_n^{-1/2}.
	\end{align*}
	Here $C>0$ is an absolute constant, and $\Prob_n$ is the empirical measure.
\end{lemma}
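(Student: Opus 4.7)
\medskip

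\noindent\textbf{Proof proposal.} The plan is to run a standard peeling (dyadic shell) argument combined with Talagrand's/Bousquet's concentration inequality for empirical processes, applied to the class of rectangle indicators, which is VC.

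First I would observe that the class $\mathcal{C}=\{\bm{1}_{[h_1,h_2]}:h_1,h_2\in[0,1]^d\}$ is a VC class with VC dimension $\lesssim d$, and the multiplication by the fixed nonnegative function $g$ (with $\|g\|_\infty\leq 1$) does not destroy the VC-type uniform entropy estimates, because for any pair of rectangles $[h_1,h_2],[h_1',h_2']$ one has the pointwise bound $|g\bm{1}_{[h_1,h_2]}-g\bm{1}_{[h_1',h_2']}|\leq \bm{1}_{[h_1,h_2]\Delta[h_1',h_2']}$. Hence the standard empirical-process maximal inequality for VC classes applies: for $\mathcal{F}_\sigma\equiv\{g\bm{1}_{[h_1,h_2]}:h_1,h_2\in\Gamma,\ Pg\bm{1}_{[h_1,h_2]}\leq\sigma^2\}$ with envelope $U\leq 1$,
\begin{equation*}
\E\sup_{f\in\mathcal{F}_\sigma}|(\Prob_n-P)f|\lesssim_d \sqrt{\frac{\sigma^2\log(1/\sigma)}{n}}+\frac{\log(1/\sigma)}{n}.
\end{equation*}

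Next I would peel the range $[L_n\log n/n,\,1]$ of the functional $Pg\bm{1}_{[h_1,h_2]}$ into $K\lesssim \log n$ dyadic shells $S_k=\{(h_1,h_2):2^kL_n\log n/n\leq Pg\bm{1}_{[h_1,h_2]}<2^{k+1}L_n\log n/n\}$ for $k=0,1,\ldots,K$. On each shell, apply Talagrand's inequality (Bousquet's form) to the class $\mathcal{F}_{\sigma_k}$ with $\sigma_k^2\asymp 2^kL_n\log n/n$ and deviation parameter $t=C\log n$ for sufficiently large $C=C(d)$: with probability $\geq 1-n^{-C}$,
\begin{equation*}
\sup_{(h_1,h_2)\in S_k}|(\Prob_n-P)g\bm{1}_{[h_1,h_2]}|\lesssim_d \sqrt{\frac{2^kL_n(\log n)^2}{n^2}}+\frac{\log n}{n}.
\end{equation*}
Dividing by $Pg\bm{1}_{[h_1,h_2]}\geq 2^kL_n\log n/n$ on $S_k$ yields ratio $\lesssim_d (2^kL_n)^{-1/2}+(2^kL_n)^{-1}\lesssim L_n^{-1/2}$ uniformly in $k$ (using $L_n\geq 1$). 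A union bound over the $K\lesssim \log n$ shells keeps the failure probability below $n^{-2}$ once $C=C(d)$ is taken large enough; this yields the stated ratio bound.

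The main technical point to be careful about is the choice of VC class and the application of the maximal inequality with the correct dependence on $\sigma$ and on $\log(1/\sigma)$: one needs the localized empirical-process bound (of Giné--Koltchinskii / Koltchinskii--Pollard type) rather than just a global VC-covering estimate, so that the $\sqrt{\sigma^2\log(1/\sigma)/n}$ factor is correctly obtained. Aside from that, the argument is a routine peeling-plus-Talagrand calculation, and the density being bounded away from $0$ and $\infty$ enters only to guarantee that the shells $S_k$ are nonempty for the relevant range of $k$ and that $(P_n-P)$ concentrates around $P$ in the usual way.
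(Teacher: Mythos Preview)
Your proposal is correct and follows essentially the same route as the paper: peel the range of $Pg\bm{1}_{[h_1,h_2]}$ into dyadic shells, bound the expected supremum on each shell via the local maximal inequality for VC-type classes (the paper cites this as Lemma~\ref{lem:local_maximal_ineq}), apply Talagrand/Bousquet with $t\asymp\log n$, and take a union bound over the $O(\log n)$ shells. The only cosmetic difference is that the paper peels on $\sqrt{Pg\bm{1}_{[h_1,h_2]}}$ rather than on $Pg\bm{1}_{[h_1,h_2]}$, which is the same thing up to reindexing. Your remark about where the density bound enters is slightly off: in fact the paper's proof of this lemma does not use the density assumption at all---it is only needed downstream when the lemma is applied.
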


We need Talagrand's concentration inequality \cite{talagrand1996new} for the empirical process in the form given by Bousquet \cite{bousquet2003concentration}, recorded as follows.

\begin{lemma}[Theorem 3.3.9 of \cite{gine2015mathematical}]\label{lem:talagrand_conc_ineq}
	Let $\mathcal{F}$ be a countable class of real-valued measurable functions such that $\sup_{f \in \mathcal{F}} \pnorm{f}{\infty}\leq b$. Then
	\begin{align*}
	\Prob\bigg(\sup_{f \in \mathcal{F}}\abs{\nu_n f} \geq \E\sup_{f \in \mathcal{F}}\abs{\nu_n f} +\sqrt{2\bar{\sigma}^2 x}+b x/3\sqrt{n} \bigg)\leq e^{-x},
	\end{align*}
	where $\bar{\sigma}^2\equiv \sigma^2+2b n^{-1/2} \E \sup_{f \in \mathcal{F}} \abs{\nu_n f}$ with $\sigma^2\equiv \sup_{f \in \mathcal{F}} \mathrm{Var}_P f$, and $\nu_n\equiv \sqrt{n}(\Prob_n-P)$.
\end{lemma}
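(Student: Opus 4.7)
The plan is to establish this as the standard Bousquet-refined Talagrand inequality via the entropy (Herbst) method, since sharp constants such as the one appearing in $\bar{\sigma}^2$ are out of reach for simpler approaches like bounded differences or Efron--Stein. First I would reduce to a one-sided supremum: since the result concerns $|\nu_n f|$, replace $\mathcal{F}$ by $\mathcal{F} \cup (-\mathcal{F})$ (still countable, still uniformly bounded by $b$, same $\sigma^2$), so it suffices to bound the upper tail of $Z \equiv \sup_{f \in \mathcal{F}} \nu_n f = \sup_{f \in \mathcal{F}} n^{-1/2}\sum_{i=1}^n(f(X_i)-Pf)$. Writing $Z$ as a function $Z = F(X_1,\ldots,X_n)$ of the i.i.d. sample and letting $X_1',\ldots,X_n'$ be independent copies, I will denote by $Z^{(i)}$ the version of $Z$ obtained by replacing $X_i$ with $X_i'$.

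The core step is a modified log-Sobolev inequality adapted to suprema of empirical processes: for every $\lambda>0$,
\begin{align*}
\lambda\,\mathbb{E}[Z e^{\lambda Z}] - \mathbb{E} e^{\lambda Z}\log \mathbb{E} e^{\lambda Z} \leq \sum_{i=1}^n \mathbb{E}\bigl[e^{\lambda Z}\,\phi\bigl(-\lambda(Z-Z^{(i)})\bigr)\bigr],
\end{align*}
with $\phi(u)=e^u-u-1$. This follows from tensorization of entropy together with the variational formula $\mathrm{Ent}(e^{\lambda Z})\leq \mathbb{E}[\phi(-\lambda(Z-Z'))\,e^{\lambda Z}]$ applied coordinatewise. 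I then need to control the right-hand side; this is where Bousquet's refinement enters. Picking the (measurable) maximizer $f^*$ of $\sum_i (f(X_i)-Pf)$, the inequality $(Z - Z^{(i)})_+ \leq n^{-1/2}(f^*(X_i) - f^*(X_i'))_+$ combined with $\phi(-u)\leq \phi(u)$ for $u\geq 0$ and a careful bookkeeping of $\sum_i \mathbb{E}[f^*(X_i)^2]$ in terms of $\sigma^2$ and of $n\sigma^2 + 2b\sqrt{n}\,\mathbb{E} Z$ (the latter absorbing the centering correction $\sum_i Pf^{*2}$ via $Pf^{*2}\leq \sigma^2$ plus the uniform bound $|f^*|\leq b$) yields
\begin{align*}
\sum_{i=1}^n \mathbb{E}\bigl[e^{\lambda Z}\,\phi(-\lambda(Z-Z^{(i)}))\bigr]\leq \frac{\phi(\lambda b)}{b^2}\bigl(n\sigma^2 + 2b\sqrt{n}\,\mathbb{E} Z\bigr)\,\mathbb{E} e^{\lambda Z} = \frac{\phi(\lambda b)}{b^2}\cdot n\bar\sigma^2\cdot \mathbb{E} e^{\lambda Z}.
\end{align*}

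With $\psi(\lambda)\equiv \log \mathbb{E} e^{\lambda(Z-\mathbb{E} Z)}$, the display above is equivalent to the differential inequality $\lambda \psi'(\lambda) - \psi(\lambda)\leq n\bar\sigma^2\,\phi(\lambda b)/b^2$. Herbst's argument (dividing by $\lambda^2$ and integrating from $0$) gives the Bennett-type bound $\psi(\lambda)\leq (n\bar\sigma^2/b^2)(e^{\lambda b}-\lambda b-1)$. Inverting via Markov's inequality and optimizing $\lambda$, together with the elementary inequality $h^{-1}(x)\leq \sqrt{2x}+x/3$ where $h(u)=(1+u)\log(1+u)-u$, yields
\begin{align*}
\Prob\bigl(Z - \mathbb{E} Z \geq \sqrt{2\bar\sigma^2 x} + b x/(3\sqrt{n})\bigr)\leq e^{-x},
\end{align*}
which is the claimed inequality after recalling $\mathbb{E} Z = \mathbb{E}\sup_{f\in\mathcal{F}}|\nu_n f|$. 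The main obstacle will be producing the sharp variance proxy $\bar\sigma^2=\sigma^2 + 2bn^{-1/2}\mathbb{E}\sup_{f\in\mathcal{F}}|\nu_n f|$ rather than an inflated version: this requires the Bousquet trick of handling the maximizer $f^*$ (or a limiting maximizing sequence, justified by countability of $\mathcal{F}$) and exploiting $\phi(-u)\leq \phi(u)$ for $u\geq 0$ in the entropy bound, rather than the symmetric estimate $(Z-Z^{(i)})^2$ which loses the correct constant.
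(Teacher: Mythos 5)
The paper does not prove this lemma at all: it is quoted verbatim as Theorem 3.3.9 of Gin\'e--Nickl (Talagrand's inequality in Bousquet's form), so there is no internal proof to compare against, and your outline is essentially the standard entropy-method proof of that cited theorem (symmetrize to a one-sided supremum, modified log-Sobolev inequality, Bousquet's control of the entropy term, Herbst integration, Bennett inversion via $h^{-1}(x)\leq\sqrt{2x}+x/3$). As a self-contained argument, however, it has two real problems. First, a scaling inconsistency: with $Z=\sup_f\nu_nf$ carrying the $n^{-1/2}$ normalization, the increments $Z-Z^{(i)}$ are of size $b/\sqrt{n}$, so the correct Bennett-type bound is $\psi(\lambda)\leq (n\bar\sigma^2/b^2)\,\phi(\lambda b/\sqrt{n})$; your displays with $\phi(\lambda b)$ would, after Chernoff inversion, only give deviations $\sqrt{2n\bar\sigma^2x}+bx/3$, which is weaker than the claimed $\sqrt{2\bar\sigma^2x}+bx/(3\sqrt{n})$ by a factor $\sqrt{n}$. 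This is mechanically fixable (work with the unnormalized sums $\sum_i(f(X_i)-Pf)$ and rescale at the end), but as written the chain does not deliver the lemma.

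Second, and more substantively, the inequality $\sum_i\E[e^{\lambda Z}\phi(-\lambda(Z-Z^{(i)}))]\leq (\phi(\cdot)/b^2)\,n\bar\sigma^2\,\E e^{\lambda Z}$ with the sharp proxy $\bar\sigma^2=\sigma^2+2bn^{-1/2}\E\sup_f|\nu_nf|$ is exactly the hard core of Bousquet's theorem, and the route you indicate (replace-one differences, the maximizer bound $(Z-Z^{(i)})_+\leq n^{-1/2}(f^*(X_i)-f^*(X_i'))_+$, and $\phi(-u)\leq\phi(u)$) does not establish it: the terms with $Z<Z^{(i)}$ are governed by the maximizer of $Z^{(i)}$, which depends on $X_i'$, and its interaction with the weight $e^{\lambda Z}$ is precisely what a naive bookkeeping cannot handle while keeping the constant $2$ and the scale $b$ (rather than $2b$, given that your functions are not assumed centered). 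Bousquet's proof instead passes to leave-one-out suprema $Z_i$, introduces $Y_i=f^{*,i}(X_i)$ with zero conditional mean given $(X_j)_{j\neq i}$, and uses a convexity/auxiliary scalar inequality to close the differential inequality; a cruder version of your argument would only produce Klein--Rio-type constants. For the purposes of this paper the exact constants are immaterial (the lemma is only used with unspecified constants $C$), but since you aim to reproduce the quoted statement, this step needs the full Bousquet argument rather than an appeal to careful bookkeeping.
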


Let $J(\delta,\mathcal{F},L_2) \equiv   \int_0^\delta  \sup_Q\sqrt{1+\log \mathcal{N}(\epsilon\pnorm{F}{Q,2},\mathcal{F},L_2(Q))}\ \d{\epsilon}$, where the supremum is taken over all finitely discrete probability measures. The following local maximal inequality for the empirical process due to \cite{gine2006concentration,van2011local} is useful.
\begin{lemma}\label{lem:local_maximal_ineq}
	Let $\mathcal{F}$ be a countable class of real-valued measurable functions such that $\sup_{f \in \mathcal{F}} \pnorm{f}{\infty}\leq 1$, and $X_1,\ldots,X_n$'s are i.i.d. random variables with law $P$. Then with $\mathcal{F}(\delta)\equiv \{f \in \mathcal{F}:Pf^2<\delta^2\}$, 
	\begin{align}\label{ineq:local_maximal_uniform}
	\E \sup_{f \in \mathcal{F}(\delta)}\bigabs{\nu_n(f)}  \lesssim J(\delta,\mathcal{F},L_2)\bigg(1+\frac{J(\delta,\mathcal{F},L_2)}{\sqrt{n} \delta^2 \pnorm{F}{P,2}}\bigg)\pnorm{F}{P,2}.
	\end{align}
\end{lemma}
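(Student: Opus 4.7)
The plan is to follow the standard chaining-plus-symmetrization route of \cite{gine2006concentration,van2011local}, producing the two summands visible in \eqref{ineq:local_maximal_uniform}. First I would apply the usual symmetrization inequality to reduce $\E \sup_{f\in\mathcal{F}(\delta)}\abs{\nu_n f}$ to (twice) the conditional Rademacher process $\epsilon\mapsto n^{-1/2}\sum_{i=1}^n \epsilon_i f(X_i)$. Conditional on $X_1,\dots,X_n$, this process is sub-Gaussian with intrinsic metric $\pnorm{f-g}{\Prob_n,2}$, so Dudley's entropy integral (Lemma \ref{lem:dudley_entropy_integral}) bounds the conditional supremum by
\begin{align*}
\int_0^{\Delta_n} \sqrt{\log \mathcal{N}(\epsilon,\mathcal{F}(\delta),L_2(\Prob_n))}\,d\epsilon, \qquad \Delta_n \equiv \sup_{f\in\mathcal{F}(\delta)}\pnorm{f}{\Prob_n,2}.
\end{align*}
Rescaling by the envelope $F$ and passing to uniform entropy dominates this by $J\bigl(\Delta_n/\pnorm{F}{\Prob_n,2},\mathcal{F},L_2\bigr)\pnorm{F}{\Prob_n,2}$.

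Second, I would take expectation in $X$. Jensen's inequality together with concavity of $\delta\mapsto J(\delta,\mathcal{F},L_2)/\delta$ lets the expectation be pulled through the chaining integral, so we arrive at the intermediate bound $\E\sup_{f\in\mathcal{F}(\delta)}\abs{\nu_n f} \lesssim J(\bar{\Delta}_n,\mathcal{F},L_2)\pnorm{F}{P,2}$ with $\bar{\Delta}_n\equiv (\E\Delta_n^2)^{1/2}/\pnorm{F}{P,2}$. It then remains to show $\bar{\Delta}_n^2\pnorm{F}{P,2}^2 \lesssim \delta^2 + n^{-1/2}J(\bar{\Delta}_n,\mathcal{F},L_2)\pnorm{F}{P,2}$: splitting $\Prob_n f^2 = Pf^2 + (\Prob_n-P)f^2$, the first term is $\le \delta^2$ by definition of $\mathcal{F}(\delta)$, while the second term is controlled by a second symmetrization followed by the Rademacher contraction principle applied to the $2$-Lipschitz map $x\mapsto x^2$ on $[-1,1]$ (which is available precisely because of the envelope condition $\sup_{f\in\mathcal{F}}\pnorm{f}{\infty}\le 1$). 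Solving this quadratic inequality for $\bar{\Delta}_n$ and substituting back produces the stated bound, with the additive correction $J(\delta,\mathcal{F},L_2)^2/(\sqrt{n}\delta^2\pnorm{F}{P,2})$ arising exactly from the self-bounding step.

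The main obstacle is the self-bounding inequality in the third step: one must arrange so that the contraction reduces $\E\sup_{f\in\mathcal{F}(\delta)}\abs{(\Prob_n-P)f^2}$ back to a chaining expression linear in $J(\bar{\Delta}_n,\mathcal{F},L_2)$ (rather than something that interacts multiplicatively with $\bar{\Delta}_n$ in a way that prevents closed-form inversion). The bounded-envelope hypothesis is crucial here, since the Lipschitz constant of $x\mapsto x^2$ on the unit interval is what makes the contraction tight. Since the paper cites \cite{gine2006concentration,van2011local} for the full argument, the proof itself is only invoked as a black-box tool downstream, and the verification above follows the written proofs in those references verbatim.
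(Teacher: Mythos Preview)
The paper does not give its own proof of this lemma: it is stated as a known result ``due to \cite{gine2006concentration,van2011local}'' and used only as a black-box tool in the proofs of Lemmas \ref{lem:ratio_indicator} and \ref{lem:ratio_random_replace}. Your outline (symmetrization, conditional Dudley bound with the uniform entropy integral, Jensen via concavity of $\delta\mapsto J(\delta)/\delta$, and the self-bounding step via contraction on $x\mapsto x^2$ using the unit envelope) is exactly the argument in those references, so there is nothing in the paper to compare against and your sketch is faithful to the cited sources.
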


\begin{proof}[Proof of Lemma \ref{lem:ratio_indicator}]
	Consider the class $\mathcal{F}\equiv \{g\bm{1}_{[h_1,h_2]}: 0\leq h_1\leq h_2\leq 1\}$. Let $\alpha_n = \sqrt{L_n\log n/n}$ where $L_n\geq 1$. For $j\geq 1$, let $\mathcal{F}_j\equiv \{g\bm{1}_{[h_1,h_2]}: 2^{j-1}\alpha_n\leq \sqrt{P g\bm{1}_{[h_1,h_2]}}\leq 2^j\alpha_n\}$. Let $\ell$ be the smallest integer such that $2^\ell \alpha_n\geq 1$. Then by Talagrand's concentration inequality (cf. Lemma \ref{lem:talagrand_conc_ineq}) for bounded empirical processes, it follows that for any $s_j\geq 0$,
	\begin{align*}
	\Prob\bigg[\sup_{f \in \mathcal{F}_j} \abs{\nu_n (f)}\geq C_1\bigg(\E\sup_{f \in \mathcal{F}_j} \abs{\nu_n (f)}+ 2^{j} \alpha_n \sqrt{s_j}+\frac{s_j}{\sqrt{n}}\bigg)\bigg]\leq e^{-s_j}.
	\end{align*}
	Hence by a union bound, using the estimate (by e.g. Lemma \ref{lem:local_maximal_ineq}) that $
	\E\sup_{f \in \mathcal{F}_j} \abs{\nu_n (f)}\lesssim 2^j \alpha_n \sqrt{\log n}$, and choosing $s_j = 3 \log n$, it follows that with probability at least $1-n^{-2}$, it holds that
	\begin{align*}
	\sup_{f \in \mathcal{F}: Pf\geq \alpha_n^2}\biggabs{\frac{\Prob_n f}{Pf}-1}&=n^{-1/2}\cdot \sup_{f \in \mathcal{F}: Pf\geq \alpha_n^2}\biggabs{\frac{\nu_n(f)}{Pf}} \\
	&= n^{-1/2}\cdot \max_{1\leq j\leq \ell} \sup_{f \in \mathcal{F}_j} \biggabs{\frac{\nu_n(f)}{Pf}}\\
	&\leq n^{-1/2}\cdot \max_{1\leq j\leq \ell} \frac{\E\sup_{f \in \mathcal{F}_j} \abs{\nu_n (f)}+ 2^{j} \alpha_n \sqrt{s_j}+\frac{s_j}{\sqrt{n}}}{2^{2j-2}\alpha_n^2}\\
	&\lesssim \sqrt{\frac{\log n}{n \alpha_n^2}}+\frac{\log n}{n \alpha_n^2}\leq L_n^{-1/2}.
	\end{align*}
	The proof is complete.
%	The claim follows since $Pf\geq \alpha_n^2$ is implied by the condition $L_n \log n/n \leq \min_{x \in \Gamma} \pi(x) \int_{\prod_{k=1}^d (h_2-h_1)_k} g$.
\end{proof}

\begin{lemma}\label{lem:ratio_random_replace}
Let $\omega_n \searrow 0$ be such that $\omega_n^{1+\sum_k \alpha_k^{-1}}\gg \log n/n$, and let $r_n \equiv  (\omega_n^{1/\alpha_1},\ldots,\omega_n^{1/\alpha_d})$. Let $P$ be a probability measure on $[0,1]^d$ with Lebesgue density bounded away from $0$ and $\infty$ and $x_0 \in (0,1)^d$. For any $c_0\geq 1$ and $\bm{j} \in J_\ast$, we may find some $u_n \searrow 0$ such that with probability at least $1-\mathcal{O}(n^{-2})$, 
\begin{align*}
&\sup_{ \substack{ c_0^{-1}\bm{1}\leq h_1\leq c_0 \bm{1},\\ h_2\geq 0, x_0+h_2r_n\leq 1} } \tilde{\omega}_n^{-1}(h_1,h_2;\bm{j})\bigg\lvert\frac{\Prob_n (X-x_0)^{\bm{j}}\bm{1}_{[x_0-h_1r_n,x_0+h_2r_n]} }{\Prob_n \bm{1}_{[x_0-h_1r_n,x_0+h_2r_n]}}\\
&\qquad-\frac{P (X-x_0)^{\bm{j}}\bm{1}_{[x_0-h_1r_n,x_0+h_2r_n]} }{P\bm{1}_{[x_0-h_1r_n,x_0+h_2r_n]}} \bigg\rvert  \lesssim u_n+L_n^{-1/2}.
\end{align*}
Here $\tilde{\omega}_n(h_1,h_2;\bm{j}) \equiv \omega_n \cdot \prod_k \big((h_1)_k^{j_k}\vee (h_2)_k^{j_k}\big)$, and $L_n$ is a slowly growing sequence taken from Lemma \ref{lem:ratio_indicator}.
\end{lemma}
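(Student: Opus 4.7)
The plan is to reduce Lemma \ref{lem:ratio_random_replace} to Lemma \ref{lem:ratio_indicator} via the ratio identity
\begin{align*}
\frac{\Prob_n f}{\Prob_n g} - \frac{Pf}{Pg} = \frac{(\Prob_n - P)f}{\Prob_n g} - \frac{Pf}{Pg} \cdot \frac{(\Prob_n - P)g}{\Prob_n g},
\end{align*}
with $f(X) \equiv (X-x_0)^{\bm{j}}\bm{1}_A(X)$, $g\equiv \bm{1}_A$, and $A = A_{h_1,h_2} \equiv [x_0-h_1 r_n,x_0+h_2 r_n]$. The key pointwise bound is that, for $\bm{j}\in J_\ast$ (so $\sum_k j_k/\alpha_k = 1$), on $A$ one has $\abs{(X-x_0)^{\bm{j}}}\leq \prod_k((h_1)_k\vee(h_2)_k)^{j_k}(r_n)_k^{j_k} = \omega_n \prod_k((h_1)_k\vee(h_2)_k)^{j_k} = \tilde{\omega}_n(h_1,h_2;\bm{j})$, so $\abs{Pf/Pg}\leq \tilde{\omega}_n$ and $\mathrm{Var}_P f\leq \tilde{\omega}_n^2 Pg$. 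The assumption $c_0^{-1}\bm{1}\leq h_1\leq c_0\bm{1}$ further forces $Pg\gtrsim c_0^{-d}\omega_n^{\sum_k\alpha_k^{-1}}$, which by $\omega_n^{1+\sum_k\alpha_k^{-1}}\gg \log n/n$ (and $\omega_n\to 0$) exceeds $L_n\log n/n$ for any slowly growing $L_n$.

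The denominator term is handled directly by Lemma \ref{lem:ratio_indicator}: with probability $\geq 1-n^{-2}$, $\abs{\Prob_n g/Pg - 1}\leq CL_n^{-1/2}$ uniformly, and combining with $\abs{Pf/Pg}\leq \tilde{\omega}_n$ yields a contribution of $CL_n^{-1/2}$ after the $\tilde{\omega}_n^{-1}$ normalization. For the numerator term, I would peel dyadically in $h_2$: for $\bm{\ell}\in\mathbb{Z}_{\geq 0}^d$, set $\mathcal{H}_{\bm{\ell}}\equiv\{h_2: (h_2)_k\vee c_0^{-1}\in[2^{\ell_k-1},2^{\ell_k}]\}$, so that both $\tilde{\omega}_n(h_1,h_2;\bm{j})$ and $Pg$ vary within universal constants of block-values $\tilde{\omega}_n^{(\bm{\ell})}$ and $P^{(\bm{\ell})}$. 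The class $\mathcal{F}_{\bm{\ell}} \equiv \{(X-x_0)^{\bm{j}}\bm{1}_A: h_1\in [c_0^{-1}\bm{1},c_0\bm{1}], h_2\in\mathcal{H}_{\bm{\ell}}\}$ has VC-type covering (rectangles times a fixed polynomial), sup-norm $\tilde{\omega}_n^{(\bm{\ell})}$, and variance envelope $(\tilde{\omega}_n^{(\bm{\ell})})^2 P^{(\bm{\ell})}$. Talagrand's inequality (Lemma \ref{lem:talagrand_conc_ineq}) together with the local maximal inequality (Lemma \ref{lem:local_maximal_ineq}) then delivers, with probability $\geq 1-n^{-3}$,
\begin{align*}
\sup_{\mathcal{F}_{\bm{\ell}}}\abs{(\Prob_n-P)f}\lesssim \tilde{\omega}_n^{(\bm{\ell})}\bigg(\sqrt{\frac{P^{(\bm{\ell})}\log n}{n}} + \frac{\log n}{n}\bigg).
\end{align*}
A union bound over the $\mathcal{O}(\log^d n)$ relevant blocks, followed by division by $\tilde{\omega}_n\cdot \Prob_n g \asymp \tilde{\omega}_n Pg$ (using the denominator control above), yields $\sqrt{\log n/(n Pg)} + \log n/(n Pg)\lesssim L_n^{-1/2}$, with the remaining polylogarithmic overhead absorbed into $u_n\searrow 0$.

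The main obstacle is ensuring that within each peeling block, the normalizer $\tilde{\omega}_n(h_1,h_2;\bm{j})$ co-varies with the variance proxy $Pg = P\bm{1}_A$ up to constants: when $j_k = 0$ for some coordinate, $(h_2)_k$ does not enter $\tilde{\omega}_n$ but still affects $Pg$, so the peeling index must range over the full $\mathbb{Z}_{\geq 0}^d$ rather than only coordinates with $j_k>0$. Matching the two quantities against the same block index is what makes the ratio $\abs{(\Prob_n-P)f}/(\tilde{\omega}_n\Prob_n g)$ collapse cleanly into $\sqrt{\log n/(nPg)}$, which is uniformly $\lesssim L_n^{-1/2}$ by the lower bound on $Pg$; uniformity in $h_1$ over the compact range $[c_0^{-1}\bm{1},c_0\bm{1}]$ is absorbed into the same peeling by a finer polylogarithmic discretization.
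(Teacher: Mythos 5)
Your proposal is correct and follows essentially the same route as the paper's proof: writing $A=[x_0-h_1r_n,x_0+h_2r_n]$, both arguments reduce the ratio deviation to (i) the indicator-ratio control of Lemma \ref{lem:ratio_indicator} (contributing the $L_n^{-1/2}$ term) and (ii) a uniform bound on the centered empirical process $(\Prob_n-P)\big((X-x_0)^{\bm{j}}\bm{1}_{A}\big)$ over rectangles, obtained via dyadic peeling in $h_2$, the local maximal inequality (Lemma \ref{lem:local_maximal_ineq}), Talagrand's inequality (Lemma \ref{lem:talagrand_conc_ineq}), and a union bound over the polylogarithmically many blocks (contributing $u_n$), using $P\bm{1}_A\gtrsim c_0^{-d}\omega_n^{\sum_k\alpha_k^{-1}}\gg \log n/n$ exactly as the paper does. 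The only cosmetic difference is the algebraic decomposition: the paper controls $\Prob_n\abs{(X-x_0)^{\bm{j}}}\bm{1}_A/P\bm{1}_A\lesssim \tilde{\omega}_n$ through a further application of Lemma \ref{lem:ratio_indicator}, whereas you bound the deterministic ratio $\abs{P(X-x_0)^{\bm{j}}\bm{1}_A}/P\bm{1}_A\leq \tilde{\omega}_n$ pointwise and divide the fluctuation term by $\Prob_n\bm{1}_A$, which the same denominator control handles.
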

\begin{proof}
For notational simplicity, we prove the case $c_0=1$ and write $h_2\equiv h$, $\tilde{\omega}_n(h)\equiv \tilde{\omega}_n(\bm{1},h;\bm{j})$. The general case follows from minor modifications. Let $g_0(x) \equiv (x-x_0)^{\bm{j}}$. By Lemma \ref{lem:ratio_indicator}, we only need to prove with the prescribed probability,
\begin{align}\label{ineq:ratio_random_replace_1}
\sup_{h\geq 0, x_0+hr_n\leq 1} \tilde{\omega}_n^{-1}(h)\biggabs{\frac{\Prob_n \big(g_0\bm{1}_{[x_0-r_n,x_0+hr_n]}\big) }{P\bm{1}_{[x_0-r_n,x_0+hr_n]}} } \lesssim 1,
\end{align}
and
\begin{align}\label{ineq:ratio_random_replace_2}
\sup_{h\geq 0, x_0+hr_n\leq 1}\tilde{\omega}_n^{-1}(h) \biggabs{\frac{(\Prob_n-P) \big(g_0\bm{1}_{[x_0-r_n,x_0+hr_n]}\big) }{P\bm{1}_{[x_0-r_n,x_0+hr_n]}} } \lesssim u_n.
\end{align}
For (\ref{ineq:ratio_random_replace_1}), note that for any $h\geq 0$, 
\begin{align*}
P\abs{g_0} \bm{1}_{[x_0-r_n,x_0+hr_n]}\gtrsim r_n^{\bm{j}+\bm{1}} = \omega_n^{1+\sum_k \alpha_k^{-1}}\gg \log n/n,
\end{align*}
so we may apply Lemma \ref{lem:ratio_indicator} to bound $(\ref{ineq:ratio_random_replace_1})$ by
\begin{align*}
\sup_{h\geq 0, x_0+hr_n\leq 1} \tilde{\omega}_n^{-1}(h)\frac{P  \abs{g_0}\bm{1}_{[x_0-r_n,x_0+hr_n]} }{P\bm{1}_{[x_0-r_n,x_0+hr_n]}}  \lesssim \sup_{h\geq 0, x_0+hr_n\leq 1} \tilde{\omega}_n^{-1}(h) \prod_{k=1}^d \big((h\vee \bm{1}) r_n\big)_k^{j_k} \lesssim 1.
\end{align*}
Next we consider (\ref{ineq:ratio_random_replace_2}). By Lemma \ref{lem:local_maximal_ineq}, since for any $\{\ell_k\in \mathbb{Z}_{\geq 0}\}_{k=1}^d$, $\sup_{2^{\ell_k}-1\leq h_k\leq 2^{\ell_k+1}-1} \mathrm{Var}_P \big(g_0\bm{1}_{[x_0-r_n,x_0+hr_n]}\big)\lesssim r_n^{2\bm{j}+\bm{1}} \prod_{k=1}^d 2^{2\ell_k}$, we have
\begin{align*}
&\E\sup_{2^{\ell_k}-1\leq h_k\leq 2^{\ell_k+1}-1}\bigabs{\nu_n\big( g_0\bm{1}_{[x_0-r_n,x_0+hr_n]}\big) }\leq C \sqrt{r_n^{2\bm{j}+\bm{1}} 2^{\sum_k 2\ell_k} \log n}.
\end{align*}
Then with 
\begin{align*}
u_n \equiv  K \max\bigg\{C  \sqrt{\frac{\log n}{n r_n^{\bm{1}}} },\frac{\log n}{n \omega_n^{1+\sum_k \alpha_k^{-1}}} \bigg\} = \mathfrak{o}(1)
\end{align*}
for a large enough constant $K>0$, by Talagrand's concentration inequality (cf. Lemma \ref{lem:talagrand_conc_ineq}), for $n$ large,
\begin{align*}
&\Prob\bigg(\sup_{h\geq 0}\tilde{\omega}_n^{-1}(h)\biggabs{\frac{\nu_n\big( g_0\bm{1}_{[x_0-r_n,x_0+hr_n]}\big) }{P\bm{1}_{[x_0-r_n,x_0+hr_n]}} }> \sqrt{n} u_n\bigg)\\
&\leq \sum_{\ell_k\geq 0,1\leq k\leq d} \Prob\bigg(\sup_{2^{\ell_k}-1\leq h_k\leq 2^{\ell_k+1}-1} \bigabs{\nu_n\big( g_0\bm{1}_{[x_0-r_n,x_0+hr_n]}\big) }\gtrsim\sqrt{n} u_n\cdot  \omega_n r_n^{\bm{1}} 2^{\sum_k 2\ell_k } \bigg)\\
&\leq \sum_{\ell_k\geq 0,1\leq k\leq d} \Prob\bigg(\sup_{2^{\ell_k}-1\leq h_k\leq 2^{\ell_k+1}-1} \bigabs{\nu_n\big( g_0\bm{1}_{[x_0-r_n,x_0+hr_n]}\big) }\\
&\qquad - \E \sup_{2^{\ell_k}-1\leq h_k\leq 2^{\ell_k+1}-1}\bigabs{\nu_n\big( g_0\bm{1}_{[x_0-r_n,x_0+hr_n]}\big) } \gtrsim \sqrt{n} u_n\cdot \omega_n r_n^{\bm{1}} 2^{\sum_k 2\ell_k }  \bigg)\\
&\leq \sum_{\ell_k\geq 0,1\leq k\leq d} \exp\bigg(-C' \min\bigg\{\frac{n u_n^2\cdot \omega_n^2 r_n^{2\cdot \bm{1}} 2^{\sum_k 4\ell_k}}{r_n^{2\bm{j}+\bm{1}} 2^{ \sum_k 2\ell_k} }, n u_n \cdot \omega_n r_n^{\bm{1}}  2^{\sum_k 2\ell_k}\bigg\}\bigg)\\
& \leq \sum_{\ell_k\geq 0,1\leq k\leq d} \exp\big(-C' n u_n (u_n\wedge \omega_n) r_n^{\bm{1}} \cdot 2^{\sum_k 2\ell_k}\big)\\
&\leq  \sum_{\ell_k\geq 0,1\leq k\leq d}  \exp\big(-\min\big\{K^2 C'C^2, KC' \big\} \log n \cdot 2^{\sum_k 2\ell_k}\big)\leq n^{-2},
\end{align*}
as desired.
\end{proof}

\begin{lemma}\label{lem:size_difference_gaussian_small_dev}
	Let $(a,b,\gamma_\ast) \in \R_{\geq 0}^3$ be such that $a>1, 0<b<\gamma_\ast< b+(a-1)$. Let $\mathcal{H}_{a,b,\gamma_\ast}(c)\equiv \{(h_1,h_2) \in \R_{\geq 0}^d\times \R_{\geq 0}^d: 0\leq (h_1)_k\leq c^a\bm{1}_{\kappa_\ast \leq k\leq s}+(x_0)_k \bm{1}_{s+1\leq k\leq d}, 0\leq (h_1)_d \leq c^{-b}, 0\leq (h_2)_k \leq c\bm{1}_{\kappa_\ast \leq k\leq s}+(1-x_0)_k \bm{1}_{s+1\leq k\leq d},0\leq (h_2)_d\leq c^{-\gamma_\ast}\}$. Let $\G$ be defined as in Theorem \ref{thm:limit_distribution_pointwise}. Then there exists some constant $C_{d,a}>0$ such that for any $c>1$,
	\begin{align*}
	\E\sup_{(h_1,h_2)\in \mathcal{H}_{a,b,\gamma_\ast}(c)} \abs{\G(h_1,h_2)-\G(h_1,h_2\bm{1}_{[s+1:d-1]} )}\leq C_{d,a}\cdot\sigma \big(c^{as_\ast -\gamma_\ast-a\bm{1}_{s=d}}\log c\big)^{1/2}.
	\end{align*}
\end{lemma}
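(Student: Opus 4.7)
The plan is to apply Dudley's entropy integral bound (Lemma \ref{lem:dudley_entropy_integral}) to the mean-zero Gaussian process
\[
Y(h_1,h_2)\equiv \G(h_1,h_2)-\G(h_1,h_2\bm{1}_{[s+1:d-1]}),\qquad (h_1,h_2)\in \mathcal{H}_{a,b,\gamma_\ast}(c),
\]
which is automatically sub-Gaussian for its intrinsic metric $d_Y^2(u,u')\equiv \E(Y(u)-Y(u'))^2$. Note that $Y$ vanishes at every index $(h_1,h_2)$ with $(h_2)_k=0$ for all $k\in\{\kappa_\ast,\ldots,s\}\cup\{d\}$ (e.g.\ $h_2=0$), so Dudley's theorem applies cleanly once suitable variance and metric-entropy estimates are in hand.

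The first, and main, step will be the uniform variance bound
\[
\sup_{(h_1,h_2)\in\mathcal{H}_{a,b,\gamma_\ast}(c)}\mathrm{Var}(Y(h_1,h_2))\le C_d\, c^{as_\ast-\gamma_\ast-a\bm{1}_{s=d}}.
\]
Using the covariance structure (\ref{eqn:cov_G_simple}), this variance equals $V(h_1,h_2)-V(h_1,h_2\bm{1}_{[s+1:d-1]})$, where $V(h_1,h_2)\equiv\prod_{k=\kappa_\ast}^d ((h_1)_k+(h_2)_k)$. I would telescope this product difference: only the indices $k\in\{\kappa_\ast,\ldots,s\}\cup\{d\}$ contribute, since $h_2$ and $h_2\bm{1}_{[s+1:d-1]}$ agree on $\{s+1,\ldots,d-1\}$. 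Plugging in the box constraints defining $\mathcal{H}_{a,b,\gamma_\ast}(c)$, the $k=d$ summand is of size $c^{as_\ast-\gamma_\ast-a\bm{1}_{s=d}}$ (the loss of one factor $c^a$ when $s=d$ reflects that the ``small'' coordinate $d$ then overlaps a ``large'' coordinate), while each $k\in\{\kappa_\ast,\ldots,s\}\setminus\{d\}$ summand is at most $c^{a(s_\ast-1)+1-b}$. The strict hypothesis $\gamma_\ast<b+(a-1)$ is \emph{exactly} what makes the latter dominated by the former, so the variance is driven by the single small-deviation coordinate $(h_2)_d\le c^{-\gamma_\ast}$. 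The careful bookkeeping of how many coordinates contribute $c^a$ versus $c^{-b}$ in each telescoped summand, and verifying that the hypothesized inequality is the tightest trade-off, will be the main technical obstacle.

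The second step will handle the intrinsic metric. Using the $L^2$-triangle inequality $d_Y\le 2 d_\G$ together with the crude Lipschitz estimate $d_\G^2(u,u')\le C_d\,c^{as_\ast}\|u-u'\|$ (obtained by a coarser telescoping of $V$ that uses only the ambient size of $\mathcal{H}_{a,b,\gamma_\ast}(c)$), I reduce to Euclidean covering on the effectively $2d_\ast$-dimensional set $\mathcal{H}_{a,b,\gamma_\ast}(c)$ of polynomial-in-$c$ diameter, giving $\log\mathcal{N}(\epsilon,\mathcal{H}_{a,b,\gamma_\ast}(c),d_Y)\le C_d(\log c+\log(1/\epsilon))$ for $\epsilon\in(0,1]$. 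Setting $D\equiv C_d^{1/2}c^{(as_\ast-\gamma_\ast-a\bm{1}_{s=d})/2}$ for the $d_Y$-diameter coming from Step 1, Dudley's integral then yields
\[
\E\sup_{\mathcal{H}_{a,b,\gamma_\ast}(c)}|Y|\lesssim\int_0^{D}\sqrt{C_d(\log c+\log(1/\epsilon))}\,d\epsilon\lesssim D\sqrt{\log c},
\]
where the final estimate uses $\log(1/D)=O(\log c)$ to absorb the $\sqrt{\log(1/\epsilon)}$ piece near zero. Restoring the free scalar $\sigma$ (carried along from applications where the ambient limit process is $\sigma\cdot\G$) gives the claimed bound.
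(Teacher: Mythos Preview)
Your proposal is correct and follows essentially the same route as the paper: define the difference process, bound its variance (equivalently, the $d_Y$-diameter of $\mathcal{H}_{a,b,\gamma_\ast}(c)$), compare $d_Y$ to a Euclidean metric via a crude Lipschitz bound, and apply Dudley's entropy integral. Your telescoping of $V(h_1,h_2)-V(h_1,h_2\bm{1}_{[s+1:d-1]})$ is a slightly cleaner way to isolate the role of the hypothesis $\gamma_\ast<b+(a-1)$ than the paper's factoring argument $(c^a+c)^{s_\ast}(c^{-b}+c^{-\gamma_\ast})-(c^a)^{s_\ast}c^{-b}$, but both arrive at the same bound $C_d\,c^{as_\ast-\gamma_\ast-a\bm{1}_{s=d}}$; and your metric constant $c^{as_\ast}$ is in fact the correct one (the paper's $c^{d_\ast}$ is a minor slip that is harmless since the dependence is only logarithmic).
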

\begin{proof}
	Without loss of generality, we assume $\sigma=1$. Let $\mathbb{W}(h_1,h_2)\equiv \G(h_1,h_2)-\G(h_1,h_2 \bm{1}_{[s+1:d-1]})$. Then the process $\{\mathbb{W}(h_1,h_2): h_1,h_2\geq 0\}$ is a Gaussian process with the natural induced metric $d_{\mathbb{W}}$ given as follows: for $(h_1,h_2),(h_1',h_2')$,
	\begin{align*}
	&d_{\mathbb{W}}^2\big((h_1,h_2),(h_1',h_2')\big)\\
	&\equiv \E \big(\mathbb{W}(h_1,h_2)-\mathbb{W}(h_1',h_2')\big)^2\\
	& \leq 2\E \big(\G(h_1,h_2)-\G(h_1',h_2')\big)^2+2\E \big(\G(h_1,h_2 \bm{1}_{[s+1:d-1]})-\G(h_1',h_2'\bm{1}_{[s+1:d-1]})\big)^2\\
	&\equiv (I)+(II).
	\end{align*}
	For $(I)$, note that
	\begin{align*}
	(I)&\leq \bigg[ \prod_{k=\kappa_\ast}^d\big(( h_1+h_2)_k \big)- \prod_{k=\kappa_\ast}^d\big(( h_1\wedge h_1'+h_2\wedge h_2')_k \big)\\
	&\qquad\qquad+ \prod_{k=\kappa_\ast}^d\big((h_1'+h_2')_k \big)-\prod_{k=\kappa_\ast}^d\big(( h_1\wedge h_1'+h_2\wedge h_2')_k\big) \bigg]\\
	&\leq C_{d_\ast}c^{d_\ast}\big(\pnorm{h_1-h_1'}{}+\pnorm{h_2-h_2'}{}\big).
	\end{align*}
	The last line follows from Lemma \ref{lem:product_difference}. A similar estimate can be obtained for $(II)$. Hence,
	\begin{align*}
	d_{\mathbb{W}}^2\big((h_1,h_2),(h_1',h_2')\big)&\leq C_{d_\ast}c^{d_\ast}\big(\pnorm{h_1-h_1'}{}+\pnorm{h_2-h_2'}{}\big).
	\end{align*}
	On the other hand, the diameter of the indexing set in the supremum of the process in question can be controlled as follows: If $s<d$,
	\begin{align*}
	D^2_c &\equiv \sup_{(h_1,h_2), (h_1',h_2')\in \mathcal{H}_{a,b,\gamma_\ast}(c)} d_{\mathbb{W}}^2\big((h_1,h_2),(h_1',h_2')\big) \\
	&\leq  C\sup_{(h_1,h_2)\in \mathcal{H}_{a,b,\gamma_\ast}(c)} \bigg(\prod_{k=\kappa_\ast}^d (h_1+h_2)_k-\prod_{k=\kappa_\ast}^d (h_1+h_2\bm{1}_{[s+1:d-1]})_k\bigg)\\
	&\leq C\big[(c^a+c)^{s_\ast}(c^{-b}+c^{-\gamma_\ast})-(c^a)^{s_\ast} c^{-b}\big] \\
	&\leq C\big[c^{as_\ast-b}(1+c^{-(\gamma_\ast-b)})(1+c^{-(a-1)})^{s_\ast}- c^{as_\ast -b}\big]\\
	&\leq C\big[c^{as_\ast-b}(1+C_d' c^{-(\gamma_\ast-b)\wedge (a-1)})- c^{as_\ast -b}\big] = C_d \cdot c^{as_\ast -\gamma_\ast}.
	\end{align*}
	If $s=d$, similarly we have $
	D_c^2\leq C_d\cdot c^{a(s_\ast-1)-\gamma_\ast}$. Now Dudley's entropy integral (cf. Lemma \ref{lem:dudley_entropy_integral}) yields that
	\begin{align*}
	&\E\sup_{(h_1,h_2)\in \mathcal{H}_{a,b,\gamma_\ast}(c)} \abs{\G(h_1,h_2)-\G(h_1,h_2\bm{1}_{[s+1:d-1]})}\\
	&\leq C \int_0^{D_c} \sqrt{\log \mathcal{N}(\epsilon, \{0\leq h_1\leq c^a\bm{1}, 0\leq h_2\leq c\bm{1} \}, d_{\mathbb{W}})}   \ \d{\epsilon}\\
	&\leq C \int_0^{ (C_{d} c^{as_\ast -\gamma_\ast-a\bm{1}_{s=d}})^{1/2} } \sqrt{\log (c^{C_1(a,d)}/\epsilon^{2d})}\ \d{\epsilon}\\
	&\leq C_{a,d}  \sqrt{c^{as_\ast -\gamma_\ast-a\bm{1}_{s=d}}\log c},
	\end{align*}
	as desired.
\end{proof}

\begin{lemma}\label{lem:product_difference}
	For any $c_k \geq 1, k=\kappa_\ast,\ldots,d$, let $g: \prod_{k=\kappa_\ast}^d [0,c_k] \times \prod_{k=\kappa_\ast}^d [0,c_k] \to \R$ be defined as $
	g(h_1,h_2)\equiv \prod_{k=\kappa_\ast}^d( h_1+h_2)_k$. Then
	\begin{align*}
	\abs{g(h_1,h_2)-g( h_1', h_2')}\leq \prod_{k=\kappa_\ast}^d (2c_k)\cdot \sqrt{d_\ast}\big(\pnorm{h_1-h_1'}{}+\pnorm{h_2-h_2'}{}\big).
	\end{align*}
	Here $\pnorm{\cdot}{}$ is the canonical Euclidean norm on $\R^{d_\ast}$.
\end{lemma}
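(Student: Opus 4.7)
The plan is to obtain the Lipschitz bound by a standard telescoping identity for products, combined with a triangle/Cauchy--Schwarz inequality to pass from coordinate-wise differences to Euclidean norms. Set $a_k \equiv (h_1+h_2)_k$ and $b_k \equiv (h_1'+h_2')_k$ for $\kappa_\ast \le k \le d$. By assumption $0 \le a_k, b_k \le 2 c_k$.

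First I would invoke the telescoping identity
\begin{equation*}
\prod_{k=\kappa_\ast}^d a_k - \prod_{k=\kappa_\ast}^d b_k = \sum_{\ell=\kappa_\ast}^d \bigg(\prod_{k=\kappa_\ast}^{\ell-1} a_k\bigg)(a_\ell - b_\ell)\bigg(\prod_{k=\ell+1}^d b_k\bigg),
\end{equation*}
and then apply the triangle inequality together with the pointwise bounds $a_k, b_k \leq 2 c_k$ to get
\begin{equation*}
\abs{g(h_1,h_2) - g(h_1',h_2')} \leq \sum_{\ell=\kappa_\ast}^d \bigg(\prod_{k \neq \ell} (2 c_k)\bigg) \abs{a_\ell - b_\ell}.
\end{equation*}
Since $c_\ell \geq 1$, each prefactor satisfies $\prod_{k \neq \ell}(2 c_k) \leq \prod_{k=\kappa_\ast}^d (2 c_k)$, and by the triangle inequality $\abs{a_\ell - b_\ell} \leq \abs{(h_1 - h_1')_\ell} + \abs{(h_2 - h_2')_\ell}$.

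Combining these observations and then applying Cauchy--Schwarz, namely $\sum_{\ell=\kappa_\ast}^d \abs{(h - h')_\ell} \leq \sqrt{d_\ast}\,\pnorm{h - h'}{}$, yields
\begin{equation*}
\abs{g(h_1,h_2) - g(h_1',h_2')} \leq \prod_{k=\kappa_\ast}^d (2 c_k)\cdot \sqrt{d_\ast}\big(\pnorm{h_1 - h_1'}{} + \pnorm{h_2 - h_2'}{}\big),
\end{equation*}
which is the claimed bound. There is no real obstacle here; the only point requiring a little care is ensuring that the product of the prefactors telescopes into $\prod_{k=\kappa_\ast}^d (2c_k)$ uniformly in $\ell$, which uses exactly the hypothesis $c_k \geq 1$.
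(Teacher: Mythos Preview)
Your proof is correct and essentially matches the paper's argument. The paper phrases the first step as bounding the first-order partial derivatives of $g$ by $\prod_{k=\kappa_\ast}^d (2c_k)$ and invoking a Taylor expansion, while you use the equivalent telescoping identity for products; both arrive at the same coordinate-wise bound and then apply Cauchy--Schwarz to pass to the Euclidean norm.
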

\begin{proof}
	Note that the first order partial derivatives of $g(\cdot,\cdot)$ are all bounded by $\prod_{k=\kappa_\ast}^d (2c_k)$, using Taylor expansion yields that
	\begin{align*}
	\abs{g(h_1,h_2)-g( h_1', h_2')}
	&\leq  \prod_{k=\kappa_\ast}^d (2c_k) \sum_{k=\kappa_\ast}^d \bigabs{(h_1)_k-(h_1')_k}+\bigabs{(h_2)_k-(h_2')_k}\\
	&\leq  \prod_{k=\kappa_\ast}^d (2c_k)\cdot \sqrt{d_\ast}\cdot  \big(\pnorm{h_1-h_1'}{}+\pnorm{h_2-h_2'}{}\big),
	\end{align*}
	completing the proof.
\end{proof}

\begin{lemma}\label{lem:small_deviation_min_max}
	Let $\G$ be defined in Theorem \ref{thm:limit_distribution_pointwise}. Then 
	\begin{align*}
	\Prob\bigg(\min\nolimits_{\substack{0\leq (h_2)_k \leq \bm{1}_{s+1\leq k\leq d}\\ (h_2)_d = 0} } \max\nolimits_{0\leq (h_1)_k\leq \bm{1}_{\kappa_\ast\leq k\leq d}} \G(h_1,h_2)\leq 0\bigg)= 0.
	\end{align*}
\end{lemma}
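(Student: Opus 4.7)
My plan is to reduce the claim to a Brownian-motion fact via a one-dimensional slice of $\G$ in the $h_1$ direction, then upgrade the resulting pointwise-in-$h_2$ positivity to a statement uniform in $h_2$. For each admissible $h_2$ (so $(h_2)_k = 0$ for $k \leq s$ and $k = d$, and $(h_2)_k \in [0,1]$ for $s+1 \leq k \leq d-1$), I will study the one-parameter family
\begin{align*}
G^{h_2}_t \,\equiv\, \G\bigl(\bm{1}_{[\kappa_\ast:d-1]} + t e_d,\, h_2\bigr),\qquad t \in [0,1],
\end{align*}
where $e_d$ denotes the $d$-th standard basis vector. A direct computation from the covariance formula for $\G$ gives $\mathrm{Cov}(G^{h_2}_t, G^{h_2}_{t'}) = \sigma^2(h_2)(t\wedge t')$ with $\sigma^2(h_2) = \prod_{k=s+1}^{d-1}(1+(h_2)_k) \geq 1$ in the fixed-lattice case (and an analogous expression in the random-design case, uniformly positive by the density lower bound on $\pi$). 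Thus $G^{h_2}_\cdot$ is a scaled Brownian motion starting at $G^{h_2}_0 = 0$, since $\G(\bm{1}_{[\kappa_\ast:d-1]}, h_2)$ has vanishing variance (the $k = d$ factor of the covariance product is zero because $(h_2)_d = 0$).

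By the reflection principle, $\sup_{t \in [0,1]} G^{h_2}_t =_d \sigma(h_2)|Z|$ for $Z \sim N(0,1)$, so $M(h_2) := \max_{h_1} \G(h_1, h_2) \geq \sup_t G^{h_2}_t > 0$ almost surely for each fixed $h_2$. Moreover $\G(0, h_2)$ has vanishing variance as well, so with probability one $M(h_2) \geq \G(0, h_2) = 0$ for every $h_2$ simultaneously (using almost sure joint continuity of $\G$ on the compact index set to propagate the fibrewise equality from a countable dense subset). Hence $\{M(h_2) \leq 0\} = \{M(h_2) = 0\}$, and it remains to show that almost surely no $h_2$ in the compact parameter set realizes $M(h_2) = 0$. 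For this I will combine the pointwise small-ball estimate $\Prob(M(h_2) \leq \epsilon) \lesssim \epsilon$ (from the $\sigma(h_2)|Z|$ minorization with $\sigma(h_2) \geq 1$), a Gaussian modulus-of-continuity bound for $h_2 \mapsto M(h_2)$ via Dudley's entropy integral (Lemma \ref{lem:dudley_entropy_integral}), and a union bound over a finite $\delta$-net $Q_\delta$ of the at-most-$(d-s-1)$-dimensional parameter set for $h_2$, tuning $\epsilon,\delta \to 0$ to drive both terms to zero.

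\textbf{Main obstacle.} The delicate part will be the uniformity step: with $|Q_\delta| \sim \delta^{-(d-s-1)}$ and a Brownian-sheet-type modulus of order $\sqrt{\delta \log(1/\delta)}$, a direct union-bound against an $O(\epsilon)$ small-ball cannot be closed from a single one-dimensional slice when $d-s \geq 2$. The remedy will be either to sharpen the small-ball estimate by exploiting that $M(h_2)$ is the supremum of an entire Brownian-sheet-like process on the full $d_\ast$-dimensional $h_1$-cube (yielding small-ball probability polynomially smaller than $\epsilon$), or to invoke Blumenthal's zero-one law at the face $\{(h_1)_d = 0\}$ on which $G^{h_2}_\cdot$ vanishes identically, thereby transferring the instantaneous positivity of Brownian motion near $t = 0$ to uniformity in $h_2$ via the joint continuity of the process in $(h_2, t)$.
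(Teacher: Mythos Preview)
Your setup is correct: the one-dimensional slice $G^{h_2}_t$ is indeed a scaled Brownian motion with $\sigma^2(h_2)\ge 1$, so $M(h_2)>0$ almost surely for each fixed $h_2$, with the linear small-ball bound $\Prob(M(h_2)\le\epsilon)\lesssim\epsilon$. You also correctly diagnose the obstruction: with a $(d{-}s{-}1)$-dimensional $h_2$-set and a $\sqrt{\delta\log(1/\delta)}$ modulus, the chaining bound $\delta^{-(d-s-1)}\sqrt{\delta\log(1/\delta)}$ diverges already when $d-s\ge 2$.

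The gap is that neither of your proposed remedies closes this. For (a), the small-ball of $\sup_{h_1}\G(h_1,h_2)$ does \emph{not} improve to $\epsilon^p$ with $p$ large just because $h_1$ ranges over a $d_\ast$-dimensional cube: restricting to any one line still gives a Brownian motion, and there is no independent ``second line'' (the sheet is highly correlated across the cube, vanishing on the coordinate faces), so the small-ball stays linear in $\epsilon$. For (b), Blumenthal's law gives $\sup_{t\in(0,\epsilon)}G^{h_2}_t>0$ a.s.\ for each \emph{fixed} $h_2$; but the putative minimizer $h_2^\ast$ with $M(h_2^\ast)=0$ is random, so you cannot invoke the almost-sure property of a Brownian motion at that random parameter. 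Joint continuity of $(h_2,t)\mapsto G^{h_2}_t$ does not help here, because the LIL-scale at which positivity occurs near $t=0$ is not uniform in $h_2$ without exactly the kind of argument you are trying to avoid.

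The paper's proof uses a genuinely different mechanism. Via the white-noise representation of $\G$, it partitions the $d$-th $h_1$-coordinate into slabs $[S_{j-1},S_j]$ with the doubly-exponential scaling $S_j=2^{2^j}\delta$, so that the increments across slabs are \emph{independent}. The event $\{\min_{h_2}\max_{h_1}\G\le 0\}$ is then bounded by a product $\prod_{j=1}^J\Prob(Z\le K\sqrt{S_{j-1}/(S_j-S_{j-1})})$ plus a Gaussian-concentration error, where $Z=\min_{u}\max_{v}\mathbb{W}_{\{\kappa_\ast,\ldots,d-1\}}([-u,v])$ is the normalized single-slab min-max. The doubly-exponential growth makes $S_{j-1}/(S_j-S_{j-1})\to 0$, so it suffices to show the much weaker fact $\Prob(Z\le\tau_0)<1$ for some $\tau_0>0$, and then the product of $J\to\infty$ factors each bounded by $1-\epsilon_0$ vanishes. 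This is the key idea you are missing: the independence across slabs converts the problem from ``small-ball $\to 0$'' (which is what your chaining requires and cannot achieve) to ``probability $<1$'' (which is almost trivial). If you want to repair your approach, this slab-independence device is what remedy (a) should become.
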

In particular, for any $\epsilon>0$, there exists some $\rho_{\epsilon}>0$ such that
\begin{align*}
\Prob\bigg(\min\nolimits_{\substack{0\leq (h_2)_k \leq \bm{1}_{s+1\leq k\leq d}\\ (h_2)_d = 0} } \max\nolimits_{0\leq (h_1)_k\leq \bm{1}_{\kappa_\ast\leq k\leq d}} \G(h_1,h_2)>\rho_{\epsilon}\bigg)\geq 1-\epsilon.
\end{align*}
\begin{proof}
Without loss of generality, we may assume that $\kappa_\ast = s+1$, since otherwise we may consider the weaker statement:
\begin{align*}
\Prob\bigg(\min\nolimits_{\substack{0\leq (h_2)_k \leq \bm{1}_{s+1\leq k\leq d}\\ (h_2)_d = 0} } \max\nolimits_{\substack{\bm{1}_{\kappa_\ast \leq k\leq s}\leq (h_1)_k\leq \bm{1}_{\kappa_\ast\leq k\leq d}} } \G(h_1,h_2)\leq 0\bigg)= 0.
\end{align*}
If $\kappa_\ast = d$, the claim follows from the standard reflection principle for Brownian motion, so we work with $\kappa_\ast \leq d-1$. Let $\mathcal{I} \equiv \{S_0,S_1,\ldots,S_J\}$, where $0=S_0<S_1<\ldots<S_J=1$ will be determined later on. For $\mathscr{I} \subset \{\kappa_\ast,\ldots,d\}$, let $\mathbb{W}_{\mathscr{I}}$ be the standard white noise living on the coordinates $\mathscr{I}$, and write $\mathbb{W}\equiv \mathbb{W}_{\{\kappa_\ast,\ldots,d\}}$ for notational simplicity. For any $K>0$,
\begin{align*}
&\Prob\bigg(\min\nolimits_{\substack{0\leq (h_2)_k \leq \bm{1}_{s+1\leq k\leq d}\\ (h_2)_d = 0} } \max\nolimits_{0\leq (h_1)_k\leq \bm{1}_{\kappa_\ast\leq k\leq d}} \G(h_1,h_2)\leq 0\bigg)\\
&\leq 	\Prob\bigg(\forall (h_1)_d \in \mathcal{I}, \min\nolimits_{\substack{0\leq (h_2)_k \leq \bm{1}_{s+1\leq k\leq d}\\ (h_2)_d = 0} } \max\nolimits_{0\leq (h_1)_k\leq \bm{1}_{\kappa_\ast\leq k\leq d-1}} \mathbb{W}([-h_1,h_2])\leq 0\bigg)\\
&\leq \Prob\bigg(\forall 1\leq j\leq J, \min\nolimits_{\substack{0\leq (h_2)_k \leq \bm{1}_{s+1\leq k\leq d}\\ (h_2)_d = 0} } \max\nolimits_{0\leq (h_1)_k\leq \bm{1}_{\kappa_\ast\leq k\leq d-1}}\\
&\qquad\qquad \mathbb{W}\bigg(\big[\big(-(h_1)_{\kappa_\ast},\ldots,-S_{j}\big),\big((h_2)_{\kappa_\ast},\ldots, -S_{j-1}\big)\big]\bigg)\leq K\sqrt{ S_{j-1}}\bigg)\\
&\qquad +\sum_{j=2}^J \Prob\bigg(\max\nolimits_{\substack{0\leq (h_2)_k \leq \bm{1}_{s+1\leq k\leq d}\\ (h_2)_d = 0} } \max\nolimits_{0\leq (h_1)_k\leq \bm{1}_{\kappa_\ast\leq k\leq d-1}}\\
&\qquad\qquad  \bigg\lvert \mathbb{W}\bigg(\big[\big(-(h_1)_{\kappa_\ast},\ldots,-S_{j}\big),\big((h_2)_{\kappa_\ast},\ldots, 0\big)\big]\\
&\qquad\qquad\qquad -\mathbb{W}\bigg(\big[\big(-(h_1)_{\kappa_\ast},\ldots,-S_{j}\big),\big((h_2)_{\kappa_\ast},\ldots, -S_{j-1}\big)\big]\bigg)\bigg\rvert> K\sqrt{ S_{j-1}}\bigg)\\
&\equiv (I)+(II).
\end{align*}
First consider $(II)$. By computing the covariance structure,  $(II)$ can be further reduced as follows: there exists some constant $C_d>0$, such that for $K$ large enough,
\begin{align*}
(II)&  \leq J \cdot \Prob\bigg(C_d' \sup_{v \in [0,1]^{d_\ast-1}}  \mathbb{W}_{\{\kappa_\ast,\ldots,d-1\}}([0,v])>K\bigg)\leq C_d\cdot J \exp(-K^2/C_d).
\end{align*}
Here in the last step we used standard estimates for Gaussian processes (via e.g. Dudley's entropy integral in Lemma \ref{lem:dudley_entropy_integral}) and Gaussian concentration (cf. Lemma \ref{lem:Gaussian_concentration}). 

Next we consider $(I)$. Since the white noise processes involved in $(I)$ are independent for different $j$'s,
\begin{align*}
(I)&\leq \prod_{j=1}^J \Prob\bigg( Z\leq K\sqrt{\frac{S_{j-1}}{S_j-S_{j-1}} }\bigg).
\end{align*}
Here $Z \equiv \min_{0\leq u\leq 1}\max_{0\leq v\leq 1} \mathbb{W}_{\{\kappa_\ast,\ldots,d-1\}}([-u,v])$. Hence, for any $\mathcal{I} =\{S_0,S_1,\ldots,S_J\}$, the probability in question is bounded by
\begin{align*}
C_d\cdot J\exp(-K^2/C_d)+\prod_{j=1}^J \Prob\bigg( Z\leq K\sqrt{\frac{S_{j-1}}{S_j-S_{j-1}} }\bigg).
\end{align*}
Fix $\delta>0,\epsilon>0$. Let $S_j\equiv 2^{2^{j}}\delta$ for $j\geq 1$. Then $J=\log_2\log_2(1/\delta)$. We choose $\delta>0$ such that $J$ is an even integer. For $j\geq 2$, we have
\begin{align*}
\frac{S_{j-1}}{S_j-S_{j-1}} =\frac{2^{2^{j-1}}}{ 2^{2^{j}}-2^{2^{j-1}} } \leq \frac{2}{2^{2^{j-1}}}. 
\end{align*}
The above inequality apparently holds for $j=1$ as $S_0=0$ by definition. Let $K\equiv \sqrt{C_d \log (C_d J/\epsilon)}$.  Then for $J$ large, the probability in question can be further bounded by
\begin{align*}
&\epsilon+ \prod_{j=1}^J \Prob\bigg(Z\leq \frac{\sqrt{2C_d \log (C_d J/\epsilon) }}{2^{2^{j-2}}}\bigg)\leq \epsilon+ \prod_{j=J/2}^J \Prob\bigg(Z\leq \frac{\sqrt{2C_d \log (C_d J/\epsilon) }}{2^{2^{J/2-2}}}\bigg).
\end{align*}
If there exist $\epsilon_0,\tau_0>0$ such that $\Prob(Z\leq \tau_0)\leq 1-\epsilon_0<1$, the above probability vanishes as $J \to \infty$ (equivalently $\delta \to 0$) followed by $\epsilon \to 0$, proving the lemma. Now we prove this claim. We focus on the case $\mathbb{W}_2$ to illustrate the main proof idea; the other cases follow from minor modifications but with substantial complications in notation. Note that for any $\tau>0$,
\begin{align*}
&\Prob\bigg(\min_{0\leq u\leq 1}\max_{0\leq v\leq 1} \mathbb{W}_{2}([-u,v])\leq \tau\bigg)\\
&\leq \Prob\bigg(\min_{0\leq u\leq 1}\max_{0\leq v_1\leq 1, v_2= 1} \mathbb{W}_{2}([-u,v])\leq \tau\bigg)\\
&\leq \Prob\bigg(-\sup_{0\leq u\leq 1}\abs{\mathbb{W}_2([-u,0])} - \sup_{0\leq u_1\leq 1}\abs{\mathbb{W}_2([(-u_1,0),(0,1)])}\\
&\qquad - \sup_{\substack{0\leq u_2\leq 1 \\0\leq v_1\leq 1}}\abs{\mathbb{W}_2([(0,-u_2),(v_1,0)])}+\sup_{0\leq v_1\leq 1}\abs{\mathbb{W}_2([(0,0),(v_1,1)])}\leq \tau\bigg)\\
&\leq \Prob\big(\abs{N}\leq Y_1+Y_2+Y_3+\tau\big).
\end{align*}
Here $N,Y_1,Y_2,Y_3$ are independent random variables, where $N$ is distributed as standard normal, and $Y_i$'s are i.i.d. copies of $\sup_{0\leq v\leq 1} \abs{\mathbb{W}_2([0,v])}$. The above probability is strictly less than $1$ as $\tau \to 0$, proving the claim. The proof is complete.
\end{proof}

\section*{Acknowledgments}
The authors would like to thank four referees for offering a large number of suggestions that significantly improve the presentation of the article.

\bibliographystyle{amsalpha}
%\bibliography{mybib}
\bibliography{mybib.bib}

\end{document}